 \theoremstyle{definition}
 \theoremstyle{remark}
 \numberwithin{equation}{section}
\newtheorem{theorem}{Theorem}[section]
\newtheorem{lemma}[theorem]{Lemma}
\newtheorem{proposition}{Proposition}
\theoremstyle{definition}
\newtheorem{definition}[theorem]{Definition}
\newtheorem{remark}{Remark}
\DeclareMathOperator{\PP}{\mathbb{P}}
\DeclareMathOperator{\Q}{\mathbb{Q}}
\DeclareMathOperator{\A}{\mathcal{A}}
\DeclareMathOperator{\T}{\mathcal{T}}
\newcommand{\I}{{\mathcal I}}
\newcommand{\lra}{\longrightarrow}
\newcommand{\ra}{\rightarrow}
\newcommand{\cB}{{\mathcal B}}
\newcommand{\cG}{{\mathcal G}}
\newcommand{\cE}{{\mathcal E}}
\newcommand{\cI}{{\mathcal I}}
\newcommand{\cA}{{\mathcal A}}
\newcommand{\cS}{{\mathcal S}}
\newcommand{\cL}{{\mathcal L}}
\newcommand{\cC}{{\mathcal C}}
\newcommand{\cH}{{\mathcal H}}
\newcommand{\cU}{{\mathcal U}}
\newcommand{\cT}{{\mathcal T}}
\newcommand{\cW}{{\mathcal W}}
\renewcommand{\phi}{\varphi}
\renewcommand{\a}{{\alpha}}
\renewcommand{\l}{\lambda}
\newcommand{\Tau}{\mathrm{T}}
\newcommand{\supp}{\mathrm{supp}\;}
\newcommand{\Lip}{\operatorname{Lip}}
\newcommand{\R}{{\mathbb{R}}}
\begin{document}
%-------------------------------------------------------------------------
% editorial commands: to be inserted by the editorial office
%
%\firstpage{1} \volume{228} \Copyrightyear{2004} \DOI{003-0001}
%
%
%\seriesextra{Just an add-on}
%\seriesextraline{This is the Concrete Title of this Book\br H.E. R and S.T.C. W, Eds.}
%
% for journals:
%
%\firstpage{1}
%\issuenumber{1}
%\Volumeandyear{1 (2004)}
%\Copyrightyear{2004}
%\DOI{003-xxxx-y}
%\Signet
%\commby{inhouse}
%\submitted{March 14, 2003}
%\received{March 16, 2000}
%\revised{June 1, 2000}
%\accepted{July 22, 2000}
%
%
%
%---------------------------------------------------------------------------
%Insert here the title, affiliations and abstract:
%

\title[Hybrid control and optimal visiting]
 {Hybrid control for optimal visiting problems for a single player and for a crowd}
%Optimal visiting problems for a crowd via hybrid control techniques

%----------Author 1
\author{Fabio Bagagiolo}
\address{Dipartimento di Matematica \\
Universit\`a di Trento\\
Via Sommarive, 14, 38123 Povo (TN) Italy}
\email{fabio.bagagiolo@unitn.it }

%\thanks{The first author is supported by NSF grant xx-xxxx.}
%----------Author 2
\author{Adriano Festa}
\address{Dipartimento di Scienze Matematiche ``Giuseppe Luigi Lagrange"\\
Politecnico di Torino\\
Corso Duca degli Abruzzi, 24, 10129 Torino Italy}
\email{adriano.festa@polito.it}
%----------Author 3
\author[Luciano Marzufero]{Luciano Marzufero*}

\address{%
Dipartimento di Matematica \\
Universit\`a di Trento\\
Via Sommarive, 14, 38123 Povo (TN) Italy}

\email{luciano.marzufero@unitn.it }

%----------classification, keywords, date
\subjclass{Primary 49L25; Secondary 35Q82; 35Q84}

\keywords{Hybrid systems, optimal control, Hamilton-Jacobi equations, mean-field games.}

\date{\today}
%----------additions
%\dedicatory{To my boss}
%%% ----------------------------------------------------------------------

\begin{abstract}
In an optimal visiting problem, we want to control a trajectory that has to pass as close as possible to a collection of target points or regions. We introduce a hybrid control-based approach for the classic problem where the trajectory can switch between a group of discrete states related to the targets of the problem. The model is subsequently adapted to a mean-field game framework to study viability and crowd fluxes to model a multitude of indistinguishable players. 
\end{abstract}

%%% ----------------------------------------------------------------------
\maketitle
%%% ----------------------------------------------------------------------
%\tableofcontents
\section{Introduction}

In this paper, we deal with the problem of optimizing a trajectory to ``visit'', i.e., to touch or at least to pass as close as possible, to a collection of targets. In the following, we refer to this problem as \emph{optimal visiting}. The issue presents various inherent difficulties: some related to its high computational complexity (shared with other well-known optimization problems as the ``Traveling salesman problem'' \cite{gavish1978travelling}) and other related to its possible continuous/discontinuous nature.
\par
Let us state the problem more precisely:  consider the controlled dynamics
\begin{equation}
\label{dyn}
\begin{cases}
y'(s)=f(y(s),\alpha(s),s),&s\in]t,T]\\
y(t)= x,&x\in\R^d
\end{cases},
\end{equation}
where $t\in[0, T]$, $\alpha:[t,T]\longrightarrow A$ is a measurable control function, and the dynamics $f:\R^d\times A\times[0,+\infty[\longrightarrow \R^d$ is suitably regular. Consider a collection of $N$ compact disjoint target sets in $\R^d$, $\{\T_1,\T_2,\ldots,\T_N\}$, and $y_{(x,t)}(\cdot; \a)$ a solution of \eqref{dyn} related to a starting point $x$, a starting time $t$ and a control $\alpha$. We can write the optimal visiting problem just considering, for example, the minimization of a cost functional of the form, in a finite horizon feature,
$$
J(x,t,\alpha)=\int_t^T\ell(y(s),\alpha(s),s)ds,
$$
with the running cost $\ell$ suitably designed in order to keep trace of the distances from the targets of the trajectory. The visiting cost is then defined as
$$
v(x,t)=\inf_{\alpha\in\A}J(x,t,\alpha).
$$
Actually, the problem requires a particular framework as a standard continuous optimal control setting fails to describe the problem correctly. Let us illustrate this difficulty using the following toy example.
\par\smallskip
Let us consider the $1$-D problem with $A=\R$, $f(x,a)=a$, and $\cT_1=(-\infty,-1]$, $\cT_2=[1,+\infty)$. We focus on designing an optimal visit formulation in the interval $[-1,1]$. 
At first, we consider the easiest running cost design:
$$
\ell(y,a,t):= \frac{1}{2}\left(\sum_{j=1}^N d(y, \T_j)^2+\|a\|^2\right),
$$
which penalizes quadratically the distance from the targets and the norm of the control. It is easy to verify that we have a feedback formula for the optimal control as 
$$
\alpha(x,t)=-\frac{x}{(1-t)^2+1/2},
$$
which means that the trajectory is led to zero, which is the middle point between the two targets. Since we want to model a slightly different problem, i.e., a visit more than a compromise between the distances, we are unsatisfied with this result. 
\par
Therefore, we should include the information about the visit of the targets in a different way in the model, allowing us to focus on a single target, as well as on a subfamily of targets, at once. If, for example, we consider the problem of visiting first target $\T_1$ and then $\T_2$, we can easily observe that we would obtain a different problem just swapping the order of the visit. This is a consequence because no Dynamical Programming Principle would be available for the function $v(x, t)$, since the only information brought by the state-position $x$ does not give information about the already visited targets (see also \cite{bagben}). Hence, at this level, we can not in general characterize the optimal visiting time as a solution of a Hamilton-Jacobi-Bellman equation. Consequently, it is quite challenging to perform a global study of the problem or obtain a feedback optimal control map. 
\par\smallskip
The argument above suggests that we need to include in the model a ``memory" of the targets already visited. The latter can be done using various tools. Here, we opt for a hybrid control based construction. In particular, to each target $\T_j$ we associate a label $p^j\in\{0,1\}$. These labels indicate whether the corresponding target has been already visited $p^j=1$, or not $p^j=0$, and they have a discontinuous evolution in time. We then split the optimal visiting problem into several problems, labeled by the $N$-strings $p\in\{0,1\}^N$, and we suitably interpret it as a collection of several optimal stopping/switching problems coupled to each other by the stopping/switching cost (a switch between $N$-strings corresponds to the visit of a target or the choice to forgo visiting one or more targets). 
\par\smallskip
In the following, we adapt to our framework some classic results of viscosity solutions theory that can be found, e.g.,  in \cite{BCD97, Festa2017127}. In particular, the hybrid framework that we propose is related to hybrid control \cite{Branicky199831} and somehow to 
the mathematical switching hysteresis models \cite{Visintin20061}. The need of memory feature, associated with the optimal visiting and dynamic programming and Hamilton-Jacobi equations has been presented in \cite{bagben}, where a continuous hysteresis memory was introduced. The use of a switching/discontinuous/hybrid memory, as in the present paper, was instead used for a one-dimensional optimal visiting problem on a network in \cite{bagfagmagpes}. For switching hybrid control problems, as well as for differential games, related to the model here presented, and in connection with Hamilton-Jacobi equations, we refer to \cite{bagdan} and to \cite{bagmagzop}. A more general discussion is done in \cite{bensoussan1997hybrid} (similar formulations for the deterministic case have also been proposed in \cite{Branicky199831, dharmatti2005hybrid}). 
\par\smallskip
The literature concerning the Traveling Salesman Problem, to which our optimal visiting problem is related, is very large. We only quote an early paper by R. Bellman \cite{bellman1962dynamic} devoted to the problem and dynamic programming.
\par\smallskip
In this paper, we are also interested in the case where a huge population of agents plays the optimal visiting problem with controlled dynamics and costs also depending on the distribution of the population. Actually, the study of the interacting motion of many agents with more than one target seems to be rather new in the literature, especially for what concerns the corresponding continuity equation for the mass distribution. In this work, we start such a kind of study and we furnish some numerical promising results and the analysis for some continuity equations with a mass-sink (corresponding to the case where some agents, possibly labeled by the same $N$-strings $p$, forgo visiting some targets and then pass to another level, labeled by another $N$-string $p'$).
\par
The model for a crowd of indistinguishable players is taken from the framework of mean-field games \cite{ll3,Huang1,gomessurvey13,cardaliaguet2015mean}, while the adaptation of the same structure to hybrid processes has been only very recently attempted. Some related works are \cite{bertucci2018optimal, bertucci2020}, where the author discusses a mean-field optimal stopping problem and \cite{festa2018mean}, where a hybrid mean-field game is presented to model a multi-lane traffic flux of vehicles. 
\par
As it is mentioned later in the paper, some complementary results, related to a single player problem, are referred to \cite{BFMProc1}, while in \cite{BFMProc2} the same framework is used to solve a series of applied problem arising from the sport of orienteering races.
\par\smallskip
The article is organized as follows. In Section \ref{s-single}, we introduce the optimal visiting problem of a single agent, shortly reporting all the theoretical elements that justify the use of a Hamilton-Jacobi formulation. A numerical scheme to approximate the solution is described in Section \ref{s-test1}, with a test to verify in practice the model. In Section \ref{s-crowd}, which contains the main results of the paper,  we consider a crowd of indistinguishable players focusing on the good position of the continuity equation that models the motion of the density of players. Then, in Section \ref{s-test2}, we introduce a numerical approximation for the continuity equation and we examine the model in action through a collection of simple tests.

\section{The optimal visiting problem}
\label{s-single}

We are given of a collection of $N$ disjoint compact subsets $\{\cT_j\}_{j=1,\ldots,N}\subset\R^d$ and we represent the state of the system by $(x, p)\in\R^d\times\I$, where $p=(p^1, p^2,\ldots, p^N)\in\cI=\{0,1\}^N$. Hence $x$ is the continuous state variable and $p$ is the switching discrete state variable. The evolution of the continuous variable is described by the controlled dynamics
\begin{equation} \label{eq_stato}
\begin{cases}
 y'(s)=f(y(s),\alpha(s), q(s)),\ s\in ]t,T]\\
 y(t)= x,\ q(t)=p
\end{cases},
\end{equation}
where $(x, p)\in\R^d\times\cI$ is the initial state, $t\in[0, T]$ the initial instant, $T>0$ the finite horizon. The measurable control is (for $A\subset\mathbb{R}^m$ compact)
$$
\alpha\in{\cA}:=\left\{\alpha:[0,+\infty[\lra A\ \mbox{measurable}\right\},
$$
and the dynamics $q(\cdot)$ of the switching variable is subject to
\begin{equation}
\label{leggeq}
\exists\tau\in[t, s], \ y(\tau)\in{\T_j}\Rightarrow\ q^j(s)=1;\ \ q^j(s)=p^j\ \text{otherwise}.
\end{equation}
In particular, $q^j(s)=0$ means that the target $\cT_j$ has not been visited yet in $[t, s]$ and viceversa for $q^j(s)=1$. The dynamics $f :\R^d\times A \times \I\longrightarrow \R^d$ is continuous,  bounded and Lipschitz continuous w.r.t. $x\in\R^d$ uniformly w.r.t. $(a,p)\in A\times\cI$, i.e., there exists $L>0$ such that
$$
\|f(x,a,q)-f(y,a,q)\|\le L\|x-y\| \ \ \text{for all}\ (x,y)\in\R^d\ \text{and} \ (a,q)\in A\times\cI.
$$
The state of the system at time $s$ is the pair $(y(s), q(s))$ and, for every initial state $(x,t,p)$ and control $\alpha$, by our hypotheses the existence of a unique solution $(y(s), q(s))$ of \eqref{eq_stato}-\eqref{leggeq} is guaranteed. In particular, note that the number of switches of the variable $q$ is necessarily finite. 
\par\smallskip
The optimal visiting problem consists then in reaching the discrete state $\bar p=(1,1,\dots,1)$ (i.e., to visit all the targets) at a time $t\leq \bar t\leq T$, minimizing the following cost
$$
\int_t^{\bar t}e^{-\l(s-t)}\ell(y(s), \a(s), q(s), s)ds,
$$
for a given running cost $\ell$ and a discount factor $\l>0$. 
\subsection{A hybrid-control relaxation of the problem: optimal switching}
\label{hybconrel}
The optimal control problem described above requires to ``exactly touch" all the targets. This makes the evolution of the discrete variable $q$ rather complicated, in particular in view of the corresponding Hamilton-Jacobi equation. We then relax the problem asking instead for ``to pass as close as possible" to each target. Then we assume that we can definitely get rid of some targets and take into account only the remaining ones. In doing that, we also pay an additional cost depending, for instance, on the actual distance from the discarded targets. In this way, the evolution $q(\cdot)$ of the discrete variables is no more a solution of \eqref{eq_stato}-\eqref{leggeq}, but instead, it becomes a control at our disposal. Obviously, there are some constraints: for example, for $N=3$, if $p=(1, 0 ,0)$, $p'=(1, 0, 1)$, $p''=(0, 1, 1)$ and $p'''=\bar p=(1, 1, 1)$, then from $p$ we can not switch to $p''$ otherwise we lose the information about the already visited/discarded target $\cT_1$. However, we can switch to $p'''$ directly.
The process above is sketched in the Figure \ref{switch}. In particular, by an optimization criterium, such a process is feasible because at every switching instant we get rid of a maximal quantity of targets, and hence no infinitesimal accumulation of subsequent switches is possible (no Zeno phenomenon). See also Figure \ref{switch2}.

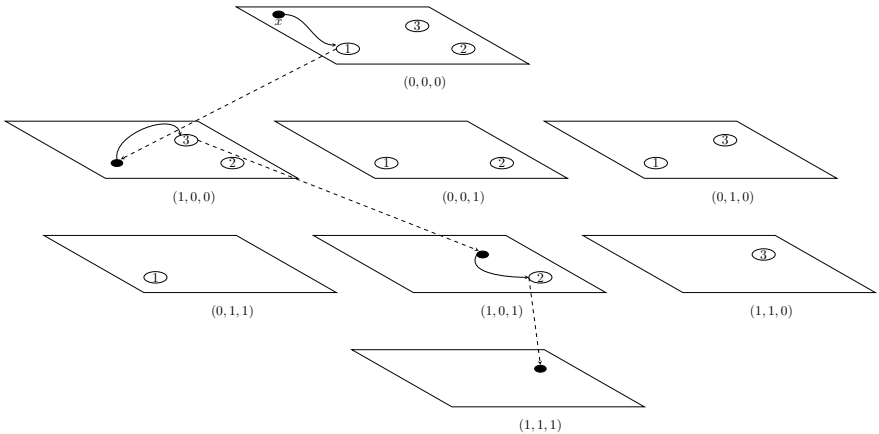
\begin{figure}[t]
\centering
\resizebox{!}{0.49\textwidth}{
\begin{tikzpicture}
%PARALLELOGRAMS%
%(1,1,1)
\draw (0,0)--(5,0)--(5-2.598,1.5)--(0-2.598,1.5)--cycle;
%(1,0,1)
\draw (0-1,0+3)--(5-1,0+3)--(5-2.598-1,1.5+3)--(0-1-2.598,1.5+3)--cycle;
%(0,1,1)
\draw (0-1-7,0+3)--(5-1-7,0+3)--(5-2.598-7-1,1.5+3)--(0-1-2.598-7,1.5+3)--cycle;
%(1,1,0)
\draw (0-1+7,0+3)--(5-1+7,0+3)--(5-2.598+7-1,1.5+3)--(0-1-2.598+7,1.5+3)--cycle;
%(0,0,1)
\draw (0-1-1,0+3+3)--(5-1-1,0+3+3)--(5-2.598-1-1,1.5+3+3)--(0-1-2.598-1,1.5+3+3)--cycle;
%(1,0,0)
\draw (0-1-7-1,0+3+3)--(5-1-7-1,0+3+3)--(5-2.598-7-1-1,1.5+3+3)--(0-1-2.598-7-1,1.5+3+3)--cycle;
%(0,1,0)
\draw (0-1+7-1,0+3+3)--(5-1+7-1,0+3+3)--(5-2.598+7-1-1,1.5+3+3)--(0-1-2.598+7-1,1.5+3+3)--cycle;
%(0,0,0)%
\draw (0-1-1-1,0+3+3+3)--(5-1-1-1,0+3+3+3)--(5-2.598-1-1-1,1.5+3+3+3)--(0-1-2.598-1-1,1.5+3+3+3)--cycle;
%%

%(_,_,_)%
\node at (2.3, -0.5){$(1,1,1)$};
\node at (2.3-1, -0.5+3){$(1,0,1)$};
\node at (2.3-1-7, -0.5+3){$(0,1,1)$};
\node at (2.3-1+7, -0.5+3){$(1,1,0)$};
\node at (2.3-1-1, -0.5+3+3){$(0,0,1)$};
\node at (2.3-1-7-1,-0.5+3+3){$(1,0,0)$};
\node at (2.3-1+7-1, -0.5+3+3){$(0,1,0)$};
\node at (2.3-1-1-1, -0.5+3+3+3){$(0,0,0)$};
%%

%NODES 1 2 3%
\draw (0-1-1-1+0.3,0+3+3+3+0.4) ellipse (0.3 cm and 0.15 cm);
\node at  (0-1-1-1+0.3,0+3+3+3+0.4){$1$};
\draw (0-1-1+0.3,0+3+3+0.4) ellipse (0.3 cm and 0.15 cm);
\node at  (0-1-1+0.3,0+3+3+0.4){$1$};
\draw (0-1-1+7+0.3,0+3+3+0.4) ellipse (0.3 cm and 0.15 cm);
\node at  (0-1-1+7+0.3,0+3+3+0.4){$1$};
\draw (0-1-7+0.3,0+3+0.4) ellipse (0.3 cm and 0.15 cm);
\node at  (0-1-7+0.3,0+3+0.4){$1$};

\draw (0-1-1-1+0.3+3,0+3+3+3+0.4) ellipse (0.3 cm and 0.15 cm);
\node at  (0-1-1-1+0.3+3,0+3+3+3+0.4){$2$};
\draw (0-1-1+0.3+3,0+3+3+0.4) ellipse (0.3 cm and 0.15 cm);
\node at  (0-1-1+0.3+3,0+3+3+0.4){$2$};
\draw (0-1+0.3+3,0+3+0.4) ellipse (0.3 cm and 0.15 cm);
\node at  (0-1+0.3+3,0+3+0.4){$2$};
\draw (0-1-1-7+0.3+3,0+3+3+0.4) ellipse (0.3 cm and 0.15 cm);
\node at  (0-1-1-7+0.3+3,0+3+3+0.4){$2$};

\draw (0-1-1-1+0.3+1.8,0+3+3+3+1) ellipse (0.3 cm and 0.15 cm);
\node at  (0-1-1-1+0.3+1.8,0+3+3+3+1){$3$};
\draw (0-1-1-7+0.3+1.8,0+3+3+1) ellipse (0.3 cm and 0.15 cm);
\node at  (0-1-1-7+0.3+1.8,0+3+3+1){$3$};
\draw (0-1-1+7+0.3+1.8,0+3+3+1) ellipse (0.3 cm and 0.15 cm);
\node at  (0-1-1+7+0.3+1.8,0+3+3+1){$3$};
\draw (0-1+7+0.3+1.8,0+3+1) ellipse (0.3 cm and 0.15 cm);
\node at  (0-1+7+0.3+1.8,0+3+1){$3$};
\filldraw (0-1-1-1-1.5,0+3+3+3+1.3) ellipse (0.15 cm and 0.09 cm)node[anchor=north]{$x$};
\filldraw (0-1-1+0.3-7,0+3+3+0.4) ellipse (0.15 cm and 0.09 cm);
\filldraw (0-1-1+0.3+2.5,0+3+1) ellipse (0.15 cm and 0.09 cm);
\filldraw (0-1-1+0.3+4,0+1) ellipse (0.15 cm and 0.09 cm);

%%DASHED LINES %%%
\draw[dashed,-stealth] (0-1-1-1,0+3+3+3+0.4)--(0-1-1+0.3-7+0.1,0+3+3+0.4+0.1) ;
\draw[dashed,-stealth](0-1-1-7+0.3+1.8+0.3,0+3+3+1)-- (0-1-1+0.3-0.1+2.5,0+3+1+0.1);
\draw[dashed,-stealth] (0-1+3,0+3+0.4)-- (0-1-1+0.3+4,0+1+0.1);
%%%

%%CURVE LINES%%
\draw[-stealth] (0-1-1+0.3-0.15+2.5,0+3+1)  to[out=-120,in=180]  (0-1+3,0+3+0.4);
\draw[-stealth] (0-1-1-1-1.5+0.15,0+3+3+3+1.3) to[out=0,in=180]   (0-1-1-1,0+3+3+3+0.4+0.1);
\draw[-stealth] (0-1-1+0.3-7,0+3+3+0.4+0.1)to[out=100,in=100](0-1-1-7+0.3+1.8-0.15,0+3+3+1+0.11);
\end{tikzpicture}}
\caption{An optimal visiting problem with three targets: $1$, $2$, $3$. The initial state is $(x, (0, 0, 0))$: no target visited/discarded yet. The agent first visits/discards target 1 and then the label switches to $(1, 0, 0)$. The second visited/discarded target is 3, and hence the second switch is to $(1, 0, 1)$. After visiting/discarding target 2, the final switch is to $(1, 1, 1)$. The rectangular indicates $\R^d$ and, for every label, the corresponding already visited/discarded targets are not displayed.}\label{switch}
\end{figure}
%\begin{figure}[t]
%\begin{center}
%\begin{tabular}{c}
%\includegraphics[width=0.8\textwidth]{./figures/switching_disegno.pdf}
%\end{tabular}
%\end{center}
%\caption{An optimal visiting problem with three targets: $1$; $2$; $3$. The initial state is $(x; (0; 0; 0))$: no targets visited yet. The agent  first visits target 1 and then the label switches to $(1; 0; 0)$. The second visited target is 3, and hence the second switch is to $(1; 0; 1)$. After visiting target 2, the final switch is to $(1; 1; 1)$. The rectangular indicates $\R^d$ and, for every label, the corresponding already visited targets are not displayed.}\label{switch}
%\end{figure}
\smallskip

\par
Therefore, for any $p$, we denote by $\cI_p$ the set of all possible new variables in $\cI$ after a switch from $p$:
\begin{multline*}
\cI_p=\{\tilde p\in\cI:p^i=1 \Rightarrow \tilde p^i=1\text{ and}\ \exists l=1,\ldots,N:p^l=0, \; \tilde p^l=1\}.
\end{multline*}
Note that in particular $\cI_{\bar p}=\emptyset$, where $\bar p=(1, 1, \ldots1)$. 
\par\smallskip
For a given $p$, the number of the admissible subsequent switches is at most $N-\sum_{i}p^i\leq N$. Given the state $(x, p)$ at the time $t$ with $p\neq\bar p$, the controller chooses: the measurable control $\a\in\cA$, and the discrete one $q:[0, +\infty[\lra\cI$ which contains: the number $1\leq m\leq N-\sum_{i}p^i$ of switches to be performed in order to reach $\bar p$, the switching instants $t\leq t_1<t_2<\ldots<t_m\leq T$ and the switching destinations $p_1,\ldots,p_{m-1}$, $p_m=\bar p$. Such destinations must satisfy $p_1\in\cI_p$, $p_{i+1}\in\cI_{p_i}$, $i=1,\ldots,m-1$. To resume, the control at disposal is then
$$
(\a, m, t_1,\ldots,t_m, p_1,\ldots,p_{m-1})=:u,
$$
and note that for any $(x, p, t)$ as above such a string belongs to a set depending on $p$ and $t$ denoted by $\cU_{(p, t)}$. The cost to be minimized is
\begin{multline*}
J(x, t, p, u)=\sum_{j=1}^m\Bigg(\int_{t_{j-1}}^{t_j}e^{-\l(s-t)}\ell(y(s), \a(s), p_{j-1}, s)ds\\
+e^{-\l(t_j-t)}C(y(t_j), p_{j-1}, p_j)\Bigg),
\end{multline*}
with $\l\ge0$, $p_0=p$, $t_0=t$ and $y(s)$ is the solution of \eqref{eq_stato} where $q(s)=p_{j-1}$ if $s\in[t_{j-1}, t_j]$. 
\par
We assume $\ell:\R^d\times A\times\cI\times[0, T]\lra[0, +\infty[$ bounded, continuous and uniformly continuous w.r.t. $x$ uniformly w.r.t. $a\in A$, $p\in\cI$ and $t\in[0, T]$.
Moreover $C:\R^d\times\cI\times\cI\lra[0, +\infty[$ is uniformly continuous w.r.t. $x\in\R^d$, uniformly w.r.t. $p, p'\in\cI\times\cI_p$. Note that $C(x, p, p')$ represents the switching cost from $p$ to $p'$ when the state position is $x\in\R^d$. For example, it may depend on the distance from the discarded targets, that is $C(x, p, p')=\sum_j\chi_j(p, p')d(x, \cT_j)$, where
$$
\chi_j(p, p')=\begin{cases}0,&p^j=p'^j\\
1,&\text{otherwise}
\end{cases}.
$$
The value function of the problem is
\begin{equation}
\label{eq:V-switching}
V(x, t, p)=\inf_{u\in \cU_{(p, t)}}J(x, t, p, u).
\end{equation}

\subsection{Another possible interpretation: a family of optimal stopping problems}
Our aim is to make the optimal switching problem of the previous subsection more prone to be solved by an algorithmic procedure using Hamilton-Jacobi type problems. We then introduce a possible formulation as a family of time-dependent optimal stopping subproblems, one per every switching variable $p$, suitably coupled by the stopping costs. To do this, we proceed with a backward construction, whose details are explained in \cite{BFMProc1}. Anyway, we give here a short exposure. We start from the last switching variable, that is, the variable $p$ such that $\sum_{i}p^i=N-1$ (i.e., from $p$ we can switch only to $\bar p$). At every step, we get a time-dependent optimal stopping problem in the state space $\R^d$ where, for a given $(x, t)$, the admissible controls are the triples $u=(\a\in\cA, \tau\in[t,T], p'\in\cI_p)$ and the cost to be minimized is
\begin{multline*}
J_p(x, t, \a, \tau, p')=\int_t^{\tau}e^{-\l(s-t)}\ell(y(s), \a(s), p, s)ds
\\
+e^{-\l(\tau-t)}\Big(C(y(\tau), p, p')+V_{p'}(y(\tau), \tau)\Big).
\end{multline*}
The value function is
\begin{equation}
\label{funzionivaloritimep}
V_p(x, t)=\inf_{(\a, \tau, p')}J_p(x, t, \a, \tau, p').
\end{equation}
Note that, if $p$, for instance, is such that $\sum_{i}p^i=N-2$, then from $p'$ we can only switch to the final state $\bar p$, and hence $V_{p'}$ can be a priori evaluated as in the previous step of the procedure. Since when $p=\bar p$, the game stops, we set $V_{\bar p}\equiv0$. Therefore, proceeding backwardly in this way, we can at least formally compute the value functions $V_p$ for any $p\in\cI$. 

\subsection{Equivalence of the two models and characterization of the value function}
We show the equivalence between the optimal switching problem and the family of the optimal stopping ones, i.e.,  $V(x, t, p)=V_p(x, t)$ for every $(x, t, p)\in\R^d\times[0, T]\times\cI$. Here, and in the sequel, $V$ is the value function defined in \eqref{eq:V-switching} and $V_p$ is the value function defined backwardly as in \eqref{funzionivaloritimep}. For the proofs of all the results in this section, we refer to \cite{BFMProc1}. 
\begin{proposition}
\label{dynprog}
Under the hypotheses of \S\ref{hybconrel}, we have that
\begin{itemize}
\item[$(i)$]$V$ and $V_p$ are bounded and uniformly continuous for every $p\in\cI$;
\item[$(ii)$] for every $x\in\mathbb{R}^d$, $t\in[0, T]$ and $p\in\cI$,
\begin{multline*}
V(x, t, p)=\inf_{(\a, \tau, p'\in\cI_p)}\Bigg(\int_t^{\tau}e^{-\l(s-t)}\ell(y(s), \a(s), p, s)ds\\
+e^{-\l(\tau-t)}\Big(C(y(\tau), p, p')+V_{p'}(y(\tau), \tau)\Big)\Bigg).
\end{multline*}
\end{itemize}
As a consequence, $V(x,t,p)=V_p(x,t)$ for all $(x,t,p)$.
\end{proposition}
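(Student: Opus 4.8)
The plan is to prove the three items essentially together, because $(iii)$ — the identity $V=V_p$ — is the easy consequence of $(ii)$ once $(i)$ is in place, and $(ii)$ is itself a dynamic programming principle that isolates the first switch from the rest of the control. I would organize the argument around the finite-horizon induction on the ``depth'' $N-\sum_i p^i$, i.e.\ the maximal number of admissible switches still available from $p$.

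\emph{Step 1 (base case).} When $p=\bar p$ we have $\cI_{\bar p}=\emptyset$, no control is admissible beyond stopping, so $V(x,t,\bar p)=V_{\bar p}(x,t)=0$; boundedness and uniform continuity are trivial. When $\sum_i p^i=N-1$ the only admissible destination is $\bar p$, the switching control string $u$ reduces to a single triple $(\a,\tau,\bar p)$, and by construction $J(x,t,p,u)$ coincides verbatim with $J_p(x,t,\a,\tau,\bar p)$ using $V_{\bar p}\equiv 0$; hence $V(x,t,p)=V_p(x,t)$ and this is the value of a standard finite-horizon optimal stopping problem with bounded continuous running cost $\ell$, bounded Lipschitz dynamics $f$, and uniformly continuous stopping cost $C(\cdot,p,\bar p)$. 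Boundedness of $V_p$ follows from $0\le\ell\le M$ and $0\le C$ bounded, giving $0\le V_p\le (M/\l)(1-e^{-\l T})+\sup C$ (or $MT+\sup C$ when $\l=0$); uniform continuity in $(x,t)$ follows from the Lipschitz-in-$x$ dependence of the flow $y_{(x,t)}(\cdot;\a)$ via Gr\"onwall, the uniform continuity moduli of $\ell$ and $C$, and the standard ``$\inf$ of uniformly equicontinuous family is uniformly continuous'' argument, together with the elementary estimate on $\tau\mapsto e^{-\l(\tau-t)}$ to absorb the $t$-dependence.

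\emph{Step 2 (inductive step and the DPP $(ii)$).} Assume $(i)$ holds for all $p'$ with $\sum_i p'^i>\sum_i p^i$; in particular every $V_{p'}$ with $p'\in\cI_p$ is already a bounded uniformly continuous function. I would prove the two inequalities in $(ii)$ by the usual splitting-of-controls technique. For ``$\le$'': given a near-optimal $u=(\a,m,t_1,\dots,t_m,p_1,\dots,p_{m-1},p_m=\bar p)\in\cU_{(p,t)}$ for $V(x,t,p)$, the tail $(\a\vert_{[t_1,\cdot)},m-1,t_2,\dots,t_m,p_2,\dots)$ is an admissible control from state $(y(t_1),p_1)$ at time $t_1$, so by definition of $V$ its cost dominates $V(y(t_1),t_1,p_1)$, which by the inductive hypothesis equals $V_{p_1}(y(t_1),t_1)$; a change of variables in the discount factor then shows $J(x,t,p,u)$ is bounded below by the bracketed expression evaluated at $(\a,t_1,p_1)$, hence so is its infimum. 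For ``$\ge$'': given near-optimal $(\a,\tau,p')$ for the right-hand side and a near-optimal control for $V_{p'}(y(\tau),\tau)=V(y(\tau),\tau,p')$, concatenate them (using measurability of the glued control and the flow/semigroup property of \eqref{eq_stato}) into an admissible $u\in\cU_{(p,t)}$ whose cost is within $2\epsilon$ of the right-hand side; let $\epsilon\to 0$. Once $(ii)$ is established, comparing it with the defining formula \eqref{funzionivaloritimep} for $V_p$ and invoking $V_{p'}=V$ on the (already settled) deeper levels shows $V(x,t,p)=V_p(x,t)$, and then boundedness/uniform continuity of $V(\cdot,\cdot,p)$ is inherited from that of $V_p$, which in turn follows exactly as in Step 1 because the stopping cost $C(\cdot,p,p')+V_{p'}(\cdot,\cdot)$ is bounded and uniformly continuous by the inductive hypothesis — closing the induction.

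\emph{Main obstacle.} The routine part is all the Gr\"onwall/modulus-of-continuity bookkeeping; the genuinely delicate point is the ``$\ge$'' direction of the DPP, specifically checking that the concatenated control is admissible — i.e.\ that the glued measurable $\a$ together with the chained switching destinations still lies in $\cU_{(p,t)}$ (the constraints $p_1\in\cI_p$, $p_{i+1}\in\cI_{p_i}$ must be preserved across the join) — and that the truncation/restriction of an $\epsilon$-optimal tail control does not lose optimality, which uses the time-measurability of $\cA$ and the finite, uniformly bounded number of switches. A secondary subtlety is the uniformity of the continuity modulus: since it must not degrade as one ascends the induction over the (finitely many) strings $p$, one should carry along an explicit bound — finitely many levels make this harmless, but it is worth stating. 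Because the paper defers full details to \cite{BFMProc1}, I would present the above as a guided sketch rather than a verbatim proof.
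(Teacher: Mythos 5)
The paper itself gives no proof of this proposition --- it defers entirely to \cite{BFMProc1} --- but your backward induction on the depth $N-\sum_i p^i$, with the dynamic programming principle $(ii)$ proved by splitting a near-optimal switching control at its first switching time and, conversely, concatenating a near-optimal $(\a,\tau,p')$ with a near-optimal deeper-level control, is exactly the standard route for such switching/iterated-stopping equivalences, and your sketch is sound. Two small corrections: first, your inequality labels are swapped --- the splitting argument (bounding $J(x,t,p,u)$ from below by the bracketed expression at $(\a,t_1,p_1)$) proves $V(x,t,p)\ge$ the infimum on the right-hand side, while the concatenation argument proves $V(x,t,p)\le$ it; the content of each paragraph is correct for the opposite sign. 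Second, your inductive hypothesis must explicitly carry the identity $V(\cdot,\cdot,p')=V_{p'}$ at all deeper levels, not only item $(i)$, since both halves of your DPP argument invoke that identity; since there are finitely many levels this costs nothing, and with it the induction closes as you describe.
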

\par\smallskip
Now, we are able to obtain a differential characterization of the value function $V_p$ as viscosity solution of an Hamilton-Jacobi-Bellman variational inequality.
\par
In the sequel, by $(\cdot)_t$ and $D_x$ we denote the time derivative and the spatial gradient.
\begin{theorem}
Under the hypotheses of Proposition \ref{dynprog}, for every $p\in\cI$ the value function $V_p$ is the unique bounded and uniformly continuous viscosity solution $U$ of
\begin{equation}
\label{V}
\begin{cases}
\max\{U(x, t)-\psi_p(x, t),-U_t(x, t)+\l U(x, t)+H^p(x, t, D_xU(x, t))\}=0,\\ \mbox{\hspace{8.2cm}$(x, t)\in\mathbb{R}^d\times[0, T[$}\\
U(x, T)=\psi_p(x, T), \mbox{\hspace{6.8cm}$x\in\mathbb{R}^d$},
\end{cases}
\end{equation}
where
\begin{equation}\label{ham}
H^p(x, t, \xi)=\sup_{a\in A}\{-f(x, a, p)\cdot \xi -\ell(x, a, p, t)\}
\end{equation}
is the $p$-labeled Hamiltonian and 
\begin{equation}\label{swi}
\psi_p(x, t):=\inf_{p'\in\cI_p}(C(x, p, p')+V_{p'}(x, t))
\end{equation}
is the $p$-labeled switching operator. Moreover, the family of functions $\{V_p:p\in\cI\}$ is the unique family of bounded and uniformly continuous functions $\{U_p:p\in\cI\}$ that solves the problem
$$
\left\{
\begin{array}{ll}
\displaystyle
\text{for any $p\in\cI$, $U_p$ is the unique viscosity solution of \eqref{V}} \\
\displaystyle
\text{with }\psi_p\ \mbox{replaced by }\psi_p^U(x, t):=\inf_{p'\in\cI}(C(x, p, p')+U_{p'}(x, t)),\\
\displaystyle
U_{\bar p}=0
\end{array}.
\right.
$$
\end{theorem}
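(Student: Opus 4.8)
The statement is proved in three steps, following the classical route for obstacle problems (the full details are in \cite{BFMProc1}; we sketch the structure). First, one shows that the value function $V_p$ defined in \eqref{funzionivaloritimep} is a bounded, uniformly continuous viscosity solution of \eqref{V}, using the dynamic programming identity of Proposition~\ref{dynprog}. Second, one establishes a comparison principle for \eqref{V} with the obstacle $\psi_p$ regarded as a \emph{fixed} datum, which yields uniqueness of the scalar equation. Third, one deduces the characterization of the whole family $\{V_p\}$ as the unique solution of the coupled system by a backward induction on the number of not-yet-visited targets.

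\textbf{Step 1: $V_p$ solves \eqref{V}.} Boundedness and uniform continuity of $V_p$ are Proposition~\ref{dynprog}$(i)$; since $\cI_p$ is finite and $C$ as well as each $V_{p'}$, $p'\in\cI_p$, are bounded and uniformly continuous, the same holds for $\psi_p$ defined in \eqref{swi}. Choosing in Proposition~\ref{dynprog}$(ii)$ the immediate stop $\tau=t$ together with an optimal $p'$ gives $V_p\le\psi_p$ on $\R^d\times[0,T]$, and at $t=T$ the only admissible choice is $\tau=T$, so $V_p(\cdot,T)=\psi_p(\cdot,T)$. For a smooth $\varphi$ touching $V_p$ from above at an interior point $(x_0,t_0)$ with $V_p(x_0,t_0)<\psi_p(x_0,t_0)$, applying the dynamic programming identity with controls that postpone the stop ($\tau>t_0$) and letting the horizon shrink yields $-\varphi_t+\l\varphi+H^p(x_0,t_0,D_x\varphi)\le0$; combined with $V_p\le\psi_p$ this is the subsolution inequality. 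For the supersolution inequality, at points where $V_p=\psi_p$ the first argument of the $\max$ vanishes, while at points where $V_p<\psi_p$ an immediate stop is not optimal and the reverse use of the dynamic programming identity produces $-\varphi_t+\l\varphi+H^p\ge0$ for $\varphi$ touching from below. Hence $V_p$ is a viscosity solution of \eqref{V}.

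\textbf{Step 2: comparison for fixed obstacle; this is the crux.} Because $f$ is bounded and Lipschitz in $x$ uniformly in $(a,q)$ and $\ell$ is bounded and uniformly continuous in $x$ uniformly in $(a,q,t)$, the Hamiltonian \eqref{ham} satisfies the standard structure conditions
\[
|H^p(x,t,\xi)-H^p(y,t,\xi)|\le L\|x-y\|\,\|\xi\|+\omega_\ell(\|x-y\|),\qquad |H^p(x,t,\xi)-H^p(x,t,\eta)|\le \|f\|_\infty\|\xi-\eta\|,
\]
with $\omega_\ell$ a modulus of continuity of $\ell$. With $\psi_p$ fixed, bounded and uniformly continuous, the doubling-of-variables argument for variational inequalities (see, e.g., \cite{BCD97}) applies: given a bounded u.c.\ subsolution $U$ and a bounded u.c.\ supersolution $W$ of \eqref{V} with $U(\cdot,T)\le W(\cdot,T)$, assume $\sup(U-W)>0$, penalize $U(x,t)-W(y,s)$ by $\tfrac{1}{2\ep}|x-y|^2+\tfrac{1}{2\eta}|t-s|^2$ plus a term controlling the terminal slice, and at the penalized maximum split according to the $\max$. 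In the obstacle alternative one uses $U\le\psi_p$ at the first point and $W\ge\psi_p$ at the second, together with uniform continuity of $\psi_p$; in the PDE alternative one combines the subsolution PDE inequality for $U$, the (PDE-or-obstacle) supersolution property of $W$, and the structure conditions on $H^p$. In either case one reaches a contradiction as $\ep,\eta\to0$ (and, if $\l=0$, after the usual exponential change of variables $\widehat U=e^{\mu(T-t)}U$ to create coercivity on the finite horizon). Thus $U\le W$; exchanging roles gives uniqueness, and since $V_p$ is a solution by Step~1, it is the unique one. Making this argument close simultaneously across the two branches of the $\max$, and handling the boundary layer at $t=T$, is the main technical point; Steps~1 and 3 are routine.

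\textbf{Step 3: the coupled system.} Put $k(p):=N-\sum_i p^i\in\{0,\dots,N\}$. By definition of $\cI_p$, every $p'\in\cI_p$ has $\sum_i p'^i>\sum_i p^i$, hence $k(p')<k(p)$, so we may argue by backward induction on $k(p)$. If $k(p)=0$ then $p=\bar p$ and $U_{\bar p}=0=V_{\bar p}$ by the constraint. Assuming $U_{p'}=V_{p'}$ for all $p'$ with $k(p')<k(p)$, the switching operator built from $\{U_{p'}\}$ satisfies
\[
\psi_p^U(x,t)=\inf_{p'\in\cI_p}\big(C(x,p,p')+U_{p'}(x,t)\big)=\inf_{p'\in\cI_p}\big(C(x,p,p')+V_{p'}(x,t)\big)=\psi_p(x,t),
\]
so $U_p$ is a bounded u.c.\ viscosity solution of \eqref{V} with obstacle $\psi_p$, and Step~2 forces $U_p=V_p$, closing the induction. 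Conversely, the family $\{V_p\}$ itself solves the coupled problem, since each $V_p$ solves \eqref{V} by Steps~1--2, the obstacle $\psi_p$ in \eqref{swi} equals $\psi_p^U$ evaluated at $U_{p'}=V_{p'}$, and $V_{\bar p}\equiv0$. Hence $\{V_p:p\in\cI\}$ is the unique such family, which completes the proof.
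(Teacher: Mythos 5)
Your proposal is essentially correct and follows the canonical route for this kind of result; note that the paper itself gives no proof here but defers to the companion reference \cite{BFMProc1}, whose argument is of exactly this form (DPP $\Rightarrow$ viscosity solution of the quasi-variational inequality, comparison for the obstacle problem with frozen obstacle, then backward induction on the number of unvisited targets using the triangular structure of $\cI_p$). One small imprecision in Step~1: for the subsolution property you must verify $-\varphi_t+\l V_p+H^p\le 0$ at \emph{every} interior point where $\varphi$ touches $V_p$ from above, including those where $V_p=\psi_p$ (the $\max\le 0$ condition requires both branches to be nonpositive there); your restriction to $V_p<\psi_p$ is unnecessary, and the same "postpone the stop" use of the dynamic programming identity works without it.
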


\section{Numerical testing} \label{s-test1}

In this section, we briefly describe a technique of approximation for the equation  \eqref{V}. This will be useful to produce some tests to illustrate the framework and show in practice the advantages of the method. Moreover, the same approximation scheme will be used in the mean-field  game framework.

\subsection{A numerical approximation of a variational Hamilton-Jacobi inequality}\label{s:scheme1}

We describe a numerical approximation for \eqref{V}. Let us consider a discrete grid of nodes $(x_{i},p)$ in the state space that, for simplicity, we consider as $[-c,c]^2\times \cI$, with $c\in\R_+$. Thus, a general $i=(i_1,i_2)$, $x_{i}=(-c+i_1 \Delta x , -c+i_2\Delta x)$. We proceed to define also a discrete time space as $t_k:=k\Delta t\in[0,T]$.
\par
Following \cite{MR3328207}, we write an approximation scheme for  \eqref{V} at $(x_{i},p)$ in a backward in time explicit form as
\begin{equation}\label{Scheme1}
V(x_{i},t_{k-1},p) = \min \left( NV(x_{i},t_{k},p), \Sigma\left(x_{i},t_k,p,V\right)\right).
\end{equation}
In \eqref{Scheme1}, the numerical operator $\Sigma$ is related to the continuous control, or, in other terms, to the approximation of the Hamiltonian function \eqref{ham}. 
\par
We approximate the Hamiltonian part using a semi-Lagrangian approach (see, i.e., \cite{camilli1995approximation}). The main advantage of such an approach is that we build a monotone scheme unconditional stable to the grid parameters justifying the choice of an explicit-in-time scheme. A standard semi-Lagrangian discretization of the Hamiltonian is given by
\begin{multline}\label{Scheme2}
\Sigma\left(x_{i},t_k,p,V\right)=  \lambda\Delta t V(x_{i},t_k,p)+\min_{a\in A}\left\{\mathbb{I}\left[V\right] (x_{i}-\Delta t \> f(x_{i},a,p), t_{k},p)\right.\\\left.+\Delta t \,\ell(x_{i},a,p,t_k)  \right\},
\end{multline}
where we denote by $\mathbb{I}\left[V^\Delta\right] (x,t,p)$ the values of the discrete function $V^\Delta$ computed at $(x,t,p)$ obtained using a standard interpolation operator. If the interpolation $\mathbb I$ is monotone, the resulting scheme is consistent, monotone and $L^\infty$ stable, and therefore convergent via Barles-Souganidis theorem (see \cite{BS91}). Some typical examples of monotone interpolation operators are $\PP_1$ (piecewise linear on triangles/tetrahedra) and $\Q_1$ (piecewise multilinear on rectangles) interpolations.
\par\smallskip
The discrete switch operator $N$ is computed at a node $(x_{i},p)$ as
\begin{equation}\label{Scheme11}
NV(x_{i},t_k,p) := \min_{p'\in \bar \I^p}\left\{ C(x_{i},p, p')+V(x_{i},t_k,p') \right\},
\end{equation}
which is clearly, the same operator introduced before in \eqref{swi} with the only difference that, being the operator backward in time, we look in the complementary of the admissible switches $\bar \I^p=\I\setminus \I^p$.

\subsection{An optimal visiting test for a single agent}\label{s:test1}
We illustrate the technique on a simple scenario.
We consider a classic quadratic penalization of the control in the running cost and some isotropic dynamics
$$ \ell(x,a,p,t)=\frac{\|a\|^2}{2}, \quad f(x,a,p)=a,$$
The control is chosen in the whole $A=\R^2$ (despite the fact that for classic results it is possible to prove that the optimal control is always included in a compact subset of $\R^2$, cf. \cite{BCD97}) and the problem lives in the domain $\Omega=[-1,1]^2$. The target set is composed by the points 
$$ \cT_j=0.6\,\left(\cos\left(j\frac{2\pi}{3}\right),\sin\left(j\frac{2\pi}{3}\right)\right), \quad j=1,\ldots,3.$$
The switching cost is set to 
$$ C(x,p,p')=\sum_{j\in\cI}\chi_j(p,p')\|x-\cT_j\|,$$
where $\chi_j(p,p')$ is an indicator of the possible switch relative to the $j$-target (if the player renounces to visit the $j$-target, it pays the distance from it). We underline that multiple switches are allowed by this framework:
$$
\chi_j(p,p')=\begin{cases}
0&\text{if }p_j=p'_j\\
1&\text{otherwise}
\end{cases}.
$$
We implemented the scheme \eqref{Scheme2}, where the optimization is performed by steepest descent on the control set $A$. The final time is $T=5$ while the boundary condition penalizes the states different from the final one $\bar p$, i.e. 
$$ \psi_p(x,T)=\sum_{j\in\cI}\chi_j(p,\bar p)\|x-\cT_j\|\quad \hbox{ for any }p\in \cI, \ x\in [-1,1]^2.$$
The latter means that if a trajectory ends not in the final state $\bar p$ where all targets have been considered, it pays the cost of switching to $\bar p$. This choice is made to preserve continuity on the boundary of the value function for each state.

\begin{figure}[t]
\begin{center}
\begin{tabular}{c}
\includegraphics[width=0.8\textwidth]{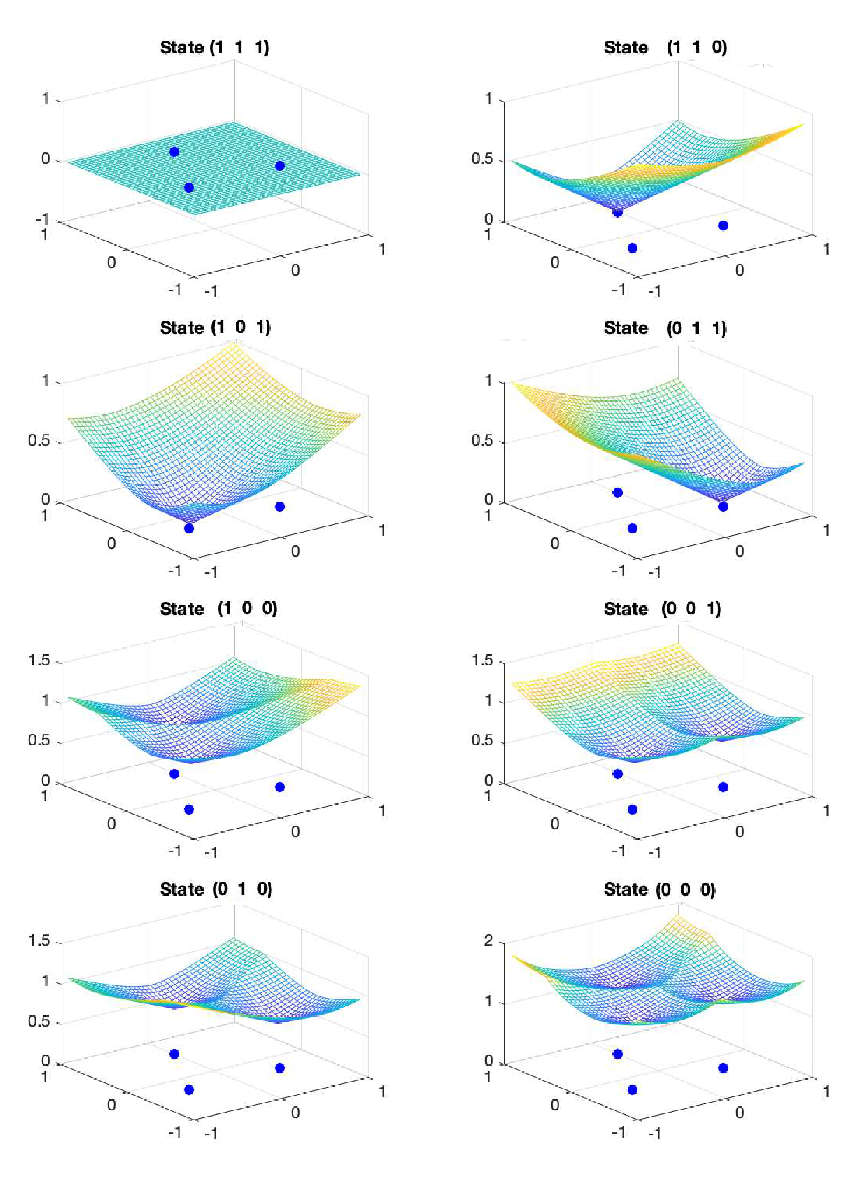}
\end{tabular}
\end{center}
\caption{Test 1. Approximated value functions in the various discrete states of the system}\label{3}
\end{figure}
\smallskip
We notice that the optimal control model that we have built tries to make any trajectory as closely as possible to all, or to the higher number of unvisited target points on the available time  $T-t$, minimizing the effort $\|\alpha\|$.
\par
Figure \ref{3} shows a collection of approximated value functions at time $T=0$ for any state of the system. The active targets (the ones still labeled as $1$ in $p$) act as attractors for the trajectory (local minimum in the value function) while they do not are highlighted if not active. 

\begin{figure}[t]
\begin{center}
\begin{tabular}{c}

\includegraphics[width=.4\textwidth]{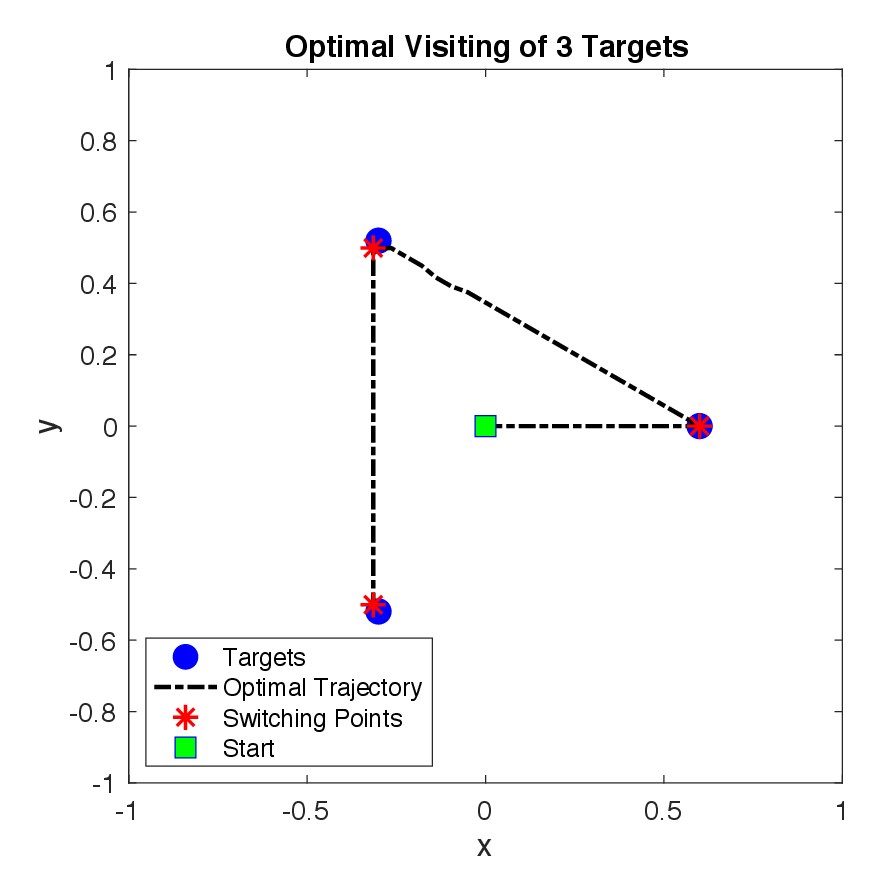}
\includegraphics[width=.4\textwidth]{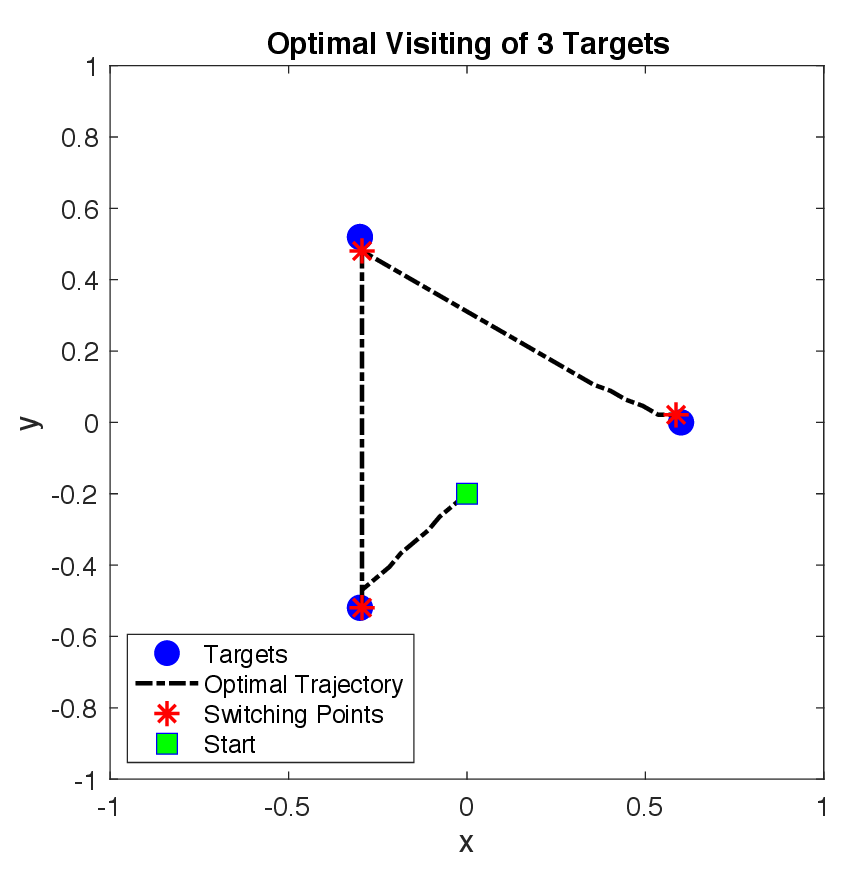}\\
\includegraphics[width=.4\textwidth]{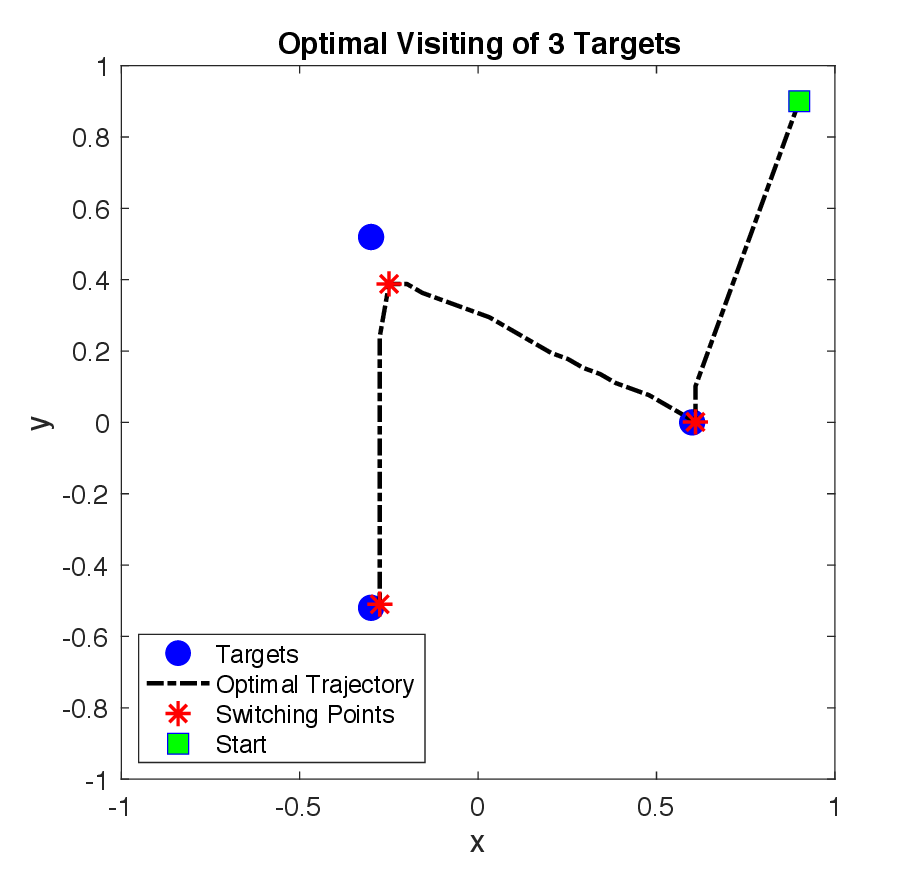}
\end{tabular}
\end{center}
\caption{Test 1. Optimal trajectories for various starting points: (top/left) $x_0=(0,0)$ (top/right) $x_0=(0,-0.2)$, (bottom) $x_0=(0.9,0.9)$. }\label{fig3}
\end{figure}
During the computation of the value function, we found the optimal control and the switching maps as arguments of the minimization \eqref{Scheme2}, \eqref{Scheme11}. We call them respectively $\alpha^*(x,t,p)$ and $\sigma^*(x,t,p)$. We build an approximation of an optimal trajectory starting from a point $x_0\in[-1,1]^2$ via the standard Euler approximation
\begin{equation*} 
\begin{cases}
Y_{x_0}(t^{n+1}):=
Y_{x_0}(t^{n})+\Delta t \alpha^*(Y_{x_0}(t^{n}),t^n,p^n)\\
p^{n+1}:=\begin{cases}
p^* & \hbox{ if }\sigma^*(Y_{x_0}(t^{n}),t^n,p^n)=p^*\\
p^{n} & \hbox{ otherwise}
\end{cases}
\\
Y_{x_0}(0)=x_0\\
p^0=\hat p
\end{cases}.
\end{equation*}
In Figure \ref{fig3}, we show some of these optimal trajectories for various choices of the starting points. We observe as, when possible, the path visits all the targets with an order that is determined by the starting point. When this is not possible (as in the case of $x_0=(0.9,0.9)$) the trajectory gets as close as possible to each target before renouncing and aiming at the next one. Outside the switching point, straight lines constitute the trajectories since the running cost is spatially and temporally homogeneous.

\section{Optimal visiting for a crowd: the continuity equation}\label{s-crowd}

In a possible study of a mean-field game for a population of agents of density $\mu$, each one of them playing a $p$-labeled optimal stopping problem like the one in Section \ref{s-single}, one would be led to consider the coupling of the system (\ref{V}) of Hamilton-Jacobi quasi-variational inequalities (coupled by the stopping costs) with a system of continuity equations (one per each level $p$ and coupled by a transfer through some sinks and sources). In particular, the sink at the level $p$ is the region where the agents stop running at level $p$ and pass to a new subsequent level $p'\in\cI_p$, and similarly for the sources. Such a coupling should provide the optimal vector field $b_p(x,t)$, giving the optimal flow, and the optimal switching time-dependent sets $\cS^t_p$ for the evolution of the masses of the agent $\mu_p$, labeled by $p$. The vector field $b_p$ and the switching sets $\cS^t_p$ will depend on the value function $V_p(x,t)$, in particular $b_p$ is typically $-D_xV_p$, see also Remark \ref{MFGobs}. An illustrative scheme of the motion rules of $\mu_p$ is sketched in Figure \ref{switch2}.

\begin{figure}[t]
\centering
\resizebox{!}{0.5\textwidth}{
\begin{tikzpicture}
%Parallelograms%
\draw (0,0)--(7*0.996,7*-0.087)--(7*0.996+2*0.707, 7*-0.087+2*0.707)--(2*0.707, 2*0.707)--cycle;
\draw (0,0+2)--(7*0.996,7*-0.087+2)--(7*0.996+2*0.707, 7*-0.087+2*0.707+2)--(2*0.707, 2*0.707+2)--cycle;
\draw (0,0+4)--(7*0.996,7*-0.087+4)--(7*0.996+2*0.707, 7*-0.087+2*0.707+4)--(2*0.707, 2*0.707+4)--cycle;

% p=... %
\node at (7*0.996+0.1*0.707+1.5, 7*-0.087+0.9*0.707){$\bar p$};
\node at (7*0.996+0.1*0.707+1.5, 7*-0.087+0.9*0.707+2){$p''\in\cI_{p'}$};
\node at (7*0.996+0.1*0.707+1.5, 7*-0.087+0.9*0.707+4){$p'\in\cI$};
%%

%MINI-ARROWS p=1%
\draw[-stealth,shift={(1.5 cm, 0.4*0.707 cm)},rotate=-10] (0,0)--(0.3,0);
\draw[-stealth,shift={(2.5 cm, 1.4*0.707 cm)},rotate=-10] (0,0)--(0.3,0);
\draw[-stealth,shift={(5.2cm, 1*0.707 cm)},rotate=-10] (0,0)--(0.3,0);
\draw[-stealth,shift={(4.5 cm, -0.1*0.707 cm)},rotate=-10] (0,0)--(0.3,0);
\draw[-stealth,shift={(6 cm, -0.2*0.707 cm)},rotate=-10] (0,0)--(0.3,0);
\draw[-stealth,shift={(6.5  cm, 1*0.707 cm)},rotate=-10] (0,0)--(0.3,0);

\draw[-stealth,shift={(6.2 cm, 0.55*0.707 cm)},rotate=-25] (0,0)--(0.4,0);
\draw[-stealth,shift={(3.7 cm, 1*0.707 cm)},rotate=-25] (0,0)--(0.4,0);
\draw[-stealth,shift={(3.5 cm, 0.2*0.707 cm)},rotate=-25] (0,0)--(0.4,0);
%%

%MINI-ARROWS p=2%
\draw[-stealth,shift={(2.5 cm, 0.4*0.707 cm)},rotate=-10] (0,2)--(0.3,2);
\draw[-stealth,shift={(1.2 cm, 1.5*0.707 cm)},rotate=-10] (0,2)--(0.3,2);
\draw[-stealth,shift={(4cm, 1.35*0.707 cm)},rotate=-10] (0,2)--(0.3,2);
\draw[-stealth,shift={(6.5  cm, 1*0.707 cm)},rotate=-10] (0,2)--(0.3,2);

\draw[-stealth,shift={(6 cm, 0.8*0.707 cm)},rotate=-25] (0,2)--(0.4,2);
\draw[-stealth,shift={(2 cm, 1.8*0.707 cm)},rotate=-25] (0,2)--(0.4,2);
%%

%MINI-ARROWS p=3%
%\draw[-stealth,shift={(1.3 cm, 1.2*0.707 cm)},rotate=-10] (0,4)--(0.3,4);
\draw[-stealth,shift={(1.7 cm, 1.7*0.707 cm)},rotate=-10] (0,4)--(0.3,4);
\draw[-stealth,shift={(1.5 cm, 0.2*0.707 cm)},rotate=-10] (0,4)--(0.3,4);
\draw[-stealth,shift={(0.7 cm, 1.5*0.707 cm)},rotate=-10] (0,4)--(0.3,4);
\draw[-stealth,shift={(3cm, 1.5*0.707 cm)},rotate=-10] (0,4)--(0.3,4);
\draw[-stealth,shift={(4 cm, -0.1*0.707 cm)},rotate=-10] (0,4)--(0.3,4);

\draw[-stealth,shift={(2 cm, 0.8*0.707 cm)},rotate=-25] (0,4)--(0.4,4);
%%

%%REGIONS%%
\draw plot [smooth cycle] coordinates {(7*0.996-0.4 *7*0.996 ,2-0.1*2*0.707) (7*0.996-0.4 *7*0.996-0.3,2+0.2*2*0.707) (7*0.996-0.25*7*0.996,2+0.6*2*0.707) (7*0.996-0.1 *7*0.996,2+0*2*0.707)};
\draw plot [smooth cycle] coordinates {(7*0.996-0.4 *7*0.996+1,2-0.1*2*0.707+2) (7*0.996-0.35 *7*0.996-0.3+1,2+0.2*2*0.707+2) (7*0.996-0.2*7*0.996+1,2+0.6*2*0.707+2) (7*0.996-0.1 *7*0.996+1,2+0*2*0.707+2)};
\draw plot [smooth cycle] coordinates {(7*0.996-0.4 *7*0.996-3,2-0.1*2*0.707+2.3) (7*0.996-0.35 *7*0.996-0.3-3,2+0.2*2*0.707+2.2) (7*0.996-0.25*7*0.996-3,2+0.6*2*0.707+2) (7*0.996-0.1 *7*0.996-3,2+0*2*0.707+2.3)};
%%

%S^t%
\node at (7*0.996-0.25 *7*0.996 ,2+0.2*2*0.707){$\cS^t_{p''}$};
\node at (7*0.996-0.1 *7*0.996 ,2+2+0.2*2*0.707){$\cS^t_{p'}$};
\node at (7*0.996-0.68 *7*0.996 ,2+2+0.38*2*0.707){$\cS^t_{p'}$};
%

%ARROWS REGION%
\draw[-stealth] (7*0.996-0.1 *7*0.996 ,2+2+0.1*1*0.500)to[in=90,out=200](5.4*0.996-0.25 *7*0.996 ,2+0.25*2*0.707);
\draw[-stealth]  (7*0.996-0.25 *7*0.996 ,2+0*2*0.707) to[in=150,out=200](7*0.996-0.25 *7*0.996 ,0.25*2*0.707);
\draw[-stealth]  (7*0.996-0.8 *7*0.996 ,2+2+0.2*2*0.707)to[in=120,out=200] (7*0.996-0.75 *7*0.996 ,0.6*2*0.707);
%%

%nabla V%
\node at (7*0.996-0.4 *7*0.996 ,2+2+0.25*2*0.707){\footnotesize $-D_xV_{p'}(x,t) $};
\node at (7*0.996-0.68 *7*0.996 ,2+0.38*2*0.707){\footnotesize $-D_xV_{p''}(x,t) $};
\node at (7*0.996-0.7 *7*0.996 ,0.4*2*0.707){\footnotesize $-D_xV_{\bar p}(x,t) $};
\end{tikzpicture}}
\caption{The evolution of the densities $\mu_p$: when outside the switching sets, for a label $p$, the density moves accordingly to the direction $-D_xV_p$; when inside a switching set, the new label is also detected by any optimization criterium which may also depend on space and time.}\label{switch2}
\end{figure}
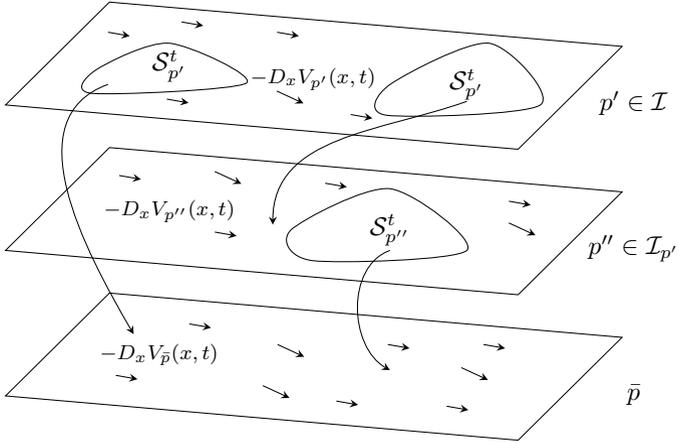

%\begin{figure}[t]
%\begin{center}
%\begin{tabular}{c}
%\includegraphics[width=0.5\textwidth]{./figures/switchscheme2.pdf}
%\end{tabular}
%\end{center}
%\caption{A scheme explaining the evolution of the density $\mu$: when outside the switching sets (for a state $p$ when $\cS^t(x,p)=p$) the density moves accordingly to the direction $-\nabla V(x,t,p)$. If inside a switching set s.t. $\cS^t(x,p)=p'$, it switches to the discrete state $p'$. }\label{switch2}
%\end{figure}
\smallskip

\par\smallskip
However, as said in the Introduction, here, from an analytical point of view, we focus only on a single continuity equation for a given suitably regular field, with possible sinks and sources and we left further analysis to future studies. In Section \ref{s-test2}, some numerical tests are shown for more general situations.

\subsection{The continuity equation with a sink}
We want to model the evolution on $\R^d$ of a mass $\mu$ subject to a given flow with a presence of a given region of $\R^d$ acting as sink: the portion of mass that possibly enters the sink instantaneously disappears. The next section will give some numerical experiments. The region representing the sink can be in general moving in time but, here, we regard it as constant. The generalization to the moving case works with same ideas and calculations. 
\par\smallskip
For the notation and the construction of the following setting, we mostly rely on \cite{notecard}. We consider a flow $\Phi:\R^d\times\R\times\R\longrightarrow\R^d$ given by the solutions of the ordinary differential system for $t\in\R$ and $x\in\R^d$,
\begin{equation}
\label{eq:system_b}
\begin{cases}
y'(s)=b(y(s), s),&s>t\\
y(t)=x
\end{cases},
\end{equation}
that is $\Phi(x,t,s)=y(s)$ solving (\ref{eq:system_b}). We will be mostly concerned with $\Phi(\cdot,0,\cdot)$. In (\ref{eq:system_b}), the field 
$b:\R^d\times\R\lra\R^d$ is assumed to be bounded, continuous and Lipschitz continuous w.r.t. $x\in\R^d$ uniformly w.r.t. $t\in\R$. Then, the flow $\Phi(\cdot, 0, \cdot)$ is Lipschitz continuous.
\par\smallskip
The sink is represented by a subset $\cS\subset\R^d$, which is assumed to be closed with compact and $C^1$ boundary. Then, for a point $x\in\R^d$, we define the possible first arrival time (to the sink) as
\begin{equation}
\label{firstat}
t_x:=\inf\{t\geq0:\Phi(x,0,t)\in\cS\},
\end{equation}
and the set of possible arrival points on the sink, for a given $t$, as
\begin{equation}
\label{switchmap}
\cS^t:=\{z\in\partial\cS:\exists x\in\R^d\ \text{such that}\ t_x=t \ \text{and}\ \Phi(x, 0, t)=z\}.
\end{equation}
We will see in \S\ref{achievetimef} that it is possible to characterize the possible first arrival time $t_x$ as the unique (viscosity) solution of an Hamilton-Jacobi-Bellman equation with suitable boundary conditions. 
\par\smallskip
We work in the set $\cG$ of positive Radon measures $\mu$ on $\R^d$ with finite first order moment, bounded by a constant $G$ (i.e., $\int_{\R^d}d\mu\leq G$ for every $\mu\in\cG$). Such a space can be endowed with the generalized Wasserstein distance (see \cite{piccross1, piccross2})
\begin{equation}
\label{wasserstein}
\cW(\mu, \mu')=\sup\left\{\int_{\R^d}\phi d(\mu-\mu'):\phi\in C^0_c(\R^d), \ \|\phi\|_{\infty}\leq1, \ \Lip(\phi)\leq1\right\}.
\end{equation}
%The term $\mathbbm{1}_{\{(\cS^t(t), t):t\in[0, T]\}}\mu(x, t)$ stands for the leaving rate of agents which stop in $\cS(t)$ at the time $t$.
Let
$$
\tilde m_0(x):=\begin{cases}m_0(x),&x\in\R^d\setminus\cS\\
0,&\text{otherwise}
\end{cases},\qquad\tilde m_0\in\cG,
$$
where $m_0\in\cG$ is given, be the initial distribution on $\R^d$. We suppose that it is absolutely continuous with a density, still denoted $\tilde m_0$, which is bounded and has a compact support.
\par\smallskip
Then, the continuity equation with a sink (to be interpreted in a suitable weak formulation), with a finite horizon $T>0$, is
\begin{equation}
\label{eq1}
\begin{cases}
\mu_t(x, t)+\operatorname{div}(\mu(x, t)b(x, t))+\mathbbm{1}_{\{(\cS^t, t):t\in[0, T]\}}\mu(x, t)=0,\\ \mbox{\hspace{7.99cm}$(x, t)\in\mathbb{R}^d\times[0, T]$}\\
\mu(x, 0)=\tilde m_0(x),\mbox{\hspace{7.17cm}$x\in\mathbb{R}^d$}
\end{cases}.
\end{equation}
\begin{remark} \label{MFGobs}
In the possible mean field game problem, using the notation of previous sections, at every level $p$ the sink would be given by the evolutive stopping set $\cS_p(t)=\{x\in\R^d:V_p(x, t)=\psi_p(x, t)\}$ and the field $b$ by the gradient of the value function $V_p$. Hence, the regularity assumptions above should be probably adjusted. In particular, the presence of more than one target leads to possible multiplicity of the optimal control, which makes the population split into several fractions, each one of them following one of the optimal behaviors. A similar situation is studied in \cite{bagfagmagpes}, \cite{bertucci2020}. Anyway, we may expect that the value function $V_p$ will be suitably regular along the optimal trajectories.
\par
In view of the mean field case, in \S\ref{measuredependence} we will study a possible dependence of the field $b$ on the measure and in \S\ref{s-test2} some numerical tests will be performed including this possibility. 
\end{remark}
In the sequel, we denote by $\Psi$ the inverse of the flow $\Phi$ starting from $\partial\cS$, i.e., all the states backwardly reached by the trajectories starting from the points of $\partial\cS$ in the time interval $[0, T]$. That is
$$
(z, \tau)\longmapsto\Psi(z, \tau, \tau),\quad 0\leq\tau\leq T, \ \ z\in\partial\cS
$$
with $\Psi(z, \tau, \tau)=\zeta(\tau)$ satisfying
\begin{equation}
\label{sistemadinamico}
\begin{cases}
\zeta'(s)=-b(\zeta(s), \tau-s),&0<s< \tau\\
\zeta(0)=z
\end{cases}.
\end{equation}
By hypotheses, $\Psi$ is Lipschitz continuous as $\Phi$ and it is such that
%$\Psi(z, \tau, \tau)$
\begin{equation}
\label{psiphi}
\begin{aligned}
\Phi(\Psi(z, \tau, \tau), 0, \tau)&=z,\\
\Psi(\Phi(x, 0, \tau), \tau, \tau)&=x.
\end{aligned}
\end{equation}
Now, fixed $s\in[0, T]$, we define the sink-reaching-points set, in time $s$, as 
$$
\cB(s):=\{x\in\R^d: t_x\leq s\},
$$
that is the set of all initial points $x\in\R^d$ from which the agents enter the sink before $s$. Observe that
\begin{multline*}
\cB(s)=\bigcup_{\tau\in[0, s]}\underbrace{\{x\in\R^d:t_x=\tau\}}_{=:\cB^{\tau}}=\bigcup_{\tau\in[0, s]}\{x=\Psi(z, \tau, \tau):z\in\cS^{\tau}\}
\end{multline*}
and that
\begin{equation}
\label{inscatolamento}
s_1\leq s_2\ \Rightarrow\ \cB(s_1)\subseteq\cB(s_2).
\end{equation}
\begin{definition}
\label{defweak}
We say that $\mu$ is a {\it weak solution} to \eqref{eq1} if $\mu\in L^1([0, T], \cG)$ is such that, for any test function $\phi\in C^{\infty}_c(\R^d\times[0, T[)$, we have
\begin{multline*}
\int_{\R^d}\phi(x, 0)d\tilde m_0(x)\\+\int_0^T\int_{\R^d\setminus\Phi(\cB(t), 0, t)}(\phi_t(x, t)+\langle D_x\phi(x, t), b(x, t)\rangle)d\mu(t)(x)dt\\
-\int_0^T\int_{\R^d}\mathbbm{1}_{\{(\cS^t, t):t\in[0, T]\}}\phi(x, t)d\mu^t(t)(x)dt=0,
\end{multline*}
where $\mu^t$ entering the last integral is $\mu^t(t)=g(t)\mu^t(0)$. The measure $\mu^t(0)$ is the disintegration of $\mu(0)$ on the fibers $\cB^{\tau}$ that compose $\cB(t)$ and $g(\cdot)$ is the density of the measure $\nu$ on the indices $\tau$ of the fibers $\cB^{\tau}$ such that
$$
E\subset\cB(t)\ \Rightarrow\ \mu(0)(E)=\int_0^t\mu^{\tau}(0)(\cB^{\tau}\cap E)d\nu=\int_0^tg(\tau)\mu^{\tau}(0)(\cB^{\tau}\cap E)d\tau.
$$
\end{definition}
\par\smallskip
For $s\in[0, T]$, we define the following measure on $\R^d$
\begin{equation}
\label{muoptstop}
\tilde\mu(s)=\begin{cases}
\Phi(\cdot, 0, s)\#\tilde m_0&\text{on}\ \R^d\setminus\Phi(\cB(s), 0, s)\\
0,&\text{otherwise}
\end{cases}.
\end{equation}
Observe that the set $\Phi(\cB(s), 0, s)$ takes into account all the agents who passed through the sink $\cS$ at least once in the time interval $[0, T]$ and then disappeared. For simplicity, we set $\mu(s):=\Phi(\cdot, 0, s)\#\tilde m_0$. 
\par
By the hypotheses on $b$, $\tilde m_0$ and by \eqref{muoptstop}, we have that $\tilde\mu(s)$ is a positive Radon measure on $\R^d$ with finite first order moment. Moreover, it certainly satisfies the constraint $\int_{\R^d}d\tilde\mu(s)\leq G$ because, with respect to $\mu(s)$, it may only lose mass through the sink. Hence $\tilde\mu(s)\in\cG$. Furthermore, it is absolutely continuous with a density which is bounded and has a compact support.
\begin{lemma}
\label{lemmalip}
For $s\in[0, T]$, consider the function
\begin{equation}
\label{mappapi}
\pi:\cB(s)\longrightarrow[0, s],\qquad \pi(x):=t_x
\end{equation}
and suppose that it is Lipschitz continuous (see Remark \ref{remarksupi}). Then, the map $s\longmapsto\tilde\mu(s)$ is Lipschitz continuous in $\cG$ (with respect to the metrics \eqref{wasserstein}). 
\end{lemma}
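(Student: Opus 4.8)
The plan is to control $\cW(\tilde\mu(s_1),\tilde\mu(s_2))$ for $0\le s_1\le s_2\le T$ (the other order follows by symmetry of $\cW$) by separating the mass that merely flows along $\Phi$ from the mass that is absorbed by $\cS$ during $(s_1,s_2]$. First I would note that, since the flow map $\Phi(\cdot,0,s):\R^d\to\R^d$ is a bi-Lipschitz bijection, the complement of $\Phi(\cB(s),0,s)$ equals $\Phi(\R^d\setminus\cB(s),0,s)$, so \eqref{muoptstop} may be rewritten as
\begin{equation*}
\tilde\mu(s)=\Phi(\cdot,0,s)\#\bigl(\mathbbm{1}_{\R^d\setminus\cB(s)}\,\tilde m_0\bigr).
\end{equation*}
By the nesting \eqref{inscatolamento} I would introduce the auxiliary measure $\nu:=\Phi(\cdot,0,s_2)\#\bigl(\mathbbm{1}_{\R^d\setminus\cB(s_1)}\,\tilde m_0\bigr)\in\cG$ and use $\cW(\tilde\mu(s_1),\tilde\mu(s_2))\le\cW(\tilde\mu(s_1),\nu)+\cW(\nu,\tilde\mu(s_2))$. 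For the first term, $\tilde\mu(s_1)$ and $\nu$ push forward the same measure along $\Phi(\cdot,0,s_1)$ and $\Phi(\cdot,0,s_2)$; for any test $\phi$ with $\|\phi\|_\infty\le1$, $\Lip(\phi)\le1$, the estimate $|\Phi(x,0,s_1)-\Phi(x,0,s_2)|\le\int_{s_1}^{s_2}\|b(\Phi(x,0,r),r)\|\,dr\le\|b\|_\infty(s_2-s_1)$ together with $\int\tilde m_0\le G$ gives $\cW(\tilde\mu(s_1),\nu)\le G\|b\|_\infty(s_2-s_1)$.

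Next I would handle the second term. Since $\cB(s_1)\subseteq\cB(s_2)$ one has $\mathbbm{1}_{\R^d\setminus\cB(s_1)}\tilde m_0=\mathbbm{1}_{\R^d\setminus\cB(s_2)}\tilde m_0+\mathbbm{1}_{\cB(s_2)\setminus\cB(s_1)}\tilde m_0$, so $\nu-\tilde\mu(s_2)=\Phi(\cdot,0,s_2)\#\bigl(\mathbbm{1}_{\cB(s_2)\setminus\cB(s_1)}\tilde m_0\bigr)$ is a nonnegative measure and, testing with $\|\phi\|_\infty\le1$,
\begin{equation*}
\cW(\nu,\tilde\mu(s_2))\le(\nu-\tilde\mu(s_2))(\R^d)=\tilde m_0\bigl(\cB(s_2)\setminus\cB(s_1)\bigr).
\end{equation*}
Thus everything reduces to proving $\tilde m_0(\cB(s_2)\setminus\cB(s_1))\le C(s_2-s_1)$, i.e.\ that the layer of starting points absorbed during $(s_1,s_2]$ is small; since $\tilde m_0$ has bounded density it is enough to bound $\cL^d(\cB(s_2)\setminus\cB(s_1))$. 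Here I would use that the sets $\cB^\tau=\{x:t_x=\tau\}$ are pairwise disjoint, so $\cB(s_2)\setminus\cB(s_1)=\bigcup_{\tau\in(s_1,s_2]}\{\Psi(z,\tau,\tau):z\in\cS^\tau\}\subseteq\Theta(\partial\cS\times(s_1,s_2])$, where $\Theta(z,\tau):=\Psi(z,\tau,\tau)$ is Lipschitz (the inverse flow $\Psi$ being Lipschitz jointly in its arguments, $b$ bounded and Lipschitz in $x$ over $[0,T]$). Since $\partial\cS$ is a compact $C^1$ hypersurface we have $\cH^{d-1}(\partial\cS)<\infty$ and hence $\cL^d(\cB(s_2)\setminus\cB(s_1))\le\Lip(\Theta)^d\,\cH^{d-1}(\partial\cS)(s_2-s_1)$. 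This is also exactly where the standing hypothesis that $\pi$ is Lipschitz (Remark~\ref{remarksupi}) can be brought in: the first-arrival time decreases at unit rate along trajectories, so $|\nabla\pi|\ge c_0>0$ a.e., and the co-area formula gives $\tilde m_0(\{s_1<\pi\le s_2\})\le c_0^{-1}\int_{s_1}^{s_2}\bigl(\int_{\cB^\tau}\tilde m_0\,d\cH^{d-1}\bigr)d\tau\le C(s_2-s_1)$, the fronts $\cB^\tau$ being Lipschitz images of pieces of $\partial\cS$.

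Putting the two contributions together yields $\cW(\tilde\mu(s_1),\tilde\mu(s_2))\le C|s_1-s_2|$ with $C$ depending only on $G$, $\|b\|_\infty$, the Lipschitz constant of $b$ in $x$, $T$, $\|\tilde m_0\|_{L^\infty}$ and $\cH^{d-1}(\partial\cS)$, which is the assertion. The routine part is the transport estimate, a plain Gr\"onwall inequality; the hard part will be the control $\tilde m_0(\cB(s_2)\setminus\cB(s_1))\le C(s_2-s_1)$ of the freshly-absorbed layer, since \emph{a priori} the set of points that enter the sink in a short time window need not have measure comparable to $|s_1-s_2|$ --- it is precisely the Lipschitz regularity of the arrival-time map $\pi$ (equivalently, the Lipschitz sweeping of the fronts $\cB^\tau$) that makes this estimate, and hence the Lipschitz continuity of $s\mapsto\tilde\mu(s)$, hold.
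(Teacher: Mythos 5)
Your argument is correct and is essentially the paper's own proof: the same splitting of $\cW(\tilde\mu(s_1),\tilde\mu(s_2))$ into a pure transport term (bounded by $G\|b\|_\infty|s_2-s_1|$) and the freshly absorbed layer (bounded by $\|\tilde m_0\|_\infty\,\cL^d(\cB(s_2)\setminus\cB(s_1))$), followed by the identical key estimate $\cL^d(\cB(s_2)\setminus\cB(s_1))\le C|s_2-s_1|$ obtained from $\|\nabla\pi\|\ge\|b\|_\infty^{-1}$ a.e.\ and the coarea formula, with the level sets $\pi^{-1}(\tau)=\Psi(\cS^\tau,\tau,\tau)$ having uniformly bounded $\cH^{d-1}$ measure. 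Your only departures are presentational (a triangle inequality through the auxiliary measure $\nu$ instead of the paper's direct three-term splitting of the supremum) plus the nice side observation that the layer estimate also follows directly from $\cB(s_2)\setminus\cB(s_1)\subseteq\Theta(\partial\cS\times(s_1,s_2])$ with $\Theta$ Lipschitz.
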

\begin{proof}
Let $s,s'\in[0, T]$, $s\geq s'$. Then, recalling \eqref{inscatolamento}, we have
\begin{multline*}
\cW(\tilde\mu(s'), \tilde\mu(s))=\sup_{\substack{\|\phi\|_{\infty}\leq 1\\ \Lip(\phi)\leq 1}}\Bigg\{\int_{\R^d}\phi(x)d\tilde\mu(s')(x)-\int_{\R^d}\phi(x)d\tilde\mu(s)(x)\Bigg\}
\\
=\sup_{\substack{\|\phi\|_{\infty}\leq 1\\ \Lip(\phi)\leq 1}}\Bigg\{\int_{\R^d\setminus\Phi(\cB(s'), 0, s')}\phi(x)d\mu(s')(x)-\int_{\R^d\setminus\Phi(\cB(s), 0, s)}\phi(x)d\mu(s)(x)\Bigg\}\\
=\sup_{\substack{\|\phi\|_{\infty}\leq 1\\ \Lip(\phi)\leq 1}}\Bigg\{\int_{\R^d}\phi(x)d(\mu(s')-\mu(s))(x)
+\int_{\Phi(\cB(s), 0, s)\setminus\Phi(\cB(s'), 0, s)}\phi(x)d\mu(s)(x)\\+\int_{\Phi(\cB(s'), 0, s)}\phi(x)d\mu(s)(x)-\int_{\Phi(\cB(s'), 0, s')}\phi(x)d\mu(s')(x)\Bigg\}
\\
=\sup_{\substack{\|\phi\|_{\infty}\leq 1\\ \Lip(\phi)\leq 1}}\Bigg\{\int_{\R^d}(\phi(\Phi(x, 0, s'))-\phi(\Phi(x, 0, s)))d\tilde m_0+\\
\int_{\cB(s)\setminus\cB(s')}\phi(\Phi(x, 0, s))d\tilde m_0(x)
+\int_{\cB(s')}(\phi(\Phi(x, 0, s))-\phi(\Phi(x, 0, s')))d\tilde m_0(x)\Bigg\}\\
\leq\sup_{\substack{\|\phi\|_{\infty}\leq 1\\ \Lip(\phi)\leq 1}}\Bigg\{\int_{\R^d}\|\Phi(x, 0, s')-\Phi(x, 0, s)\|d\tilde m_0
+\int_{\cB(s)\setminus\cB(s')}\phi(\Phi(x, 0, s))d\tilde m_0(x)\\
+\int_{\cB(s')}\|\Phi(x, 0, s)-\Phi(x, 0, s')\|d\tilde m_0(x)\Bigg\}
\\
\leq 2GM|s'-s|+\|\tilde m_0\|_{\infty}\cL^d(\cB(s)\setminus\cB(s')),
\end{multline*}
where $M$ is the time-Lipschitz constant for $\Phi$ (i.e., the bound for $b$). Hence we conclude if we estimate $\cL^d(\cB(s)\setminus\cB(s'))$. In particular, if we prove that the map $s\longmapsto\cL^d(\cB(s))$ is Lipschitz continuous, we are done. Observe that the function
$$
f:\sigma\longmapsto\pi(\Phi(y, 0, \sigma))=\pi(y)-\sigma
$$
is such that
$$
1=|f'(0)|=|\nabla\pi(y)\cdot b(y, 0)|\leq\|\nabla\pi\|\|b\|_{\infty}\leq M\|\nabla\pi\|\quad\text{a.e.}
$$
and then $\|\nabla\pi\|\geq\frac{1}{M}>0$ almost everywhere. Therefore
\begin{multline}
\label{lipcontL}
\frac{\cL^d(\cB(s)\setminus\cB(s'))}{M}\leq\int_{\cB(s)\setminus\cB(s')}\|\nabla\pi\|dx=\int_{s'}^s\cH^{d-1}(\pi^{-1}(\tau))d\tau\\
= \int_{s'}^s\cH^{d-1}(\Psi(\cS^{\tau}, \tau, \tau))d\tau\leq K|s-s'|,
\end{multline}
where we used the Coarea Formula (see \cite{evansgariep}) and the fact that the $(d-1)$-dimensional Hausdorff measure $\cH^{d-1}(\Psi(\cS^{\tau}, \tau, \tau))$ is bounded by a constant $K>0$ since, by hypotheses, $\Psi$ is Lipschitz continuous and $\cS^{\tau}$ is compact. Then, the map $s\longmapsto\cL^d(\cB(s))$ is Lipschitz continuous and the thesis follows. 
\end{proof}
\begin{remark}
\label{remarksupi}
In general, the map $\pi$ is not Lipschitz continuous in $\cB(s)$. But, in view of the possible mean field game model, the field $b$ will be the optimal feedback for an optimal control problem with controlled dynamics from a system $y'=\a$, and hence with total controllability. Then we expect that such a Lipschitz continuity may hold and, at the moment, it is not too restrictive to assume it. Anyway, future investigations will be made on this direction. 
\end{remark}
\begin{theorem}
The map $s\longmapsto\tilde\mu(s)$ is a weak solution of \eqref{eq1}.
\end{theorem}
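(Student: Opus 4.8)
First, $\tilde\mu(s)\in\cG$ for every $s$ (as already observed) and $s\mapsto\tilde\mu(s)$ is Lipschitz by Lemma \ref{lemmalip}, hence $\tilde\mu\in L^1([0,T],\cG)$; so only the integral identity of Definition \ref{defweak} has to be checked, for an arbitrary test function $\phi\in C^\infty_c(\R^d\times[0,T[)$. The starting point is the standard fact that the sink-free flow $\mu(s)=\Phi(\cdot,0,s)\#\tilde m_0$ is a weak solution of $\mu_t+\operatorname{div}(\mu b)=0$: setting $\gamma(s):=\int_{\R^d}\phi(x,s)\,d\mu(s)(x)=\int_{\R^d}\phi(\Phi(y,0,s),s)\,d\tilde m_0(y)$ and differentiating in $s$ under the integral sign (admissible, since $s\mapsto\Phi(y,0,s)$ is $C^1$ with $\partial_s\Phi(y,0,s)=b(\Phi(y,0,s),s)$, $\phi$ is smooth with compact support and $\tilde m_0$ is finite), one finds $\gamma'(s)=\int_{\R^d}\bigl(\phi_t+\langle D_x\phi,b\rangle\bigr)(x,s)\,d\mu(s)(x)$; integrating on $[0,T]$ and using $\phi(\cdot,T)\equiv0$ and $\mu(0)=\tilde m_0$ gives
\begin{equation}\label{eq:sinkfree}
\int_{\R^d}\phi(x,0)\,d\tilde m_0(x)+\int_0^T\!\!\int_{\R^d}\bigl(\phi_t+\langle D_x\phi,b\rangle\bigr)(x,s)\,d\mu(s)(x)\,ds=0 .
\end{equation}

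Next I would split the inner integral in \eqref{eq:sinkfree} as $\int_{\R^d\setminus\Phi(\cB(s),0,s)}+\int_{\Phi(\cB(s),0,s)}$. On the first region $\tilde\mu(s)=\mu(s)$, so that piece already coincides with the transport term of Definition \ref{defweak}. For the second one, since $\Phi(\cdot,0,s)$ is a bijection of $\R^d$ carrying $\cB(s)$ onto $\Phi(\cB(s),0,s)$, a change of variables back to the initial time gives, writing $F(y,s):=\phi(\Phi(y,0,s),s)$ (so that $\partial_sF(y,s)=\bigl(\phi_t+\langle D_x\phi,b\rangle\bigr)(\Phi(y,0,s),s)$),
$$
\int_{\Phi(\cB(s),0,s)}\bigl(\phi_t+\langle D_x\phi,b\rangle\bigr)(x,s)\,d\mu(s)(x)=\int_{\cB(s)}\partial_sF(y,s)\,d\tilde m_0(y).
$$
I would then analyse $B(s):=\int_{\cB(s)}F(y,s)\,d\tilde m_0(y)$; since $B(s)=\gamma(s)-\int_{\R^d}\phi(x,s)\,d\tilde\mu(s)(x)$, Lemma \ref{lemmalip} and the Lipschitz dependence of $\phi$ on time show that $B$ is Lipschitz on $[0,T]$, hence absolutely continuous.

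The heart of the argument is the a.e.\ computation of $B'$. Using the disintegration $\mu(0)(E)=\int_0^s g(\tau)\,\mu^\tau(0)(\cB^\tau\cap E)\,d\tau$, $E\subseteq\cB(s)$, of $\tilde m_0$ along the fibres $\cB^\tau$ — available precisely because $\pi$ is Lipschitz (cf.\ Remark \ref{remarksupi}), via the coarea formula, exactly as in the proof of Lemma \ref{lemmalip} — one writes $B(s)=\int_0^s g(\tau)\bigl(\int_{\cB^\tau}F(y,s)\,d\mu^\tau(0)(y)\bigr)\,d\tau$; separating the contribution of the moving endpoint from that of the $s$-dependence of $F$, and using the integrability of $\tau\mapsto g(\tau)\int_{\cB^\tau}F(y,\tau)\,d\mu^\tau(0)(y)$, one obtains for a.e.\ $s$
$$
B'(s)=g(s)\!\int_{\cB^s}F(y,s)\,d\mu^s(0)(y)+\int_{\cB(s)}\partial_sF(y,s)\,d\tilde m_0(y).
$$
Integrating on $[0,T]$ and noting that $B(T)=0$ (since $\phi(\cdot,T)\equiv0$) and $B(0)=0$ (since $\cB(0)=\cS$ and $\tilde m_0$ vanishes on $\cS$), we deduce
$$
\int_0^T\!\!\int_{\Phi(\cB(s),0,s)}\bigl(\phi_t+\langle D_x\phi,b\rangle\bigr)(x,s)\,d\mu(s)(x)\,ds=-\int_0^T g(s)\!\int_{\cB^s}\phi(\Phi(y,0,s),s)\,d\mu^s(0)(y)\,ds .
$$

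Finally, I would identify the right-hand side with the sink term $-\int_0^T\int_{\R^d}\mathbbm{1}_{\{(\cS^t,t):t\in[0,T]\}}\phi(x,t)\,d\mu^t(t)(x)\,dt$ of Definition \ref{defweak}: for $y\in\cB^s$ one has $t_y=s$, hence $\Phi(y,0,s)\in\cS^s$, while $\cB^s=\Psi(\cS^s,s,s)$ and $\mu^t(t)=g(t)\mu^t(0)$, so the two expressions coincide — both encode the $\phi$-weighted flux of mass crossing $\partial\cS$ at each instant. Inserting this identity and the above splitting into \eqref{eq:sinkfree} produces exactly the identity of Definition \ref{defweak}, which proves that $\tilde\mu$ is a weak solution of \eqref{eq1}. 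The delicate point of the plan is the penultimate step: making rigorous the disintegration of $\tilde m_0$ along the fibres $\cB^\tau$ and the a.e.\ differentiation of $B$, which rely on the Lipschitz regularity of $\pi$ (Remark \ref{remarksupi}) and on the coarea estimate already exploited in Lemma \ref{lemmalip}; the book-keeping matching the loss term with the sink term as written in Definition \ref{defweak} also requires some care, because of the implicit identification of the fibre $\cB^s$ with $\cS^s$ through the flow.
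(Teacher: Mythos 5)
Your argument is correct and is essentially the paper's own proof in a slightly different bookkeeping order: the paper differentiates $s\mapsto\int\phi(x,s)\,d\tilde\mu(s)(x)$ directly and splits off the $\cB(s)$-integral, whereas you first write the sink-free weak identity for $\Phi(\cdot,0,s)\#\tilde m_0$ and then correct it — but the core device (disintegration of $\tilde m_0$ along the fibres $\cB^\tau$ with density $g$, Leibniz differentiation producing the boundary term $g(s)\int_{\cB^s}F(y,s)\,d\mu^s(0)(y)$, and its identification with the sink term via $\cB^s=\Psi(\cS^s,s,s)$) is identical, as are the regularity inputs from Lemma \ref{lemmalip} and Remark \ref{remarksupi}. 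No changes needed.
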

\begin{proof}
Let $\phi\in\cC^{\infty}_c(\R^d\times[0, T[)$. By Lemma \ref{lemmalip}, the map
$$
s\longmapsto\int_{\R^d}\phi(x, s)d\tilde\mu(s)(x)
$$
is absolutely continuous and then we have
\begin{multline*}
\frac{d}{ds}\int_{\R^d}\phi(x, s)d\tilde\mu(s)(x)=\frac{d}{ds}\int_{\R^d\setminus\Phi(\cB(s), 0, s)}\phi(x, s)d\mu(s)(x)
%\\
%=\frac{d}{ds}\left(\int_{\R^d}\phi(x, s)d\mu(s)(x)-\int_{\Phi(\cB(s), 0, s)}\phi(x, s)d\mu(s)(x)\right)
\\
=\frac{d}{ds}\int_{\R^d}\phi(\Phi(x, 0, s), s)d\tilde m_0(x)-\frac{d}{ds}\int_{\cB(s)}\phi(\Phi(x, 0, s), s)d\tilde m_0(x)
\\
=\int_{\R^d}(\phi_s(\Phi(x, 0, s), s)+\langle D_x\phi(\Phi(x, 0, s), s), b(\Phi(x, 0, s), s)\rangle)d\tilde m_0(x)
\\
-\frac{d}{ds}\int_{\cB(s)}\phi(\Phi(x, 0, s), s)d\tilde m_0(x)
\\
=\int_{\R^d}(\phi_s(y, s)+\langle D_x\phi(y, s), b(y, s)\rangle)d\mu(s)(y)\\-\frac{d}{ds}\int_{\cB(s)}\phi(\Phi(x, 0, s), s)d\tilde m_0(x).
\end{multline*}
We have to compute
$$
\frac{d}{ds}\int_{\cB(s)}\phi(\Phi(x, 0, s), s)d\tilde m_0(x).
$$
\par\noindent
%Since the map $s\longmapsto\int_{\R^d}\phi(x, s)d\mu_{\text{opt. stopp.}}(s)(x)$ is absolutely continuous (so that we can apply the Fundamental Theorem of Calculus)
By the Disintegration Theorem (see Remark \ref{remabs}), we get
\begin{multline*}
\frac{d}{ds}\int_{\cB(s)}\phi(\Phi(x, 0, s), s)d\tilde m_0(x)
\\
=\frac{d}{ds}\int_0^s\int_{\{x\in\R^d:t_x=\tau\}}\phi(\Phi(x, 0, s), s)d\tilde m_0^{\tau}(x)d\nu(\tau)
\\
=\frac{d}{ds}\int_0^s\int_{\{x\in\R^d:t_x=\tau\}}\phi(\Phi(x, 0, s), s)g(\tau)d\tilde m_0^{\tau}(x)d\tau
\\
=\int_{\{x\in\R^d:t_x=s\}}\phi(\Phi(x, 0, s), s)g(s)d\tilde m_0^s(x)
\\
+\int_{\cB(s)}(\phi_s(\Phi(x, 0, s), s)+\langle D_x\phi(\Phi(x, 0, s), s), b(\Phi(x, 0, s), s)\rangle)d\tilde m_0(x).
\end{multline*}
Now, recalling that $\{x\in\R^d:t_x=s\}=\Psi(\cS^s, s, s)$ by definition, we have
\begin{multline*}
\int_{\{x\in\R^d:t_x=s\}}\phi(\Phi(x, 0, s), s)g(s)d\tilde m_0^s(x)
\\
=\int_{\Psi(\cS^s, s, s)}\phi(\Phi(x, 0, s), s)g(s)d\tilde m_0^s(x)
=\int_{\cS^s}\phi(y, s)d\mu^s(s)(y),
\end{multline*}
where $\mu^s(s):=g(s)(\Phi(\cdot, 0, s)\#\tilde m_0^s)$. Finally, we obtain
\begin{multline*}
\frac{d}{ds}\int_{\R^d}\phi(y, s)d\tilde\mu(s)(y)\\
=\int_{\R^d\setminus\Phi(\cB(s), 0, s)}(\phi_s(y, s)+\langle D_x\phi(y, s), b(y, s)\rangle)d\mu(s)(y)
\\
-\int_{\cS^s}\phi(y, s)d\mu^s(s)(y).
\end{multline*}
Since $\tilde\mu(0)=\tilde m_0$, integrating this between $0$ and $T$ we get
%and the map $s\longmapsto\int_{\cS^s}\phi(y, s)d\mu^s(s)(y)$ is absolutely continuous
\begin{multline*}
\int_{\R^d}\phi(y, 0)d\tilde m_0(y)
\\
+\int_0^T\int_{\R^d\setminus\Phi(\cB(s), 0, s)}(\phi_s(y, s)+\langle D_x\phi(y, s), b(y, s)\rangle)d\mu(s)(y)ds
\\
-\int_0^T\int_{\cS^s}\phi(y, s)d\mu^s(s)ds=0,
\end{multline*}
that is
\begin{multline*}
\int_{\R^d}\phi(y, 0)d\tilde m_0(y)
\\
+\int_0^T\int_{\R^d\setminus\Phi(\cB(s), 0, s)}(\phi_s(y, s)+\langle D_x\phi(y, s), b(y, s)\rangle)d\mu(s)(y)ds
\\
-\int_0^T\int_{\R^d}\mathbbm{1}_{\{(\cS^s, s):s\in[0, T]\}}\phi(y, s)d\mu^s(s)(y)ds=0.
\end{multline*}
\end{proof}
\begin{remark}
\label{remabs}
The Disintegration Theorem in the previous proof is applied as follows: we set $Y=\cB(s)$, $X=[0, s]$ and we consider the map \eqref{mappapi}
$$
\pi:Y\longrightarrow X,\qquad \pi(x)=t_x
$$
and $\nu=\pi\#\tilde m_0\in\cG(X)$. In this way $\pi^{-1}(\tau)=\{x\in\R^d:t_x=\tau\}$ for every $\tau	\in[0, s]$. Then, there exists a $\nu$-almost everywhere uniquely determined family $\{\tilde m_0^{\tau}\}_{\tau\in[0, s]}\subset\cG(Y)$ such that for every Borel measurable function $f:Y\longrightarrow[0, +\infty]$,
$$
\int_Yf(y)d\tilde m_0(y)=\int_0^s\int_{\{x\in\R^d:t_x=\tau\}}f(y)d\tilde m_0^{\tau}(y)d\nu(\tau).
$$
Moreover, in view of \eqref{lipcontL}, that is the Lipschitz continuity of the map $s\longmapsto\cL^d(\cB(s))$, the measure $\nu$ is absolutely continuous on $X$ with a $L^{\infty}$ density denoted by $g$. 
\end{remark}
\begin{theorem}
The continuity equation \eqref{eq1} has a unique solution given by $s\longmapsto\tilde\mu(s)$.
\end{theorem}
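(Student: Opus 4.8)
\emph{Existence} of the solution $s\longmapsto\tilde\mu(s)$ is exactly the content of the previous theorem, so the plan is to prove \emph{uniqueness}. Since the weak formulation of Definition \ref{defweak} is affine in $\mu$ and all of its terms \emph{except} the transport integral $\int_0^T\!\int_{\R^d\setminus\Phi(\cB(t),0,t)}(\phi_t+\langle D_x\phi,b\rangle)\,d\mu(t)\,dt$ are fixed once the datum $\tilde m_0$ is given (the initial term is $\int_{\R^d}\phi(\cdot,0)\,d\tilde m_0$, and the sink term involves $\mu^t(t)=g(t)\mu^t(0)$ with $\mu^t(0)$ the disintegration of $\tilde m_0$), it suffices to show: if $\mu_1,\mu_2$ are two weak solutions and $\nu(t):=\mu_1(t)-\mu_2(t)$, then $\nu(s)=0$ as a signed measure on $\R^d\setminus\Phi(\cB(s),0,s)$ for a.e.\ $s\in[0,T[$. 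Since, consistently with \eqref{eq1}, \eqref{muoptstop} and the domain of integration in Definition \ref{defweak}, a weak solution carries no mass on the ``dead'' set $\Phi(\cB(s),0,s)$, this gives $\mu_1=\mu_2$ in $L^1([0,T],\cG)$, hence $\mu_1=\mu_2=\tilde\mu$.

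The main step is a backward-characteristics duality argument. Fix $\psi\in C^\infty_c(\R^d)$ and $s\in[0,T[$; let $\eta$ be the (Lipschitz, compactly supported in $x$) solution of the backward transport equation $\eta_t+\langle D_x\eta,b\rangle=0$ with $\eta(\cdot,s)=\psi$, i.e.\ $\eta(x,t)=\psi(\Phi(x,t,s))$ via the flow generated by $b$. Pick $\gamma_\ep\in C^\infty([0,T];[0,1])$ with $\gamma_\ep\equiv1$ near $0$, $\gamma_\ep\equiv0$ near $T$, $\gamma_\ep\to\mathbbm{1}_{[0,s]}$ pointwise and $\gamma_\ep'\,dt\rightharpoonup-\delta_s$. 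Using $\phi:=\gamma_\ep\,\eta$ as test function we have $\phi_t+\langle D_x\phi,b\rangle=\gamma_\ep'\,\eta$, so the transport integral reduces to $\int_0^T\gamma_\ep'(t)\big(\int_{\R^d\setminus\Phi(\cB(t),0,t)}\eta(x,t)\,d\mu_i(t)(x)\big)\,dt$, while the initial and the sink terms are the \emph{same} for $i=1,2$ because they only depend on $\tilde m_0$. Subtracting the two weak formulations, letting $\ep\to0$, and using that $t\longmapsto\int_{\R^d\setminus\Phi(\cB(t),0,t)}\eta(x,t)\,d\nu(t)(x)$ lies in $L^1(0,T)$ (as $\mu_i\in L^1([0,T],\cG)$ and $\eta$ is bounded), so that the limit holds at every Lebesgue point, we obtain $\int_{\R^d\setminus\Phi(\cB(s),0,s)}\psi(x)\,d\nu(s)(x)=0$ for a.e.\ $s$. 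Letting $\psi$ run through $C^\infty_c(\R^d)$ yields $\nu(s)=0$ on $\R^d\setminus\Phi(\cB(s),0,s)$ for a.e.\ $s$, as wanted.

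The step I expect to be delicate is the admissibility of this test function: $\eta(x,t)=\psi(\Phi(x,t,s))$ is only Lipschitz in $(x,t)$, since under our hypotheses the flow $\Phi$ is merely Lipschitz, whereas Definition \ref{defweak} is phrased for $\phi\in C^\infty_c$. I would first extend, by a routine mollification argument, the validity of the weak formulation to test functions that are Lipschitz and compactly supported in $\R^d\times[0,T[$; this is harmless because the only $\mu$-dependent term is the transport integral, the masses and first moments of $\mu_i(t)$ are uniformly bounded, and smooth approximants $\phi_n$ of $\phi$ can be chosen so that $\int(\phi_{n,t}+\langle D_x\phi_n,b\rangle)\,d\mu_i(t)$ converges (using the explicit flow representation of $\eta$ when $\mu_i(t)$ fails to be absolutely continuous). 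The remaining points are bookkeeping: choosing $\gamma_\ep$ so that $\gamma_\ep'\,dt\rightharpoonup-\delta_s$ tests correctly against $L^1$ integrands, and recording explicitly that a weak solution is concentrated on the alive set $\R^d\setminus\Phi(\cB(t),0,t)$, so that coincidence of $\mu_1$ and $\mu_2$ there is coincidence everywhere.
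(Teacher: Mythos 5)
Your first step --- identifying $\mu(s)$ on the ``alive'' set $\R^d\setminus\Phi(\cB(s),0,s)$ by testing against backward characteristics $\eta(x,t)=\psi(\Phi(x,t,s))$ --- is essentially the paper's own argument (the paper differentiates $s\longmapsto\int w\,d\mu(s)$ with $w(x,s)=\phi(\Phi(x,0,t-s))$ and integrates in time, rather than inserting a cutoff $\gamma_\ep$, but the content is the same, and the Lipschitz-test-function caveat you raise applies equally to the paper's choice). The genuine gap is your second step. You assert that ``a weak solution carries no mass on the dead set $\Phi(\cB(s),0,s)$'' as if it were part of, or an immediate consequence of, Definition~\ref{defweak}, and you file its verification under ``bookkeeping.'' It is neither: the definition only imposes an integral identity in which the transport term is integrated over the alive set and the sink term is expressed through the disintegration of the initial measure, so nothing in your duality computation ever tests the restriction of $\mu(t)$ to $\Phi(\cB(t),0,t)$. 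Concluding $\mu_1=\mu_2$ from their coincidence on the alive set therefore begs the question.

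The paper devotes the entire second half of its proof to exactly this point: it takes $\phi$ supported in $\Phi(\cB(t),0,t)$, runs the same backward-transport computation, and shows --- using the sink term, the disintegration of $\tilde m_0$ along the fibers $\cB^{\tau}$, and the semigroup identity $w(\Phi(y,0,s),s)=\phi(\Phi(y,0,t))$ --- that
$$
\int_{\R^d}\phi\,d\mu(t)=\int_{\R^d}\phi(\Phi(y,0,t))\,d\tilde m_0-\int_{\cB(t)}\phi(\Phi(y,0,t))\,d\tilde m_0+\int_{\cB(0)}\phi(\Phi(y,0,t))\,d\tilde m_0,
$$
which vanishes because $\supp\phi\subset\Phi(\cB(t),0,t)$ forces the first two integrals to coincide and $\tilde m_0=0$ on $\cB(0)=\cS$. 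You would need to supply an argument of this kind (or else explicitly build the support condition into the notion of weak solution, which changes the statement being proved). A secondary, fixable issue: since $\eta$ depends on $s$, the passage $\int_0^T\gamma_\ep'(t)F_s(t)\,dt\to-F_s(s)$ requires $s$ to be a Lebesgue point of the very function $F_s$ indexed by $s$, which needs a short Fubini-type justification rather than a bare appeal to ``a.e.\ $s$.''
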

\begin{proof}
Let $\phi\in\cC^{\infty}(\R^d)$ with $\operatorname{supp}(\phi)\subset\R^d\setminus\Phi(\cB(t), 0, t)$ for every $t\leq T$. Fix $t\leq T$ and let us consider the map
\begin{equation}
\label{mappaw}
w:\R^d\times[0, t]\longrightarrow\R,\qquad w(x, s):=\phi(\Phi(x, 0, t-s)).
\end{equation}
Then, $w$ is Lipschitz continuous in both variables $(x, s)\in\R^d\times[0, t]$ with $\operatorname{supp}(w)\subset(\R^d\setminus\Phi(\cB(s), 0, s))\times[0, t]$. Moreover, by \eqref{psiphi} we have
$$
\phi(x)=w(\Psi(x, t-s, t), s)=\phi(\Phi(\Psi(x, t-s, t), 0, t-s))
$$
and, recalling that $\Psi(x, t, t)$ is the solution of \eqref{sistemadinamico} with $\tau=t$, the function $w$ satisfies
\begin{multline*}
0=\frac{d}{ds}\phi(x)=w_s(\Psi(x, t-s, t), s)\\+\langle D_xw(\Psi(x, t-s, t), s), b(\Psi(x, t-s, t), s)\rangle\ \ \text{a.e.}
\end{multline*}
and hence, in general,
$$
w_s(y, s)+\langle D_xw(y, s), b(y, s)\rangle=0\quad\text{a.e. in}\ \R^d\times(0, t).
$$
Using $w$ as a test function for a generic $\mu$ satisfying Definition \ref{defweak}, for almost all $s\leq t$ we have
\begin{multline*}
\frac{d}{ds}\int_{\R^d}w(y, s)d\mu(s)(y)=\int_{\R^d}w_s(y, s)d\mu(s)(y)+\int_{\R^d}w(y, s)d\mu_s(s)(y)
\\
=\int_{\R^d}(-\langle D_xw(y, s), b(y, s)\rangle+\langle D_xw(y, s), b(y, s)\rangle)d\mu(s)(y)=0
\end{multline*}
since $\operatorname{supp}(w)\subset(\R^d\setminus\Phi(\cB(s), 0, s))\times[0, t]$, which implies
\begin{multline*}
\int_{\cS^s}w(y, s)d\mu^s(s)(y)\\=\int_{\Phi(\cB(s), 0, s)}(w_s(y, s)+\langle D_xw(y, s), b(y, s)\rangle)d\mu(s)(y)=0,\ \ s\leq t.
\end{multline*}
Therefore, integrating between $0$ and $t$, we get 
$$
\int_{\R^d}w(y, t)d\mu(t)(y)=\int_{\R^d}w(y, 0)d\mu(0)(y)
$$
and then
$$
\int_{\R^d}\phi(y)d\mu(t)(y)=\int_{\R^d}\phi(\Phi(y, 0, t))d\tilde m_0(y),
$$
which shows that $\mu(t)=\Phi(\cdot, 0, t)\#\tilde m_0$ on $\R^d\setminus\Phi(\cB(t), 0, t)$.
\par\smallskip
Now we have to prove that $\mu(t)=0$ on $\Phi(\cB(t), 0, t)$, that is
$$
\int_{\R^d}\phi(x)d\mu(t)(x)=0
$$
for any $\phi\in\cC^{\infty}(\R^d)$ with $\operatorname{supp}(\phi)\subset\Phi(\cB(t), 0, t)$ for every $t\leq T$.
Again fix $t\leq T$ and let us consider the map \eqref{mappaw}. Then, proceeding as before, we get
$$
w_s(y, s)+\langle D_xw(y, s), b(y, s)\rangle=0\quad\text{a.e. in}\ \R^d\times(0, t).
$$
Using $w$ as a test function for a generic $\mu$ satisfying Definition \ref{defweak}, for almost all $s\leq t$ we have
\begin{multline*}
\frac{d}{ds}\int_{\R^d}w(y, s)d\mu(s)(y)=\int_{\R^d}w_s(y, s)d\mu(s)(y)+\int_{\R^d}w(y, s)d\mu_s(s)(y)
\\
=\int_{\R^d}(-\langle D_xw(y, s), b(y, s)\rangle+\langle D_xw(y, s), b(y, s)\rangle)d\mu(s)(y)
\\
-\int_{\Phi(\cB(s), 0, s)}(w_s(y, s)+\langle D_xw(y, s), b(y, s)\rangle)d\mu(s)(y)-\int_{\cS^s}w(y, s)d\mu^s(s)(y)
\\
=-\int_{\Phi(\cB(s), 0, s)}(w_s(y, s)+\langle D_xw(y, s), b(y, s)\rangle)d\mu(s)(y)\\-\int_{\cS^s}w(y, s)d\mu^s(s)(y).
\end{multline*}
Now, observe that
\begin{multline*}
\int_{\cS^s}w(y, s)d\mu^s(s)(y)+\int_{\Phi(\cB(s), 0, s)}(w_s(y, s)+\langle D_xw(y, s), b(y, s)\rangle)d\mu(s)(y)
\\
=\int_{\{x\in\R^d:t_x=s\}}w(\Phi(y, 0, s), s)g(s)d\mu^s(0)(y)
\\
+\int_{\cB(s)}(w_s(\Phi(y, 0, s), s)+\langle D_xw(\Phi(y, 0, s), s), b(\Phi(y, 0, s), s)\rangle)d\mu(0)(y)
\\
=\frac{d}{ds}\int_{\cB(s)}w(\Phi(y, 0, s), s)d\mu(0)(y).
\end{multline*}
Then, by \eqref{mappaw} and the semigroup property of the flow $\Phi$, since $\mu(0)=\tilde m_0$ we get
$$
\frac{d}{ds}\int_{\R^d}w(y, s)\mu(s)(y)=-\frac{d}{ds}\int_{\cB(s)}\phi(\Phi(y, 0, t))d\tilde m_0(y)
$$
and hence, integrating between $0$ and $t$, 
$$
\int_{\R^d}w(y, t)d\mu(t)(y)=\int_{\R^d}w(y, 0)d\mu(0)(y)-\int_0^t\frac{d}{ds}\int_{\cB(s)}\phi(\Phi(y, 0, t))d\tilde m_0(y).
$$
Therefore
\begin{multline*}
\int_{\R^d}\phi(y)d\mu(t)(y)=\int_{\R^d}\phi(\Phi(y, 0, t))d\tilde m_0(y)-\int_{\cB(t)}\phi(\Phi(y, 0, t))d\tilde m_0(y)
\\
+\int_{\cB(0)}\phi(\Phi(y, 0, t))d\tilde m_0(y).
\end{multline*}
Since $\tilde m_0=0$ in $\cS=\cB(0)$ and $\supp(\phi)\subset\Phi(\cB(t), 0, t)$, the thesis follows.
\end{proof}

\subsection{On the measure dependence of the field $b$}
\label{measuredependence}
We consider now the field $b$ depending on the measure, that is
$$
\begin{aligned}
b: C^0([0, T], \cG)\times\R^d\times[0, T]&\lra\R^d,\\ (\mu, x, t)&\longmapsto b(\mu, x, t).
\end{aligned}
$$
We suppose that it is bounded and continuous in the whole entry $(\mu, x, t)$ and Lipschitz continuous w.r.t. $x\in\R^d$ uniformly w.r.t. $(\mu, t)\in C^0([0, T], \cG)\times[0, T]$, that is, there exists $L>0$ such that
$$
\|b(\mu, x, t)-b(\mu, y, t)\|\leq L\|x-y\|,\quad\forall x,y\in\R^d,\ (\mu, t)\in C^0([0, T], \cG)\times[0, T].
$$
In the evolution of the flow given by $b$, the sink is always represented by $\cS$. In the sequel, for every $\mu\in C^0([0, T], \cG)$ fixed, we will also use the notation
$$
\begin{aligned}
b[\mu]:\R^d\times[0, T]&\lra\R^d\\(x, t)&\longmapsto b[\mu](x, t):=b(\mu, x, t)
\end{aligned}
$$
and we consider the corresponding flow with sink evolution given by the field $b[\mu]$. As, for every fixed $\mu$, we denote by $\tilde\mu$ the unique solution of the corresponding problem \eqref{eq1}, which is, for every $t\in[0, T]$, 
\begin{equation}
\label{defmut}
\tilde\mu(t)=\begin{cases}
\Phi[\mu](\cdot, 0, t)\#\tilde m_0&\text{on }\R^d\setminus\Phi[\mu](\cB[\mu](t), 0, t)\\
0,&\text{otherwise}
\end{cases},
\end{equation}
where $\Phi[\mu]$ is the flow generated by the field $b[\mu]$ and $\cB[\mu](\cdot)$ is the corresponding sink-reaching-points set. We also denote by $\pi[\mu]$ the map as in \eqref{mappapi}. 
\begin{theorem}
Let us suppose that $\pi[\mu]$ is Lipschitz continuous uniformly in $\mu\in C^0([0, T], \cG)$ (see Remark \ref{remarksuip}). For every $\mu\in C^0([0, T], \cG)$, we have $\tilde\mu\in C^0([0, T], \cG)$. Moreover, the function
$$
\psi:C^0([0, T], \cG)\lra C^0([0, T], \cG),\qquad\psi(\mu):=\tilde\mu
$$
has a fixed point in $C^0([0, T], \cG)$. This means that the problem of flow with sink and with field depending on the measure has a solution. 
\end{theorem}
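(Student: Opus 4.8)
The plan is to run a Schauder fixed point argument for $\psi$ on a well-chosen nonempty, convex, compact subset of $C^0([0,T],\cG)$. The routine ingredients are the construction of such an invariant set and Gronwall-type estimates for the flow; the genuinely delicate point will be the continuity of $\psi$.

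First I would pin down the ambient objects. Pick $R_0>0$ with $\supp\tilde m_0\subset\overline B_{R_0}(0)$. Since $\|b[\mu]\|_\infty\le\|b\|_\infty$ uniformly in $\mu$, the flow satisfies $\Phi[\mu](\overline B_{R_0}(0),0,t)\subset\overline B:=\overline B_{R_0+\|b\|_\infty T}(0)$ for every $t\in[0,T]$ and every $\mu$, whence $\supp\tilde\mu(t)\subset\overline B$. Next, inspecting the proof of Lemma~\ref{lemmalip}, the Lipschitz-in-time constant of $s\longmapsto\tilde\mu(s)$ depends only on the bound for $b$, on $G$, on $\|\tilde m_0\|_\infty$ and on the Hausdorff bound for $\Psi[\mu](\cS^\tau,\tau,\tau)$ (which in turn is controlled by the $x$-Lipschitz constant of $b$ and by $\cH^{d-1}(\partial\cS)$); under the standing hypothesis that $\pi[\mu]$ is Lipschitz uniformly in $\mu$, all these are uniform in $\mu$, so I may fix a single $\Lambda>0$ with $\cW(\tilde\mu(s),\tilde\mu(s'))\le\Lambda|s-s'|$ for every $\mu$. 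Set
\begin{multline*}
\cK:=\Big\{\nu\in C^0([0,T],\cG):\ \supp\nu(t)\subset\overline B\ \text{and}\ {\textstyle\int_{\R^d}d\nu(t)}\le G\ \forall t\in[0,T],\\
\cW(\nu(s),\nu(s'))\le\Lambda|s-s'|\ \forall s,s'\in[0,T]\Big\}.
\end{multline*}
Then $\cK$ is nonempty (it contains the constant curve $t\mapsto\tilde m_0$), convex (each of the three constraints is stable under convex combinations, since $\cW$ is a supremum of linear functionals and hence jointly convex), and $\psi(\cK)\subset\cK$ by the previous paragraph. Finally $\cK$ is compact in $C^0([0,T],\cG)$: the family of $\nu\in\cG$ with mass $\le G$ supported in the fixed compact set $\overline B$ is tight, hence $\cW$-relatively compact (on such families the generalized Wasserstein topology agrees with the weak one, cf.~\cite{piccross1,piccross2}), and the curves in $\cK$ are equi-Lipschitz, hence equicontinuous, with values in this compact set, so Arzel\`a--Ascoli applies.

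The core of the argument, and the step I expect to be the main obstacle, is the continuity of $\psi$ on $\cK$. Let $\mu_n\to\mu$ in $\cK$. Since $b$ is uniformly continuous on the compact set $(\{\mu_n\}_n\cup\{\mu\})\times\overline B\times[0,T]$, we get $b[\mu_n]\to b[\mu]$ uniformly on $\overline B\times[0,T]$, and then Gronwall's lemma gives $\Phi[\mu_n]\to\Phi[\mu]$ and $\Psi[\mu_n]\to\Psi[\mu]$ uniformly. The subtle passage is from convergence of the flows to convergence of the sink data, since the first arrival time map and the set $\cB[\mu](t)$ are not continuous functionals of $b$ in general; here I would use the transversality estimate $\|\nabla\pi[\mu]\|\ge 1/\|b\|_\infty$ extracted from the proof of Lemma~\ref{lemmalip}, which, being uniform in $\mu$ (and combined with the fact that $\partial\cS$ is fixed and $C^1$), forces $x\mapsto t_x[\mu_n]$ to converge to $x\mapsto t_x[\mu]$ uniformly on $\overline B_{R_0}(0)$. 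Consequently $\cL^d\big(\cB[\mu_n](t)\,\triangle\,\cB[\mu](t)\big)\to0$ uniformly in $t$ and $\Phi[\mu_n](\cB[\mu_n](t),0,t)\to\Phi[\mu](\cB[\mu](t),0,t)$; feeding this together with the uniform density bounds into the explicit formula \eqref{defmut} yields $\cW\big(\psi(\mu_n)(t),\psi(\mu)(t)\big)\to0$ uniformly in $t\in[0,T]$, i.e.\ $\psi(\mu_n)\to\psi(\mu)$ in $C^0([0,T],\cG)$.

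Finally, regarding $\cK$ as a nonempty convex compact subset of the locally convex space $C^0\big([0,T],\cM(\R^d)\big)$, where $\cM(\R^d)$ is the space of finite signed measures with finite first moment equipped with the (extended) generalized Wasserstein norm, Schauder's fixed point theorem applied to the continuous self-map $\psi\colon\cK\to\cK$ produces a fixed point $\bar\mu\in\cK$, which by construction solves the flow-with-sink problem with measure-dependent field. Everything but Step three is standard; the one place where real care is required is the uniform-in-$\mu$ control of the arrival-time map under perturbation of the field, which is exactly what the standing Lipschitz hypothesis on $\pi[\mu]$ buys.
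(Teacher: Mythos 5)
Your argument is correct and follows essentially the same route as the paper's: uniform convergence of $b[\mu_n]$ to $b[\mu]$ on compacts, hence of the flows and of the sink-reaching sets, gives continuity of $\psi$, while the uniform Lipschitz bound from Lemma \ref{lemmalip} (via the hypothesis on $\pi[\mu]$) together with Ascoli--Arzel\`a gives compactness, and Schauder concludes. The only immaterial difference is packaging: you apply Schauder to an explicitly constructed convex compact invariant set $\cK$, whereas the paper checks directly that $\psi$ is continuous with relatively compact image; both versions treat the same delicate point (stability of the first arrival time under perturbation of the field) at a comparable level of detail.
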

\begin{proof}
At first observe that, under the previous hypotheses, by analogous considerations as in the case with no measure dependence we have $\tilde\mu(t)\in\cG$ for all $t\in[0, T]$. 
%Let us prove that $\tilde\mu(t)\in\cG$. By the hypotheses on $b[\mu]$, on $\tilde m_0$ and by \eqref{defmut}, we have that $\tilde\mu(t)$ is a Borel measure on $\R^d$. Moreover, it certainly satisfies the constraint $\int_{\R^d}d\tilde\mu(t)\leq G$ because, with respect to $\Phi[\mu](\cdot, 0, t)\#\tilde m_0$, it may only lose mass through the sink. 
\par\smallskip
Let us prove that $\tilde\mu\in C^0([0, T], \cG)$. In particular we have to prove that, whenever $t_n\ra t$ in $[0, T]$, then $\tilde\mu(t_n)\ra\tilde\mu(t)$ weakly-star. This comes from standard regularity results for the push-forward measures (see \eqref{defmut}) and from the fact that $\cB[\mu](t_n)\ra\cB[\mu](t)$ in the Hausdorff metrics and as $d$-dimensional Lebesgue measure, and hence similarly for $\Phi[\mu](\cB[\mu](t_n), 0, t_n)$ and $\Phi[\mu](\cB[\mu](t), 0, t)$. 
\par\smallskip
Now, to prove the second statement of the theorem, we have to show that the function $\psi$ is continuous and compact. In this way, we can conclude by the Schauder fixed point Theorem.
\par\smallskip
At first we prove the continuity of $\psi$. Let $\mu_n\ra\mu$ in $C^0([0, T], \cG)$. We have to prove that $\tilde\mu_n\ra\tilde\mu$. Let us consider the two trajectories
$$
x_n(t)=x_0+\int_0^tb[\mu_n](x_n(s), s)ds,\qquad x(s)=x_0+\int_0^tb[\mu](x(s), s)ds
$$
and we prove that $x_n$ uniformly converges to $x$ on compact sets. Indeed, by the continuity and boundedness hypotheses, the field $b$ is bounded and uniformly continuous on $(\{\mu_n\}_n\cup\{\mu\})\times K\times[0, T]$, where $\{\mu_n\}_n\cup\{\mu\}$ is the compact set in $C^0([0, T], \cG)$ given by the whole sequence with its limit, and $K\subset\R^d$ is compact. Then, the sequence $b[\mu_n]$ is bounded and equicontinuous on $K\times[0, T]$ and moreover it pointwise converges to $b[\mu]$. Hence, by the Ascoli-Arzel\`a Theorem, the convergence is uniform on $K\times[0, T]$. From this we deduce the desired uniform convergence of the trajectories. Therefore, also the flows $\Phi[\mu_n]$ uniformly converge to $\Phi[\mu]$ on compact sets. From this we have that $\cB[\mu_n](\cdot)$ converges to $\cB[\mu](\cdot)$ in the Hausdorff distance and uniformly in time, and we conclude the convergence of $\tilde\mu_n$ to $\tilde\mu$ in $C^0([0, T], \cG)$, that is, for every $\phi\in C_c^0(\R^d)$,
$$
\sup_{t\in[0, T]}\left|\int_{\R^d}\phi(x)d\mu_n(t)(x)-\int_{\R^d}\phi(x)d\mu(t)(x)\right|\ra0\quad\text{as}\ n\ra+\infty.
$$
\par\smallskip\noindent
It remains to prove that $\psi$ has compact image. We can restrict to measures on a compact set $K\subset\R^d$ independent of $\mu$, which contains all the possible compact supports of the measures $\tilde\mu$ (because of bounded dynamics and finite horizon). Then, since $s\longmapsto\tilde\mu(s)$ is Lipschitz continuous uniformly in $\mu$ (similarly as in Lemma \ref{lemmalip} and using the hypothesis on $\pi[\mu]$) with values in the compact set $\cG$ of Radon measures on $K$, we get the desired conclusion by Ascoli-Arzel\`a Theorem.
\end{proof}
\begin{remark}
\label{remarksuip}
In the previous proof, we used the fact that $s\longmapsto\tilde\mu(s)$ is Lipschitz continuous uniformly in $\mu$ and this comes from the Lipschitz continuity of $\pi[\mu]$ uniformly w.r.t. $\mu$. However, just assuming only the Lipschitz continuity of $\pi[\mu]$, and not necessarily uniformly in $\mu$, after some calculations one can prove the equicontinuity w.r.t. $\mu$ of $s\longmapsto\tilde\mu(s)$ and then still apply Ascoli-Arzel\`a Theorem.
\end{remark}

\subsection{A differential characterization of the first arrival time}
\label{achievetimef}
%Fixed $t\in[0, T]$, the target is now the pair $(\cS, \tau)$ for every $\tau\geq t$. The \textit{(time--dependent) achievement time function} is defined as follows:
In this paragraph, we see a characterization of the possible first arrival time $t_x$ \eqref{firstat} to the sink $\cS$ as the unique viscosity solution of an Hamilton-Jacobi-Bellman equation with proper boundary conditions. To do this, we define
$$
\begin{aligned}
&\Tau:\R^d\times[0, T]\lra[0, +\infty[,\\ &\Tau(x, t):=t_{(x, t)}=\inf\{\zeta\geq t:\Phi(x, t, \zeta)\in\cS\}.
\end{aligned}
$$
Fixed $s\in[t, T]$, the sink-reaching-points set, that is the set from which the agents are able to reach the target $\cS$, is 
$$
\cB(s)=\{(x, t)\in\R^d\times[0, T]:\Tau(x, t)\leq s\}.
$$
Let us set 
$$
\cB:=\bigcup_{s\in[t, T]}\cB(s)=\{(x, t)\in\R^d\times[0, T]:\Tau(x, t)\leq T\}.
$$
We suppose that $\Tau$ is continuous in $\cB$ and that 
\begin{equation}
\label{Talbordo}
\Tau(x, t)=T \ \ \text{for any}\ (x, t)\in\partial\cB.
\end{equation}
As with similar calculations as in \cite{BCD97}, taking into account that now $\Tau$ is time dependent (and then the associated Hamilton-Jacobi-Bellman equation is evolutive) and that there is no control, we have the following
\begin{proposition}
The function $\Tau$ is the unique viscosity solution of 
$$
\begin{cases}
-\Tau_t(x, t)-b(x, t)\cdot D_x\Tau(x, t)-1=0,&(x, t)\in\cB\\
\Tau(x, t)=t,&(x, t)\in(\partial\cS,t)\\
\Tau(x, t)=T,&(x, t)\in\partial\cB
\end{cases}.
$$
\end{proposition}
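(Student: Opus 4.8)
The plan is to read the Proposition as a time-dependent, control-free minimum-time problem and to follow the classical scheme (see \cite{BCD97, Festa2017127}): establish a dynamic programming relation for $\Tau$, deduce from it that $\Tau$ is a viscosity solution in the interior of $\cB$, verify the two pieces of boundary data, and prove uniqueness. Since there is no control and the field $b$ is autonomously assigned, the dynamic programming relation is in fact an exact identity: as $b$ is Lipschitz in $x$ uniformly in $t$, the flow enjoys the semigroup property $\Phi(\Phi(x,t,t+h),t+h,s)=\Phi(x,t,s)$ for $s\ge t+h$, and for $(x,t)\in\cB$ with $\Tau(x,t)>t$ and $0<h<\Tau(x,t)-t$ the trajectory issued from $(x,t)$ does not meet $\cS$ on $[t,t+h]$; I would therefore conclude
\[
\Tau(x,t)=\Tau\big(\Phi(x,t,t+h),t+h\big),
\]
equivalently that $s\mapsto\Tau(\Phi(x,t,s),s)$ is constant on $[t,\Tau(x,t)]$. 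This is the dynamic programming principle I shall use.

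From this identity the viscosity property follows in the usual way. Since $\Tau$ is continuous on $\cB$ by hypothesis, I would test with $\varphi\in C^1$: at an interior point $(x_0,t_0)$ where $\Tau-\varphi$ has a local maximum (resp.\ minimum), inserting the curve $s\mapsto(\Phi(x_0,t_0,s),s)$ into the one-sided inequality, using the dynamic programming identity, dividing by $h$ and letting $h\to 0^+$, one finds that $s\mapsto\varphi(\Phi(x_0,t_0,s),s)$ is nondecreasing (resp.\ nonincreasing) at $s=t_0$, i.e.\ $\varphi_t+b\cdot D_x\varphi\ge 0$ (resp.\ $\le 0$) at $(x_0,t_0)$; this gives the subsolution (resp.\ supersolution) inequality for the (here linear) equation of the Proposition. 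The boundary conditions are immediate: if $x\in\partial\cS$ then $\Phi(x,t,t)=x\in\cS$, so $\Tau(x,t)=t$, which is the datum on $(\partial\cS,t)$; and $\Tau=T$ on $\partial\cB$ is exactly the standing assumption \eqref{Talbordo}.

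For uniqueness I would argue either by comparison or, more transparently here, along characteristics. The Hamiltonian is linear in the gradient, independent of the unknown, and its $x$-dependence enters only through the Lipschitz field $b$, so the standard doubling-of-variables argument (\cite{BCD97}) yields equality of two bounded viscosity solutions sharing the boundary data, once the boundary behavior is under control; alternatively one uses that $\cB$ is foliated by the characteristics $s\mapsto(\Phi(\cdot,\cdot,s),s)$, that any continuous viscosity solution of this linear transport equation is constant along them, and that each characteristic through a point of $\cB$ reaches $\partial\cS$ in forward time at the instant prescribed there by the Dirichlet datum $\Tau=t$, so that the solution is forced to coincide with $\Tau$. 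The step I expect to be the genuine obstacle is not any of these manipulations but the two structural facts taken here as hypotheses --- the continuity of $\Tau$ up to $\partial\cB$ and the identity $\Tau\equiv T$ on $\partial\cB$ of \eqref{Talbordo} --- without which neither the interior equation nor the comparison is meaningful: since $\cB$ is not a cylinder $\Omega\times[0,T]$ but has a slanted lateral boundary, establishing them in general calls for a careful study of the flow near $\cS$.
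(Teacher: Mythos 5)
Your overall template --- a dynamic programming identity along the flow, viscosity inequalities by testing along the characteristic, boundary data by inspection, uniqueness by comparison or by characteristics --- is exactly the one the paper implicitly appeals to: the paper offers no proof at all, only the remark that the result follows by ``similar calculations as in \cite{BCD97}'' adapted to the evolutive, control-free setting. But there is a concrete gap at the central step of your argument. The identity you adopt, $\Tau(x,t)=\Tau(\Phi(x,t,t+h),t+h)$, says that $\Tau$ is \emph{constant} along the curves $s\mapsto(\Phi(x,t,s),s)$, and it is indeed the correct identity for the first arrival time as defined (the infimum in \eqref{firstat} is taken over \emph{absolute} times $\zeta\ge t$, so sliding the base point forward by $h$ along the trajectory does not change the arrival instant). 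Running your test-function computation with this identity yields $\varphi_t+b\cdot D_x\varphi\ge 0$ at maxima of $\Tau-\varphi$ and $\le 0$ at minima, i.e.\ the viscosity inequalities for the transport equation $-u_t-b\cdot D_xu=0$. This is \emph{not} the equation of the Proposition: the supersolution half of the stated equation requires $-\varphi_t-b\cdot D_x\varphi\ge 1$ at minima, which does not follow from $-\varphi_t-b\cdot D_x\varphi\ge 0$. The ``$-1$'' in the Hamiltonian corresponds to the Bellman-type identity $\Tau(x,t)=h+\Tau(\Phi(x,t,t+h),t+h)$, which holds for the time-to-go $\Tau(x,t)-t$ but is false for the arrival time as literally defined. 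So the sentence ``this gives the subsolution (resp.\ supersolution) inequality for the equation of the Proposition'' is where your proof breaks: you must either justify the DPP with the extra ``$+h$'' (which forces you to work with $\Tau(x,t)-t$ and to replace the datum $\Tau=t$ on $\partial\cS$ by $0$), or prove the equation without the ``$-1$''. As written, the two halves of your argument prove different statements.

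The rest is fine in outline. The boundary identities are immediate from the definitions and from \eqref{Talbordo}, and your closing observation --- that the genuine analytic burden lies in the standing hypotheses (continuity of $\Tau$ up to the slanted boundary $\partial\cB$, and \eqref{Talbordo} itself) --- matches the paper's own caveat in the remark following the Proposition. For uniqueness, the characteristics route additionally needs the (easy but not free) facts that every forward characteristic issued from a point of $\cB$ remains in $\cB$ until it exits through $\partial\cS$ or $\partial\cB$ within time $T$, and that a continuous viscosity solution of a linear transport equation with Lipschitz drift is constant along characteristics; both deserve an explicit statement since $\cB$ is not a cylinder.
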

\par\smallskip
Hence, to conclude, the possible first arrival time $t_x$ to the sink $\cS$ is given by the solution $\Tau(x, t)$ of the above problem at the time $t=0$. 
\begin{remark} Observe that the hypotheses in this section are not too restrictive to be assumed. Indeed, a priori, $\Tau$ may be discontinuous in $\cB$. But, as we noticed in Remark \ref{remarksupi} too, in view of the possible mean field game model, the field $b$ comes as the optimal feedback from a system like $y'=\a$, with total controllability. Then it is not too strong to assume that $\Tau$ is continuous in $\cB$. Moreover, again in view of the possible mean field model, condition \eqref{Talbordo} holds only on the boundary which is not viable. 
\end{remark}

\section{Approximation of the continuity equation and coupled discrete system}\label{s-test2}

We want to build an approximation scheme for the continuity equation \eqref{eq1}. We include the possibility of a non linear, non local dependency of the vector field $b(\mu,x,t):=b[\mu](x,t)$ on the density $\mu$.  In similarly  to what already done for the Hamilton-Jacobi equation, we propose a semi-Lagrangian scheme to approximate the equation. The advantages of the choice of such framework are the beneficial stability properties of the scheme without restrictions on the choice of the parameters and some monotone properties that are a direct consequence of the formulation of the scheme.  The scheme is an adaptation of the one proposed in \cite{Carlini20154269,carlini2016PREPRINT}. We also refer the reader to \cite{FestaTosinWolfram}, where a similar scheme has been applied  to a non-linear continuity equation modeling  a kinetic pedestrian model.  
\par\smallskip
The scheme is a discretization of the representation formula \eqref{muoptstop}, where the initial density $\mu_0$ is pushed by a flow $\Phi$. We assume to know the vector field $b$ and the optimal stopping set $\cS$ and we proceed by steps: first of all we restrict ourselves on a finite domain $\Omega$, and $b[\mu]: \Omega \times[0,T]\lra\R$  is the restriction of the given smooth vector field which is a data of the problem. Additionally, $\mu_0:=\tilde m_0$ is a smooth initial datum defined on $\Omega$. Formally, at time $t\in [0,T]$ the solution of  \eqref{eq1} is given, implicitly, by the image of the measure $\mu_0 d x $ induced by the flow $x\in \Omega \longmapsto \Phi(x,0,t)$, where, given $0\leq s \leq t \leq T$, $\Phi(x,s,t)$ denotes the solution of 
$$
\begin{cases}
\dot x(t)=b[\mu](x,t),&t\in ]s,T]\\
x(s)=x
\end{cases}.
$$
Given the discretization parameters $\Delta x$, $\Delta t$, we construct a grid on $\Omega\times[0,T]$ as in Section \ref{s:scheme1}. We call $M,N$ the number of elements respectively in the spatial and in the temporal grid.
\par
We discretize \eqref{eq1} using its representation formula by means of the flow $\Phi$. This can be considered an extension of the \emph{characteristic method} for these kind of equation (we take the idea of using \eqref{muoptstop} instead of a direct discretization of the equation from \cite{CDY}).
For $j\in \{1,\ldots,M\}$, $k=0,\ldots, N-1$,  we  define the discrete  characteristics (obtained by an explicit Euler scheme) as
\begin{equation*}\label{car} 
\Phi_{j,k}[\mu]:= x_{j}+\Delta t \,b[\mu](x_j,t_k).
 \end{equation*}
 where we recall $t_k:=k\Delta t\in[0,T]$.
Let us define $\{{\beta_{j}} \}$ the set of base functions, such that $\beta_j(x_i)=\delta_{i,j}$  (the Kronecker symbol) and $\sum_j \beta_j(x)=1$ for each $x\in \overline\Omega$. We approximate the solution  $\mu$ of the problem \eqref{eq1} in a finite-volumes way by a sequence $\{\Upsilon(x_i,t_k)\}$, in the space $\cG$ of the piecewise linear functions on the discrete grid, where 
$$\Upsilon(x_i,t_k) \approx\frac{1}{|E_i|}\int_{E_i} \mu(x_i,t_k) d x,$$
with, in the $d=2$ case, $E_i=[x_i-e_1\Delta x/2, x_i+e_1\Delta x/2]\times[x_i-e_2 \Delta x/2, x_i+e_2\Delta x/2]$ with $e_1=(1,0)^T$ and $e_2=(0,1)^T$ and therefore $|E_i|=\Delta x^2$. 
We compute the  sequence  $\{\Upsilon(x_i,t_k)\}$ by the following explicit scheme:  
\begin{equation}\label{schemefp}
\begin{cases}
\Upsilon(x_i,t_{k+1})= G(\Upsilon,i,k),&k=0,\ldots, N-1,\ i=1,\ldots,M,\\
\Upsilon(x_i,0)=\frac{1}{\Delta x^2} \int_{E_{i}}\mu_{0}(x) d x,&i=1,\ldots,M
\end{cases},
\end{equation}
where %$m_k:=(m_{i,k})_{i=1,\ldots,M}$ and 
 $G:\cG\times \{1,\ldots,M\}\times\{0,\ldots,N-1\}\longrightarrow \R$ is defined by
\begin{eqnarray*}\label{definicionG}
 & G (\Upsilon,i,k) :=   \sum_{j=1}^M \mathbbm{1}_{S(x_j,t_k)}\Upsilon(x_j, t_k)
\beta_{i} \left(\Phi_{j,k}\left[ \Upsilon\right]\right),\\
& S(x_j,t_k):= 
\begin{cases}
0 & \hbox{if }x\in\cS^{t_k},\\
1 & \hbox{elsewhere}.
\end{cases}
\end{eqnarray*} 
The role of the term $G$ is to sum up the contribution of any portion of $\Upsilon$ at state $(x_j,t_k)$ such that the approximation of the characteristics starting from there falls inside $E_i$. Essentially the scheme ``pushes'' $\Upsilon$ to move in the direction given by the discrete flow $\Phi$. We observe also as the function $S(x_j,t_k)$ above  approximates the action of the term $\mathbbm{1}_{\{(\cS^t, t):t\in[0, T]\}}\mu(x, t)$ in  \eqref{eq1} and it cancels the contribution of the density in the regions where the optimal stopping occurs.

We underline, once again, that thanks to the semi-Lagrangian structure of the scheme, the convergence of it does not rely on a CFL like condition, so we are free to choose the discretization parameters $\Delta x$ and $\Delta t$  as we like. The convergence of a similar scheme has been proved in \cite{Carlini20154269} for the 1D case and in \cite{Carlini2018} in a more general setting.

We pass now to describe the discrete system obtained by a fully coupling between \eqref{V}-\eqref{eq1}. To avoid repetition, we illustrate the hybrid case where a density $\Upsilon$ can switch between various states $\cI$. This means that in the continuity equation there will be considered both pit than source points. The case of the optimal stopping, covered by the theory discussed before, is the simplified case where $\cI=\{0, 1\}$. We underline as, the existence of an equilibrium state has not analytically proven yet, the latter means that in the last test of the following section we limit ourselves to show some promising numerical evidence and to illustrate the basic features of the model that we obtain.

The coupling between \eqref{V}-\eqref{eq1} for a generic collections of discrete states $\cI$ brings us to consider the system
\begin{equation}\label{SchemeFull}
\begin{cases}
V(x_{i},t_{k-1},p) = \min \left( NV(x_{i},t_{k},p), \Sigma\left(x_{i},t_k,p,V,\Upsilon \right)\right), & \, i=1,\ldots,M,\\
\Upsilon(x_i,t_{k+1},p)= \overline G(\Upsilon,V,i,k,p),&k=\,1,\ldots, N-1,\\
V(x_{i},T,p) =\psi_p(x_i),& p\in\cI,\\
\Upsilon(x_i,0,p)=\frac{ \int_{E_{i}}\mu_{0}(x) d x}{\Delta x^2},
\end{cases}
\end{equation}
where the discrete operators $NV$ and $\Sigma$ are as defined in Section \ref{s-test1}. Additionally, 
 $\overline G$ is defined by
\begin{eqnarray*}\label{definicionbarG}
& \overline G (\Upsilon,V,i,k,p) :=   \sum_{q\in\cI}\sum_{j=1}^M \mathbbm{1}_{S(x_j,t_k,q)=p}\Upsilon(x_j, t_k,q)
\beta_{i} \left(\Phi_{j,k}\left[ \Upsilon,V\right]\right),\\
& S(x_j,t_k,q):= \arg\min_{p\in \cI} V(x_i,t_k,p).
\end{eqnarray*} 
We observe how the discrete system \eqref{SchemeFull} is composed by a forward in time $\Upsilon$ and a backward in time $V$. In the mean field game case (which is, at the moment, not fully covered by the theory developed in this paper) $\Sigma$ depends on the approximation of the density $\Upsilon$ (typically by a penalization of the running cost term) and the flux $\Phi$ aims in a direction determined by the potential $V$. This means that in the case of mutual dependency between the two variables the solution must be found as an equilibrium point between the two processes (see scheme in Figure \ref{sch}) as we shortly discuss in the last test.
\par

\begin{figure}[t]
\centering
\resizebox{!}{0.42\textwidth}{
\begin{tikzpicture}
%RECTANGLES%
\draw (0,0)--(0,2)--(8,2)--(8,0)--cycle;
\draw (0,4)--(0,6.3)--(8,6.3)--(8,4)--cycle;
%%

%CENTRAL TEXT%
\node at (-2,3) {$-D_xV (x,t,p), \cS^t(x, p)$};
\node at (10,3) {$\ell [Y] (x,t,p)$};
%%

%CURVE ARROWS%
\draw (-0.3,1) to[out=180,in=270](-2,2.5);
\draw[-stealth] (-2,3.5) to[out=90,in=180](-0.3,5.15);

\draw[-stealth]  (10,2.5) to[out=270,in=0](8.3,1);
\draw (8.3,5.15) to[out=0,in=90](10,3.5);
%%

%RIGHT ARROWS%
\draw[stealth-] (1,0.2)--(7,0.2);
\draw[-stealth] (1,4.2)--(7,4.2);
%%

%TEXT%
\node at (4,1.5) {\textbf{Hamilton-Jacobi inequality}};
\node at (4,1) {computes $V$};
\node at (4,0.5) {backward in time};

\node at (4,5.7) {\textbf{Continuity Equation}};
\node at (4,5.2) {moves the density $Y$};
\node at (4,4.7) {forward in time};
\end{tikzpicture}}
\caption{The coupling between continuity equation and Hamilton-Jacobi variational inequality.}\label{sch}
\end{figure}
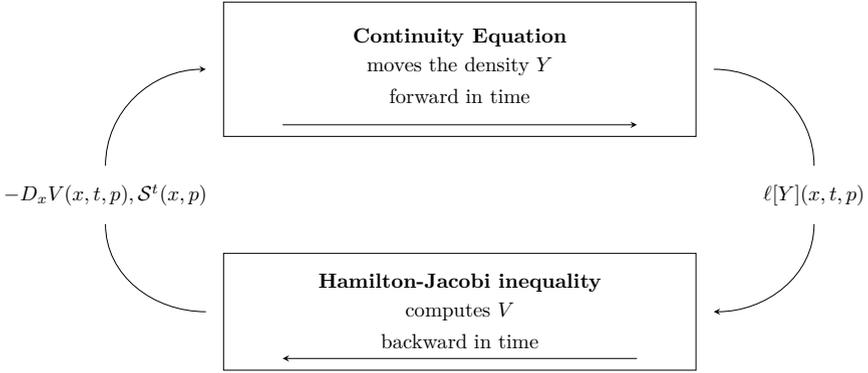
%\begin{figure}[t]
%\begin{center}
%\begin{tabular}{c}
%\includegraphics[width=0.8\textwidth]{./figures/couplingScheme.pdf}
%\end{tabular}
%\end{center}
%\caption{The coupling between continuity equation and Hamilton-Jacobi variational inequality. }\label{sch}
%\end{figure}

\subsection{An optimal stopping test for a crowd}

Let us consider the simple scenario of an optimal stopping problem for a density of agents. We recall that in this case, $\cI=\{0,1\}$. Therefore, the only interesting state where to compute \eqref{SchemeFull} is $p=0$.
In the domain $\Omega=[-1,1]^2$ an initial density distribution of
$$  \mu_0(x)=e^{-8\|x\|^2} \quad \hbox{ restricted to }\Omega,$$
aims to reach a single target point $\T=(0,0.6)$. We use the optimal control vector field of the control problem associated \eqref{V} as motion vector field for the continuity equation \eqref{eq1} (i.e. $b(x,t)=-D_x V_p(x,t)$) and the switching (or stopping) map $\cS^t$ as in \eqref{switchmap}.
\par\smallskip
For the other operators of the optimal stopping problem, we consider, as before, quadratic penalization of the control in the running cost and isotropic dynamics
$$ \ell(x,a,p,t)=\frac{\|a\|^2}{2}, \quad f(x,a,p)=a.$$
The control is in $A=\R^2$ and the switching cost is set as in test \ref{s:test1} to 5
$$ C(x,p,p')=\sum_{j\in\cI}\chi_j(p,p')\|x-\cT_j\|. $$
The final time is $T=0.26$ while the boundary condition at time $t=T$ is
$$ \psi_p(x,T)=2\sum_{j\in\cI}\chi_j(p,\bar p)\|x-\cT_j\|\quad \hbox{ for any }p\in \cI, \;x\in [-1,1]^2.$$
This means that at any moment an agent can choose to aim to the target lowering its distance from it (paying the relative running cost) or renounce to it paying the switching cost and exit from the game.
\par\smallskip
It is worth to underline that in this case, the coupling between equation \eqref{V} and \eqref{eq1} is only in one direction. The solution of \eqref{V} is independent from the distribution of the density therefore $b(x,t)$ is independent from $\mu(x,t)$. Hence we can proceed, first of all, to the resolution of \eqref{V} with discretization parameters $\Delta x=0.04$ and $\Delta t=0.02$, obtaining a control map $b(x,t)$ and a switching map  $\cS^t$ as reported in Figure \ref{4}.

\begin{figure}[t]
\begin{center}
\begin{tabular}{c}
\includegraphics[width=0.8\textwidth]{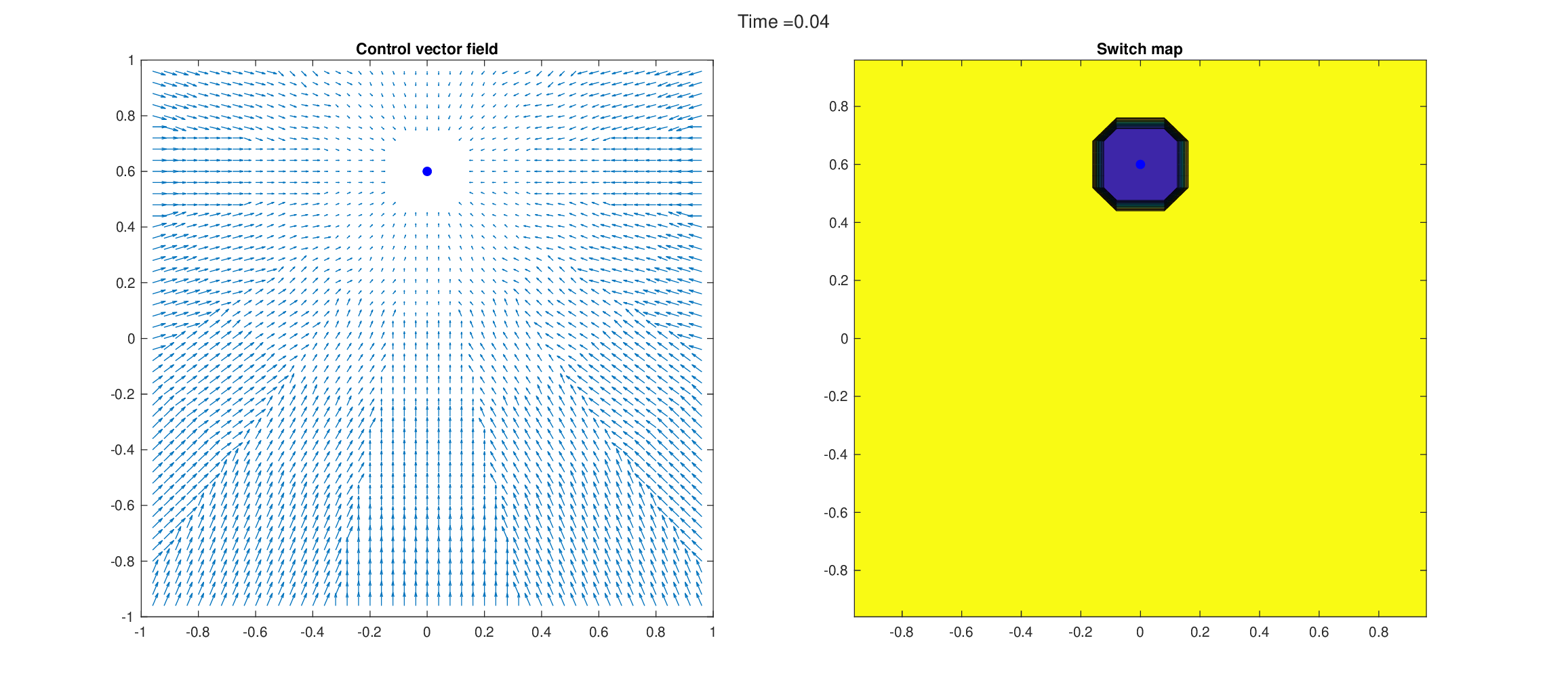}\\
\includegraphics[width=0.8\textwidth]{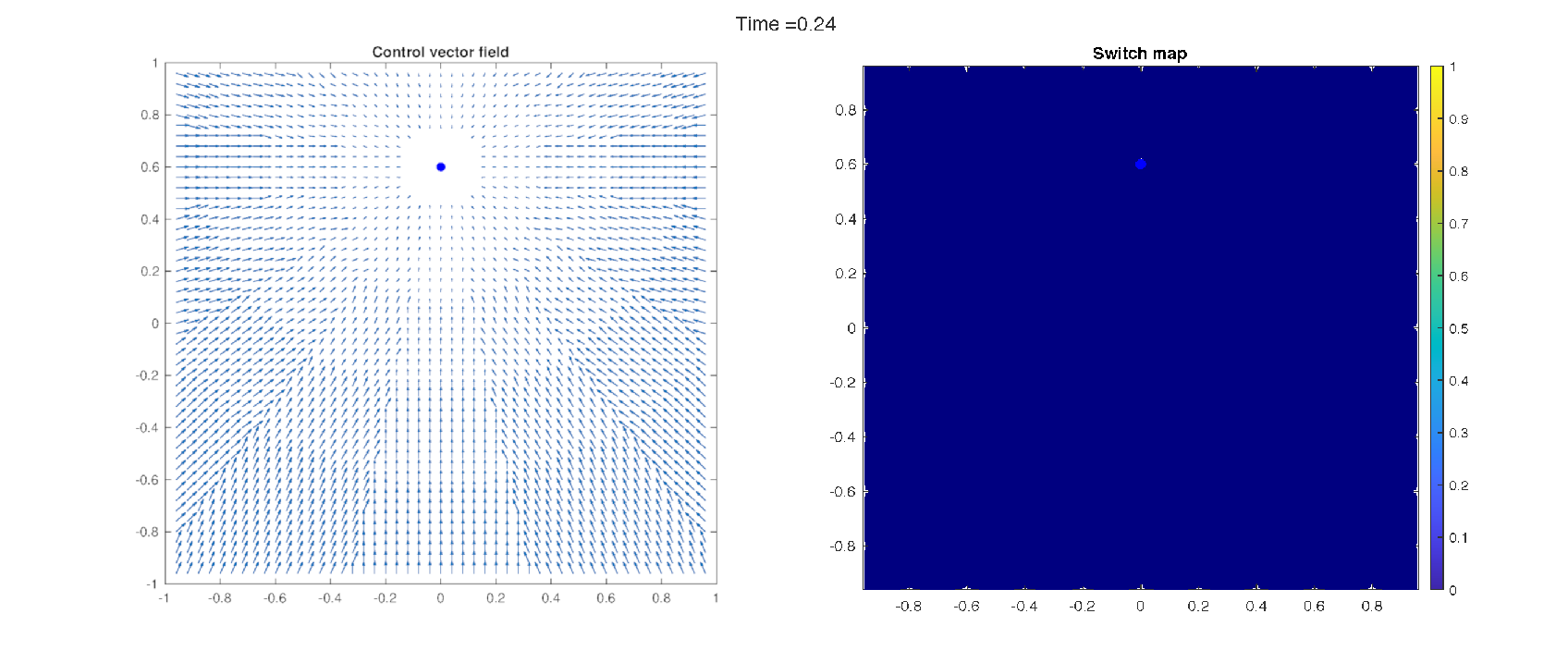}
\end{tabular}
\end{center}
\caption{Test 2. Control vector field (left) and switch map (right) of the optimal stopping problem at $t=0.04, 0.24$.}\label{4}
\end{figure}
\par\smallskip
As it was predictable in this easy scenario, we observe that the optimal solution suggests aiming directly to the target with speed inversely proportional to the square of the distance from it. Once one reached a region close enough to the target (approximately a ball of radius 0.2 as it is deducible from the switching map), it is more convenient to renounce to get closer to the target and paying the switching cost to end the game. Control vector field and switching map do not change during the evolution of the system until its final moment $t\approx T$: here, since it is more convenient paying the switching cost than the final boundary condition, the switching map includes all the domain. 

\begin{figure}[t]
\begin{center}
\begin{tabular}{c}

\includegraphics[width=0.95\textwidth]{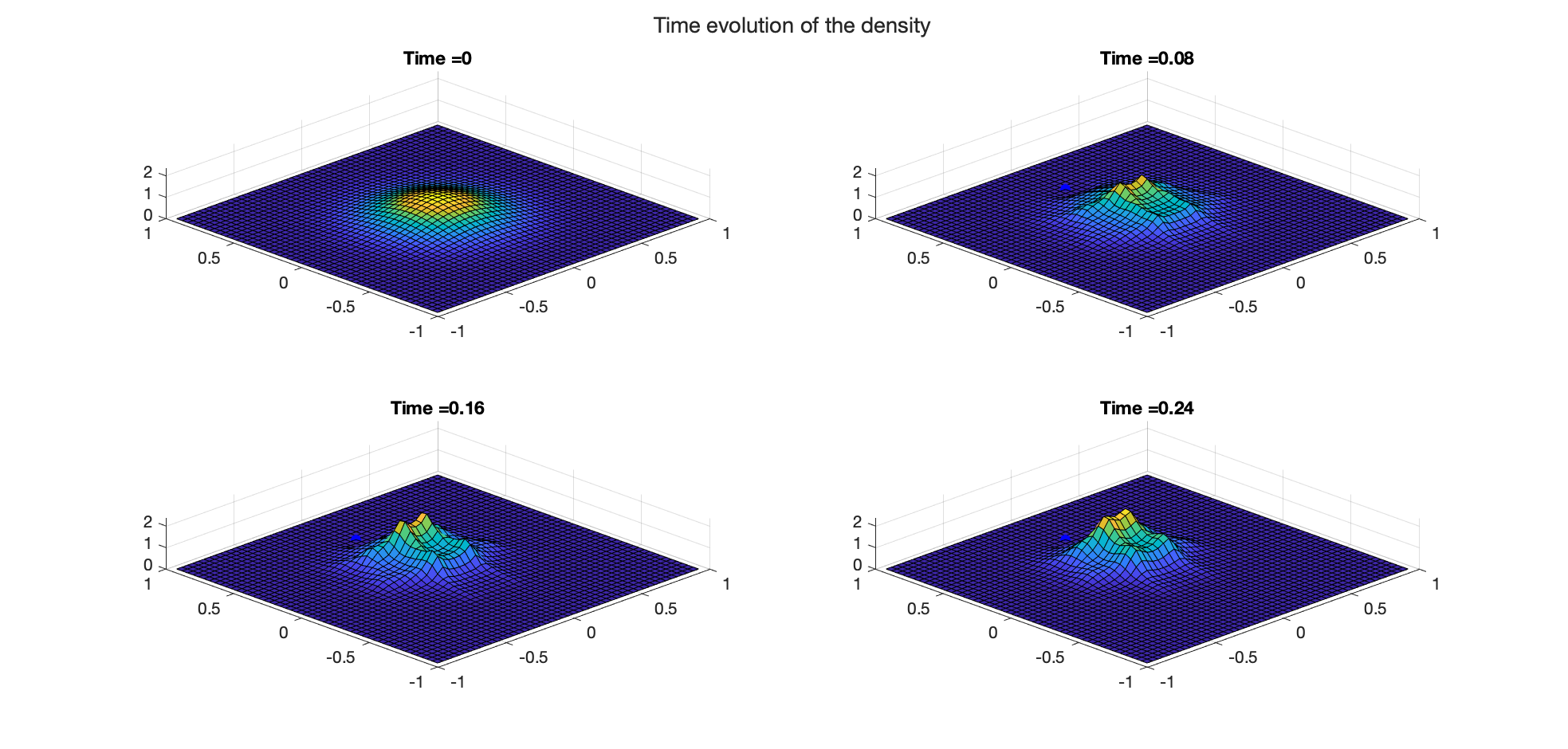}
\end{tabular}
\end{center}
\caption{Test 2. Density evolution at various instants $t=0, 0.06, 0.16, 0.24$. }\label{fig4}
\end{figure}
\par\smallskip
With this control vector field and switching map, we approximate the continuity equation \eqref{eq1} using the scheme \eqref{schemefp}. On the boundary, we use some standard homogeneous Neumann conditions as described in \cite{carlini2016PREPRINT} to preserve the total mass of agents. 
\par\smallskip
In Figure \ref{fig4}, we observe the evolution of the density. All agents directly aim to the target point in $(0,0.6)$ (in light blue) with a higher speed if more distant. This creates a specific concentration of the density in some areas of the domain close to where the switching map suggests renouncing to reach the target paying the switching cost. Clearly, inside this area, no density is present since it has already left the game. The density still present at $t=0.24$ will disappear at the next discrete time step since the switching map includes the whole domain. Therefore at the final step, $t=0.26$, the entire domain will be empty. 
\par\smallskip
We underline that in this case, since the control problem is independent of the distribution of mass, no congestion phenomena are considered, and any player acts as it is alone in the domain.

\subsection{The fully coupled case: two optimal visiting problems for a crowd of agents}

We add some complexity to our model. We repeat the previous test adding a dependency of \eqref{V} from the distribution of density $\mu(x,t)$. This makes our model a special kind of mean-field game. 

\medskip

We keep all the parameters of test 2 with the only exception of the final time $T=0.5$, $\Delta x=0.066$, $\Delta t=0.033$ and the running cost  
$$ \ell(x,a,p,t,\Upsilon)=e^{\sum\limits_{p\in \I} \Upsilon(x,t,p)}+\frac{\|a\|^2}{2}, \quad f(x,a,p)=a.$$
This choice is made to penalize the region of high density where, we suppose, the circulation is more difficult. All agents contributes to the congestion: even the ones that have switched already to another state of the system.
\par\smallskip
Due to the mutual dependency of the two equations \eqref{V}-\eqref{eq1}, we have to find a fixed point between the two equations. The existence of it has been largely discussed in various situations \cite{LasryLions06i,gomessurvey13} but never in the hybrid control framework. Some partial results are described in \cite{bertucci2018optimal} for an optimal stopping problem. To derive the fixed point numerically, the literature proposes two possible options: the use of a global descent method between the two-equation \cite{DY}, or a fixed point iteration between the two equations \cite{Carlini20154269,carlini2016PREPRINT}. We opted for the second one in this test, iterating between the two equations till a stopping criterium is verified. We use as stop criterium the distance in discrete infinite norm between two iterations of the density
$$ \cE(k)=\|\mu^z-\mu^{z-1}\|=\max_{k,i}|\mu^z(x_i, t_k)-\mu^{z-1}(x_i, t_k)|.$$
As we can see in Figure \ref{figstop}, after few iterations, the criterium $\cE(k)<\Delta x/2$ is reached.

\begin{figure}[t]
\begin{center}
\begin{tabular}{c}
\includegraphics[width=0.6 \textwidth]{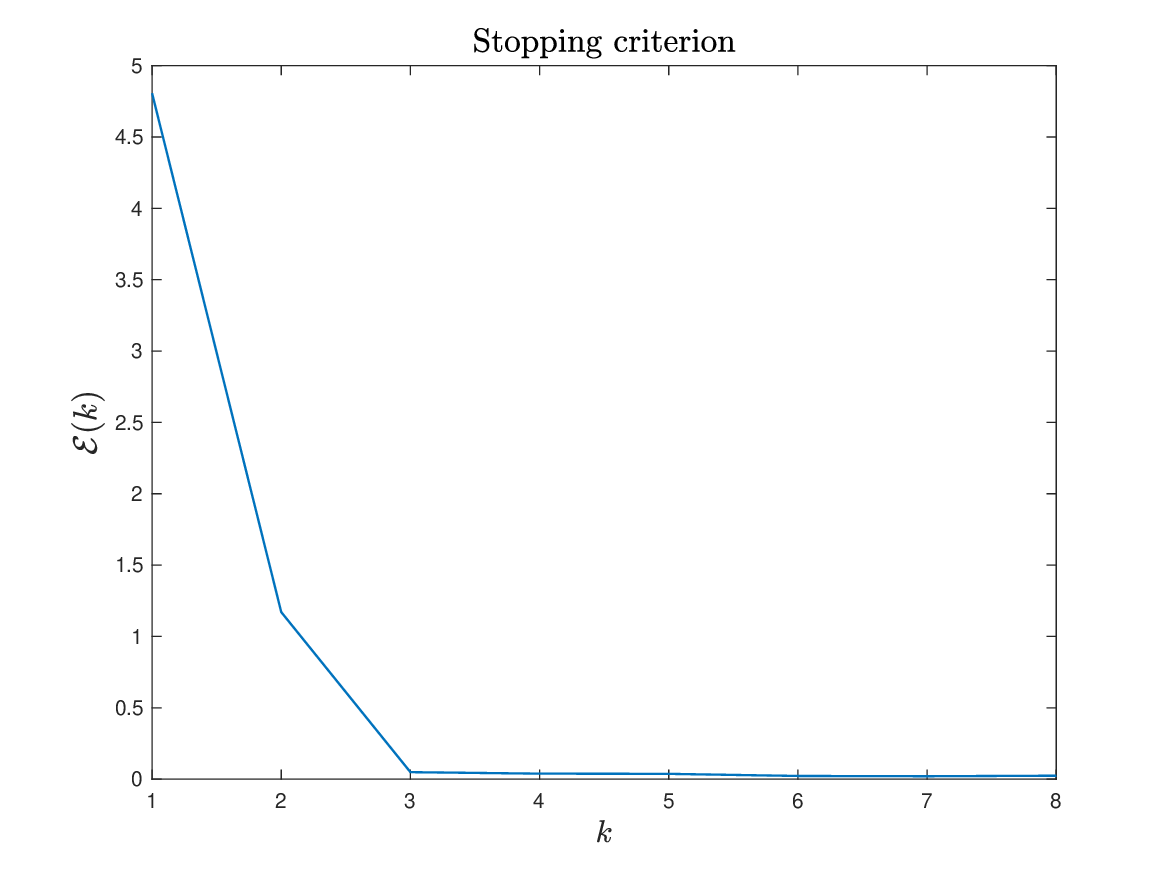}
\end{tabular}
\end{center}
\caption{Test 3. Evolution of the stopping criterium in the fixed point iteration.}\label{figstop}
\end{figure}
We show in Figure \ref{fig5} the control and switch map that we obtain. Different from Test 2, in this case, the control field is not constant but changes, trying actively to avoid the region of congestion. The latter is particularly clear at time $t=0.26$, where the agents around the center of the domain are encouraged to split laterally to avoid the congestion between them and the target. Also, the switching map is constant but moves in the congested areas, suggesting to pay the switching cost to the agents that would take too much time to circumnavigate the crowded regions. 

\begin{figure}[t]
\begin{center}
\begin{tabular}{c}
\includegraphics[width=10cm, height=4cm]{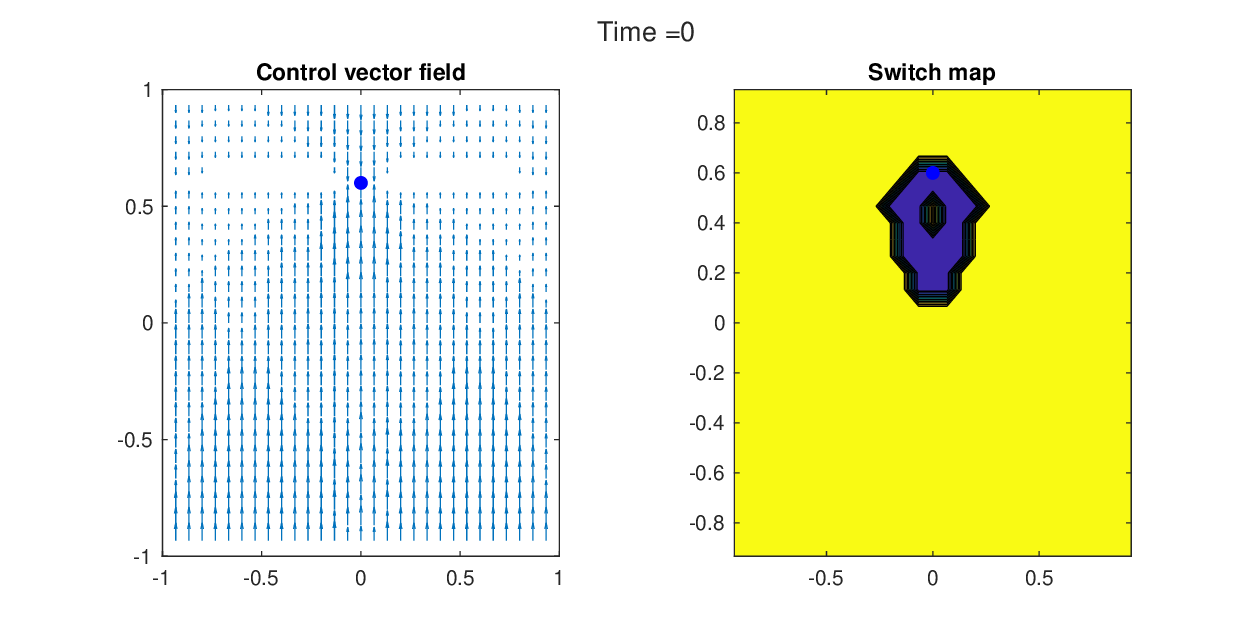}\\
\includegraphics[width=10cm, height=4cm]{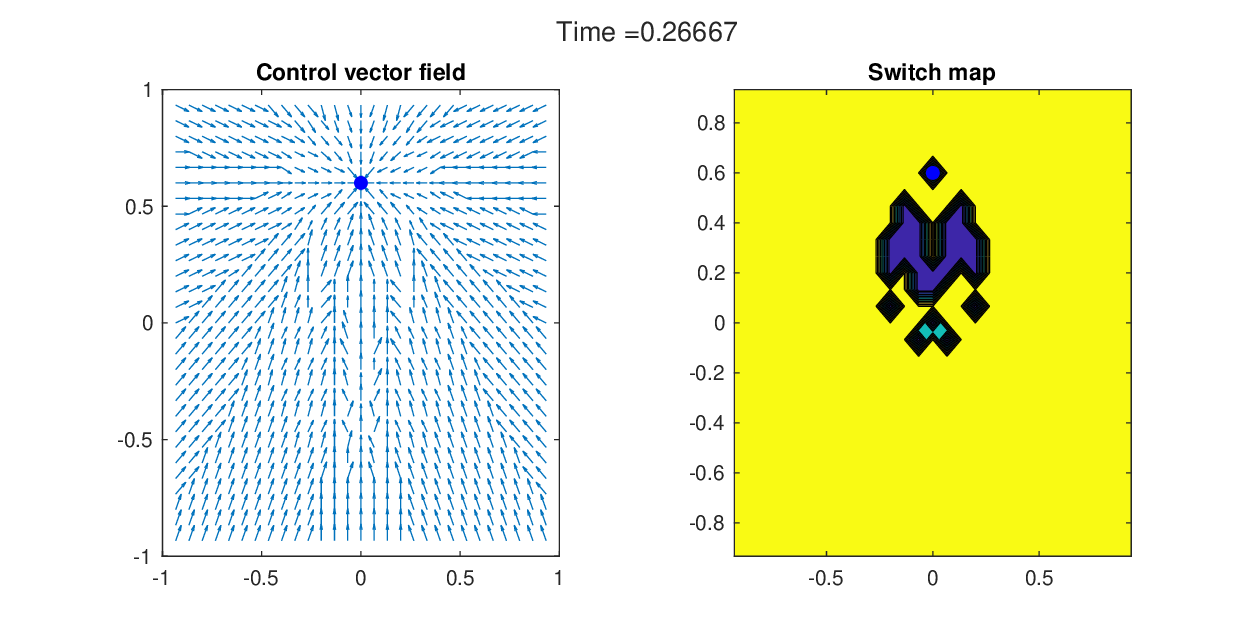}\\
\includegraphics[width=10cm, height=4cm]{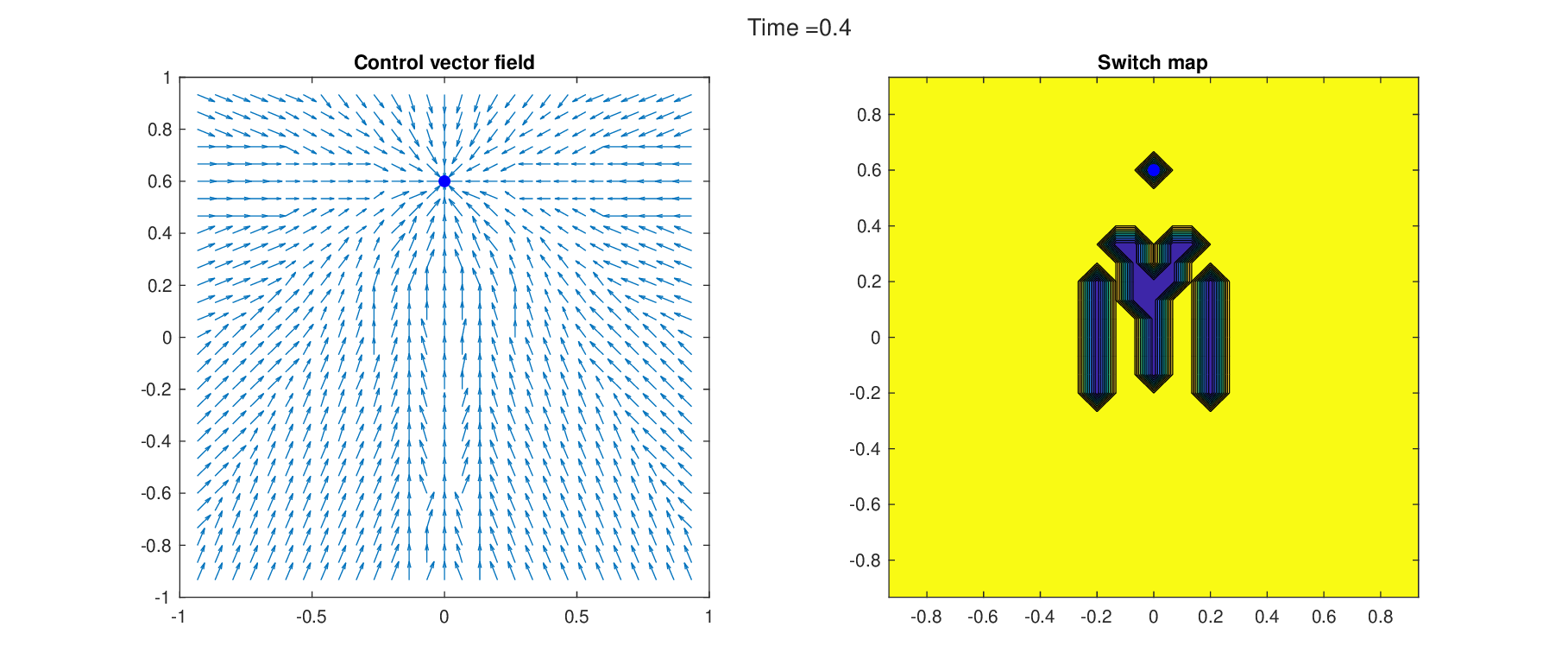}
\end{tabular}
\end{center}
\caption{Test 3. Control vector field (left) and switch map (right) of the optimal stopping problem at $t=0, 0.26, 0.4$. }\label{fig5}
\end{figure}
The same phenomenon is shown in Figure \ref{6}. In this case, we show the distribution of the density in state $(0)$ (target not visited) and $(1)$ (target visited). In the state $(1)$, as in all the final states where all targets are visited, the value function is always identically null, so it is the control map. For this reason, the density that switches to this state remains where the switch happened. Initially, at $t=0$, all the density is in state $(1)$; at $t=0.16$, only the portion of the density very close to the target does not consider it useful to get closer to the target has switched to state $(1)$. A part of the rest of the density creates a congested region in the target's direction, while the rest tries to avoid such a crowded area approaching the target laterally. This is evident at time t=0.32 where the density in $(1)$, i.e., the part of the density that has performed the switch, is all around the target point (differently from the previous Test 1). Finally, at $t=T=0.5$, all the density has switched to the final state. The state $0$ is empty, and state $(1)$ shows the location where all the switches happened. The density tends to aim to the target point spreading as much as possible to avoid congestions. A relevant part of the density switches almost immediately due to the penalization imposed in the congested areas.
\begin{figure}[t]
\begin{center}
\begin{tabular}{c}
\includegraphics[width=11cm, height=11cm]{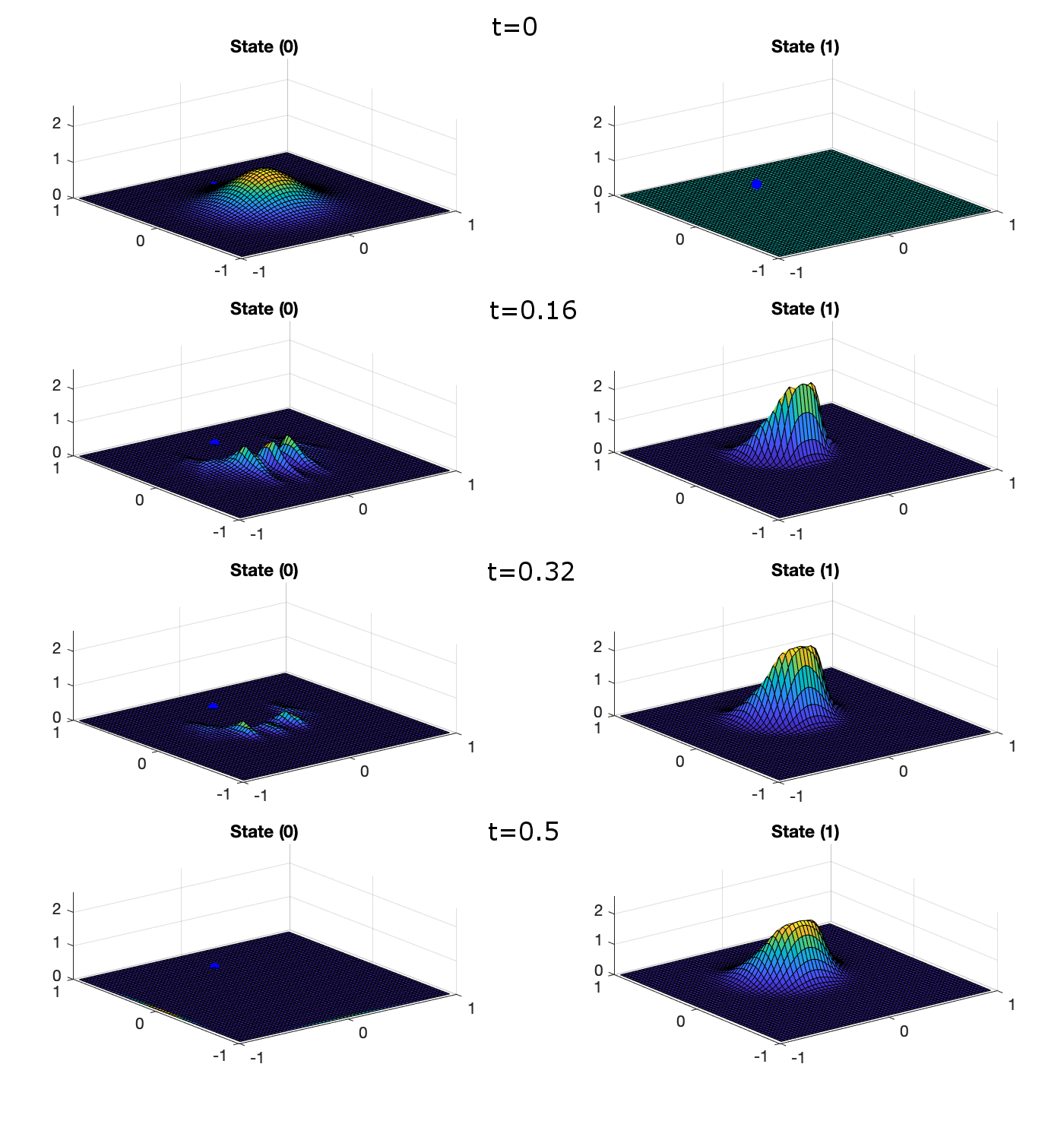}
\end{tabular}
\end{center}
\caption{Test 3. Density evolution at various instants $t=0, 0.16, 0.32, 0.5$.  }\label{6}
\end{figure}
\par\smallskip
Finally our last test is performed in the same situation of the single player Test 1. 
Therefore, the target set is composed by the points 
$$ \cT_j=0.6\,\left(\cos\left(j\frac{2\pi}{3}\right),\sin\left(j\frac{2\pi}{3}\right)\right), \quad j=1,\ldots,3.$$
The switching cost is again
$$ C(x,p,p')=\sum_{j\in\cI}\chi_j(p,p')\|x-\cT_j\|,$$
while the final time is $T=0.5$.  The discretization parameters are $\Delta x=0.06$ and $\Delta t=0.03$.
\par
Here the situation is more complex, and we limit our description to an overview of the behavior of the density reported in Figure \ref{7} and \ref{8}. We can see how the whole density, at time $t=0$, all in state $(0,0,0)$ splits in various parts aiming to the closer target (Figure \ref{7}). At $t=0.08$ some portions of the density have already switched a first time, appearing in the states $(1,0,0)$, $(0,1,0)$, $(0,0,1)$, while a part of the density located in the center of the domain (so then equidistant from the various targets) prefers to switch directly to the final state renouncing in this way to visit any of the targets. At this point, each one of these densities will aim (trying to avoid congestions) to the closest target point not visited yet. At $t=0.25$ (Figure \ref{8}) all the states (but the initial state $(0,0,0)$) contain a part of the density. This means, as we mentioned before, that for some agents, it was more convenient to renounce to visit one or even all the targets and paying the relative cost of the switch while in all the intermediate states, a portion of the density is still actively moving. Finally at time $t=0.5$ the whole density is in the final state $(1,1,1)$. As it was predictable, most of the switches happened close to the target points or in the region between them. All the intermediate states are empty.

\begin{figure}[t]
\begin{center}
\begin{tabular}{c}
\includegraphics[height=8cm]{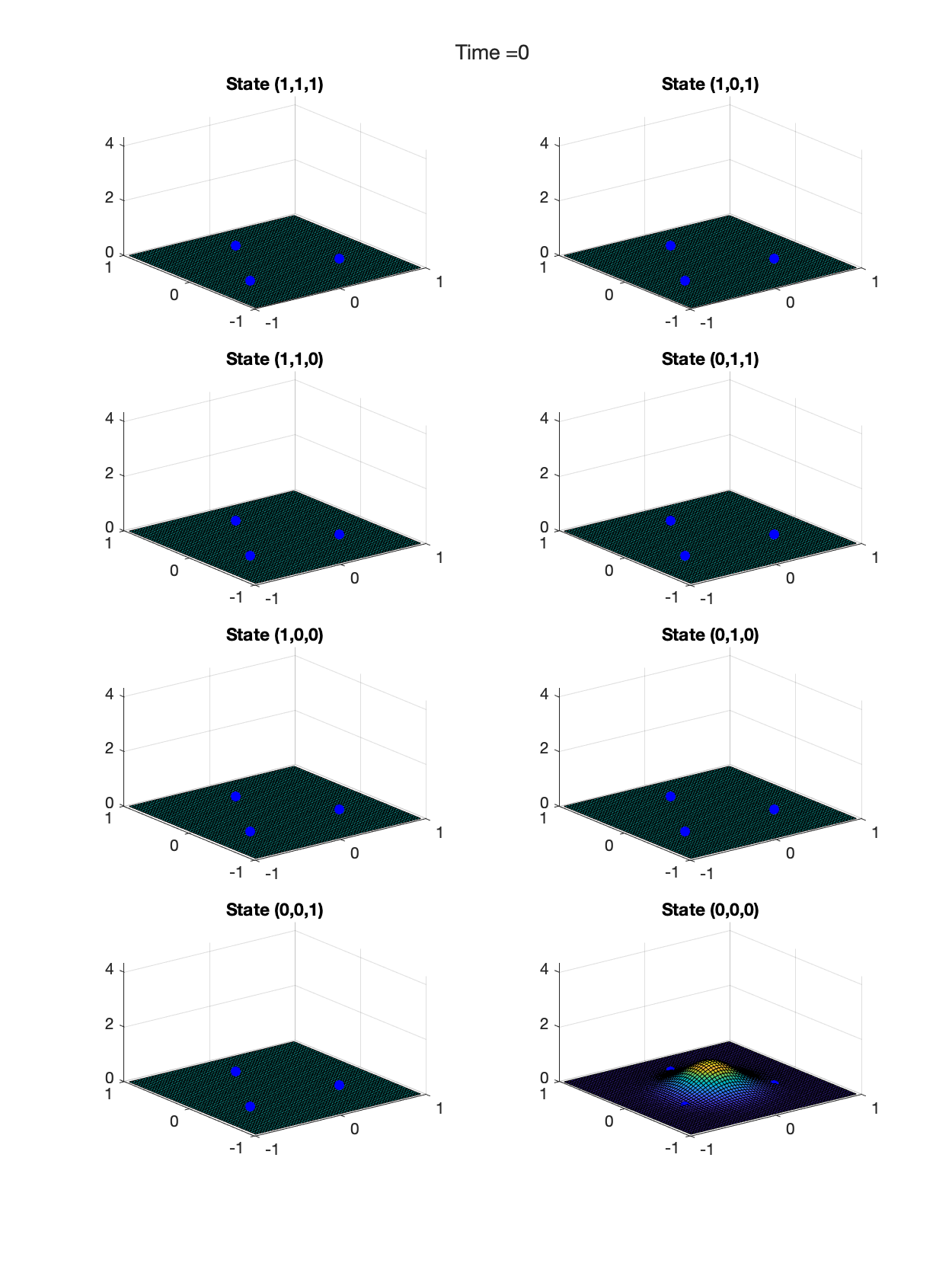}
\hspace{-0.8cm}
\includegraphics[height=8cm]{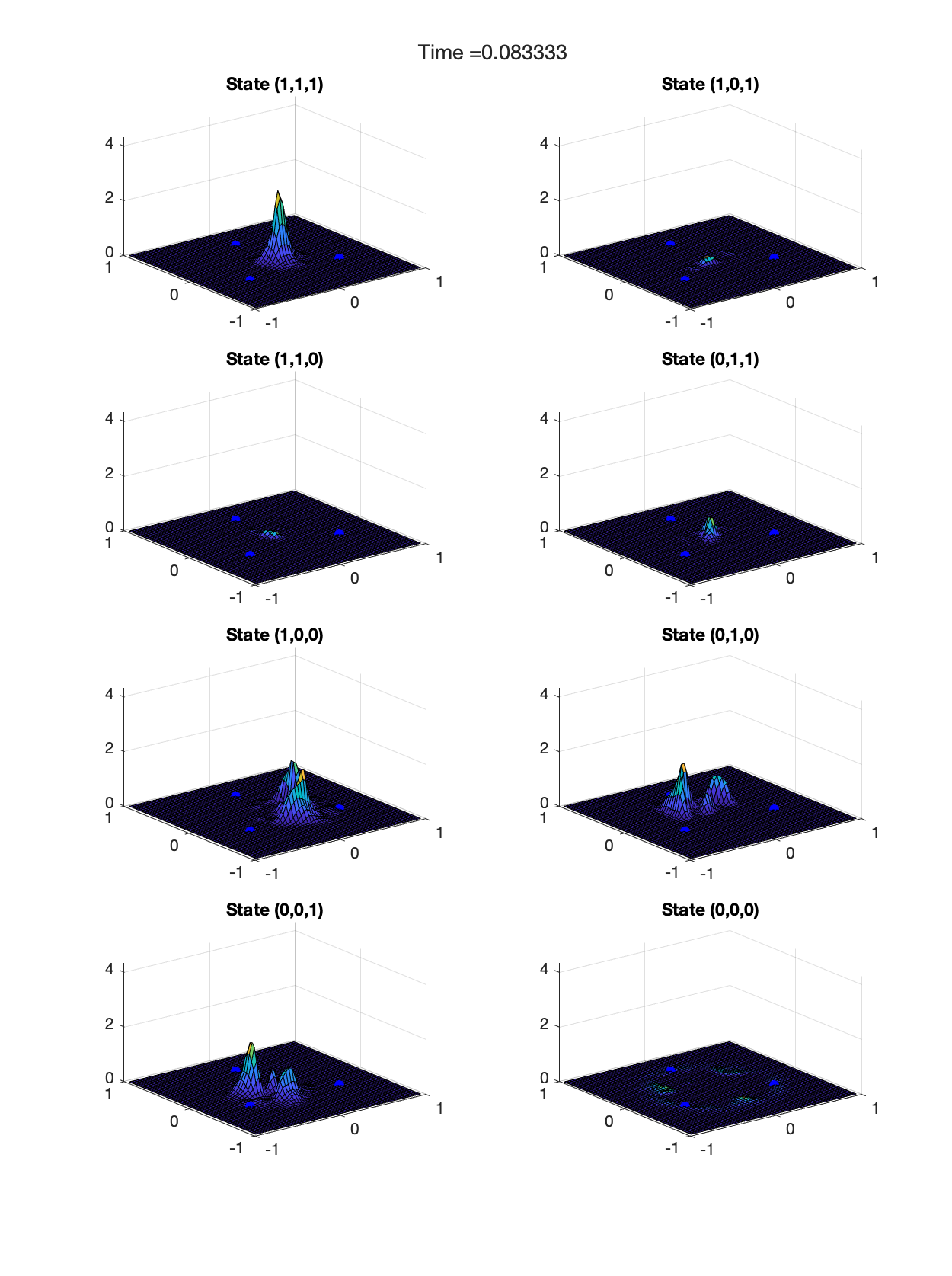}
\end{tabular}
\end{center}
\caption{Test 3. Approximated value functions in the various discrete states of the system}\label{7}
\end{figure}

\begin{figure}[t]
\begin{center}
\begin{tabular}{c}
\includegraphics[height=8cm]{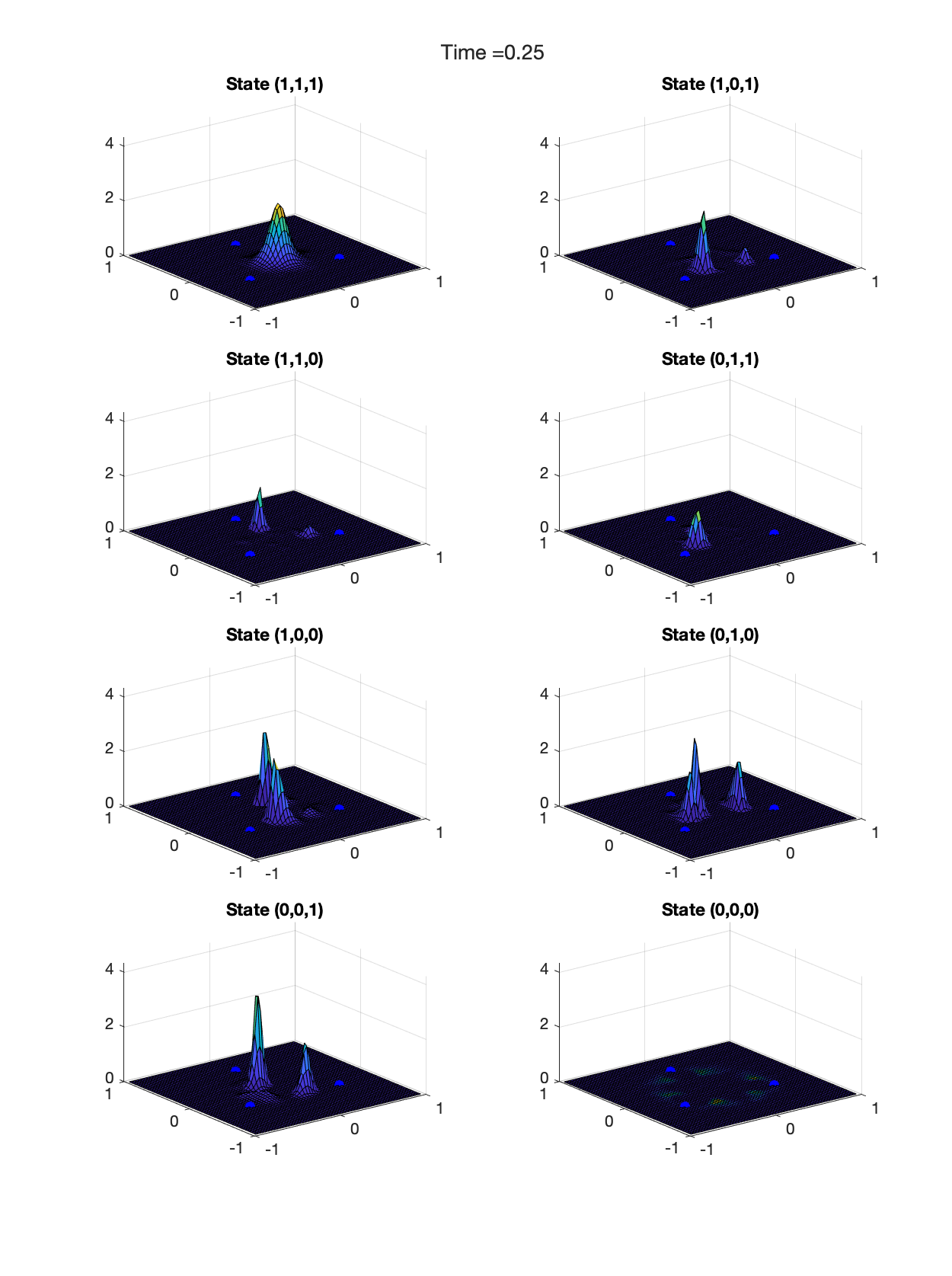}
\hspace{-0.8cm}
\includegraphics[height=8cm]{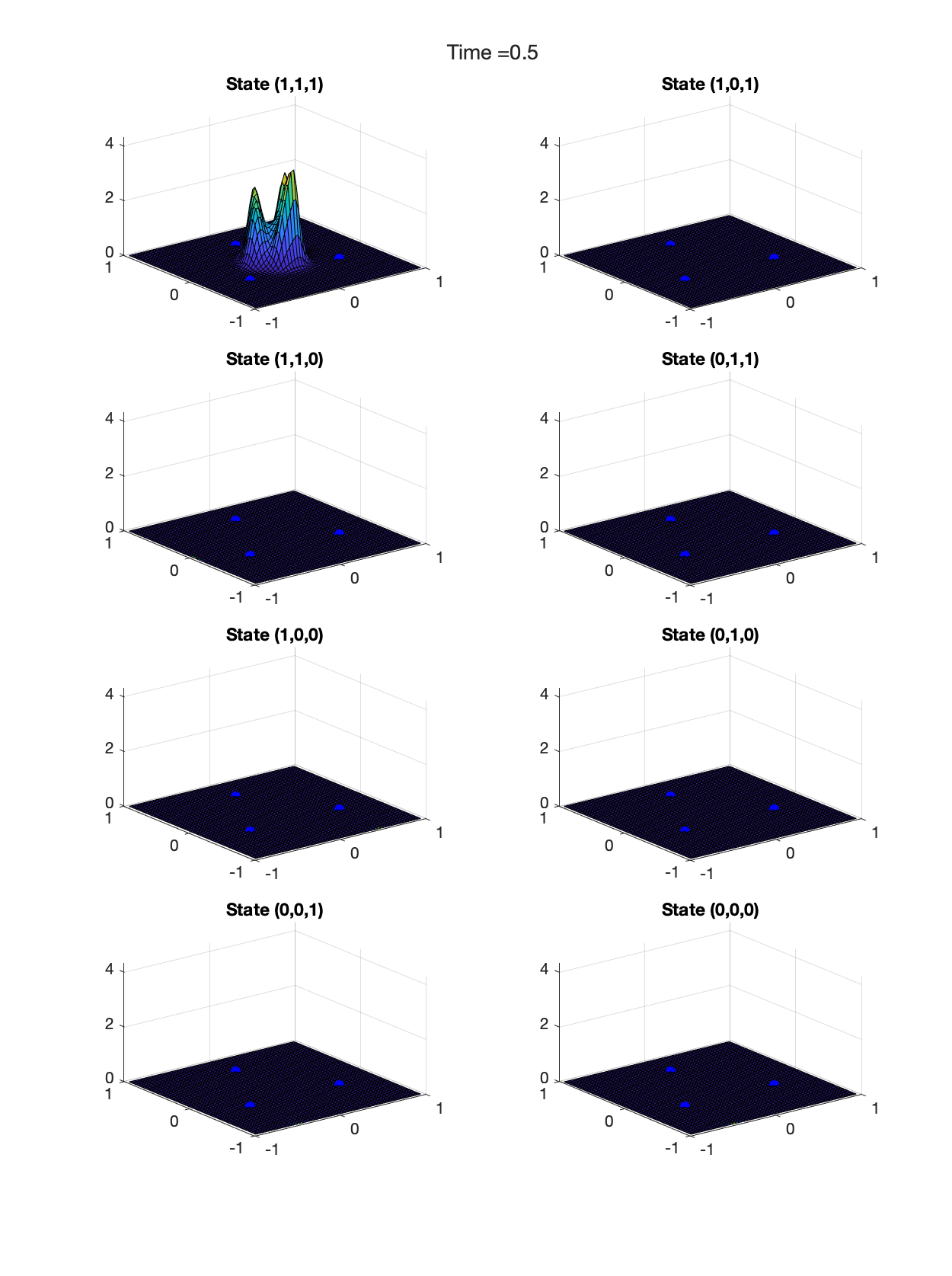}
\end{tabular}
\end{center}
\caption{Test 3. Approximated value functions in the various discrete states of the system}\label{8}
\end{figure}

\section{Conclusion}

The paper discusses both theoretical and applicative aspects of using hybrid control inside a possible macroscopic multi-agents system for an optimal visiting problem. The results that are shown here also constitute the first step toward a general theory of mean-field games in the presence of switches in the dynamics of the problem. This is an issue of considerable interest for hybrid-controlled systems but also for impulsive controlled structures. To complete the theory, various points must be fully developed: first of all, to consider, besides sinks, the presence of sources in the continuity equation. Afterwards, a major point would be the introduction of a real coupling between the continuity equation \eqref{eq1} and the HJ variational inequalities \eqref{V} via the optimal feedback (see Figure \ref{sch} too). In any case, as shown in the tests Section \S\ref{s-test2}, we may suppose, due to some promising numerical evidence, the existence of an equilibrium for such a coupling. We postpone this study to future research.

% ------------------------------------------------------------------------

\begin{thebibliography}{10}

\bibitem{CDY}
Achdou, Y., Camilli, F., Capuzzo Dolcetta I.:
\newblock Mean field games: numerical methods for the planning problem.
\newblock SIAM J. Control Optim., \textbf{50}(1) 77--109 (2012)

\bibitem{DY}
Achdou, Y., Capuzzo Dolcetta, I.:
\newblock Mean field games: numerical methods.
\newblock SIAM J. Numer. Anal., \textbf{48}(3) 1136--1162 (2010)

\bibitem{bagben}
Bagagiolo, F., Benetton, M.:
\newblock About an optimal visiting problem.
\newblock  Appl. Math. Optim., \textbf{65}(1) 31--51 (2012)

\bibitem{bagdan}
Bagagiolo, F., Danieli, K.:
\newblock Infinite horizon optimal control problems with multiple thermostatic
  hybrid dynamics.
\newblock Nonlinear Anal. Hybrid Syst., \textbf{6}(2) 824--838 (2012)

\bibitem{bagfagmagpes}
Bagagiolo, F., Faggian, S., Maggistro, R., Pesenti, R.:
\newblock Optimal control of the mean field equilibrium for a pedestrian
  tourists' flow model.
\newblock  Netw. Spat. Econ., (2019)

\bibitem{bagmagzop}
Bagagiolo, F., Maggistro, R., Zoppello, M.:
\newblock A hybrid differential game with switching thermostatic--type dynamics
  and costs.
\newblock  Minimax Theory Appl., \textbf{5}(2) 151--180 (2020)

\bibitem{BFMProc1}
Bagagiolo, F., Festa, A., Marzufero, L.:
\newblock A hybrid control framework for an optimal visiting problem.
\newblock IFAC-PapersOnLine, \textbf{54}(5) 241--246 (2021)

\bibitem{BFMProc2}
Bagagiolo, F., Festa, A., Marzufero, L.:
\newblock The orienteering problem: a hybrid control formulation.
\newblock IFAC-PapersOnLine, \textbf{54}(5) 175--180 (2021)

\bibitem{BCD97}
Bardi, M., Capuzzo Dolcetta, I.:
\newblock Optimal control and viscosity solutions of
  {H}amilton-{J}acobi-{B}ellman equations.
\newblock Birkh\"auser, 1997

\bibitem{BS91}
Barles, G.,  Souganidis, P. E.:
\newblock Convergence of approximation schemes for fully nonlinear second order
  equations.
\newblock Asymptot. Anal., \textbf{4} 271--283 (1991)

\bibitem{bellman1962dynamic}
Bellman, R.:
\newblock Dynamic programming treatment of the travelling salesman problem.
\newblock J. ACM, \textbf{9}(1) 61--63 (1962)

\bibitem{bensoussan1997hybrid}
Bensoussan, A., Menaldi, J. L.:
\newblock Hybrid control and dynamic programming.
\newblock Dynam. Contin. Discrete Impuls. Systems, \textbf{3}(4) 395--442 (1997)

\bibitem{bertucci2018optimal}
Bertucci, C.:
\newblock Optimal stopping in mean field games, an obstacle problem approach.
\newblock J. Math. Pures Appl., \textbf{120} 165--194 (2018)

\bibitem{bertucci2020}
Bertucci, C.:
\newblock Fokker-Planck equations of jumping particles and mean field games of impulse control.
\newblock Ann. Inst. H. Poincaré Anal. Non Linéaire, \textbf{37}(5) 1211--1244 (2020)

\bibitem{Branicky199831}
 Branicky, M. S., Borkar, V. S., Mitter, S. K.:
\newblock A unified framework for hybrid control: Model and optimal control theory.
\newblock IEEE Trans. Automat. Contr., \textbf{43}(1) 31--45 (1998)

\bibitem{camilli1995approximation}
Camilli, F., Falcone, M.:
\newblock An approximation scheme for the optimal control of diffusion processes.
\newblock ESAIM-Math. Model. Num., \textbf{29}(1) 97--122 (1995)

\bibitem{notecard}
Cardaliaguet, P.:
\newblock Notes on Mean Field Games.
\newblock Lecture notes (from P.-L. Lions' lectures at Collège de France), 2013

\bibitem{cardaliaguet2015mean}
Cardaliaguet, P.,  Graber, P. J.:
\newblock Mean field games systems of first order.
\newblock ESAIM Control Optim. Calc. Var., \textbf{21}(3) 690--722 (2015)

\bibitem{carlini2016PREPRINT}
Carlini, E., Festa, A., Silva, F. J., Wolfram, M.-T.:
\newblock A semi-{L}agrangian scheme for a modified version of the {H}ughes' model for pedestrian flow.
\newblock Dyn. Games Appl., \textbf{7}(4) 683--705 (2017)

\bibitem{Carlini20154269}
Carlini, E., Silva, F. J.:
\newblock A semi-{L}agrangian scheme for a degenerate second order mean field game system.
\newblock Discret. Contin. Dyn. S., \textbf{35}(9) 4269--4292 (2015)

\bibitem{Carlini2018}
Carlini, E., Silva, F. J.:
\newblock  On the Discretization of Some Nonlinear Fokker--Planck--Kolmogorov Equations and Applications.
\newblock SIAM J. Num. Analysis \textbf{56}(4) 2148--2177 (2018)


\bibitem{dharmatti2005hybrid}
Dharmatti, S., Ramaswamy, M.:
\newblock Hybrid control systems and viscosity solutions.
\newblock SIAM J. Control Optim., \textbf{44}(4) 1259--1288 (2005)

\bibitem{evansgariep}
Evans, L. C., Gariepy, R. F.:
\newblock Measure theory and fine properties of functions.
\newblock Revised edition, CRC Press, 2015

\bibitem{MR3328207}
Ferretti, R., Zidani, H.:
\newblock Monotone numerical schemes and feedback construction for hybrid control systems.
\newblock J. Optim. Theory Appl., \textbf{165}(2) 507--531 (2015)

\bibitem{Festa2017127}
Festa, A., Guglielmi, R., Hermosilla, R., Picarelli, A., Sahu, S., Sassi, A. Silva, F. J.:
\newblock Hamilton-Jacobi-Bellman equations.
\newblock Lecture Notes in Mathematics, \textbf{2180} 127--261 (2017)

\bibitem{festa2018mean}
Festa, A., G{\"o}ttlich, S.:
\newblock A mean field game approach for multi-lane traffic management.
\newblock IFAC-PapersOnLine, \textbf{51}(32) 793--798 (2018)

\bibitem{FestaTosinWolfram}
Festa, A., Tosin, A., Wolfram, M.-T.:
\newblock Kinetic description of collision avoidance in pedestrian crowds by sidestepping.
\newblock Kinet. Relat. Models, \textbf{11}(3) 491--520 (2018)

\bibitem{gavish1978travelling}
Gavish, B., Graves, S.-C.:
\newblock The travelling salesman problem and related problems.
\newblock Massachusetts Institute of Technology, Operations Research Center, 1978.

\bibitem{gomessurvey13}
Gomes, D. A., Sa\'ude, J.:
\newblock Mean {F}ield {M}odels--{A} {B}rief {S}urvey.
\newblock Dyn. Games Appl., \textbf{4}(2) 110--154 (2014)

\bibitem{Huang1}
Huang. M.:
\newblock Large-population {LQG} games involving a major player: the {N}ash certainty equivalence principle.
\newblock SIAM J. Control Optim., \textbf{48}(5) 3318--3353 (2009)

\bibitem{LasryLions06i}
Lasry, J.-M., Lions, P.-L.:
\newblock Jeux \`a champ moyen {I}. {L}e cas stationnaire.
\newblock C. R. Math. Acad. Sci. Paris, \textbf{343} 619--625 (2006)

\bibitem{ll3}
Lasry, J.-M., Lions, P.-L.:
\newblock Mean field games.
\newblock Jpn. J. Math., \textbf{2}(1) 229--260 (2007)

\bibitem{piccross1}
Piccoli, B., Rossi, F.:
\newblock Generalized {W}asserstein distance and its application to transport
  equations with source.
\newblock  Arch. Ration. Mech. Anal., \textbf{211}(1) 335--358 (2014)

\bibitem{piccross2}
Piccoli, B., Rossi, F.:
\newblock On properties of the generalized {W}asserstein distance.
\newblock Arch. Ration. Mech. Anal., \textbf{222}(3) 1339--1365 (2016)

\bibitem{Visintin20061}
Visintin, A.:
\newblock Mathematical models of hysteresis, volume~1.
\newblock Elsevier, 2006.

\end{thebibliography}
\end{document}